\documentclass[12pt,reqno]{amsart}

%\usepackage[pagewise]{lineno}
%\linenumbers

\usepackage{microtype}
\usepackage[margin=4cm]{geometry}

\usepackage[shortlabels]{enumitem}
\setlist[enumerate]{label={(\arabic*)}}

\usepackage{amssymb}
\usepackage[dvipsnames]{xcolor}
\usepackage
[colorlinks=true,linkcolor=Maroon,citecolor=OliveGreen,backref]
{hyperref}
\usepackage[abbrev, alphabetic]{amsrefs}  % bibliography
\usepackage[capitalize]{cleveref} %, autonum} \renewcommand{\qedhere}{\GenericError{}{qedhere plays up with autonum}{}{}}
\crefname{equation}{}{}
\usepackage{tikz-cd}
\usepackage{tikz}

\hypersetup{
   breaklinks=true,   % splits links across lines
   colorlinks=true,   % displays links as colored text instead of blocks
%   pdfusetitle=true,  % \title and \author values into pdf metadata
                      % etc.
}

\usepackage[norefs, nocites]{refcheck}

%%%%% Make refcheck aware of cleverref (https://tex.stackexchange.com/questions/87610/making-refcheck-work-with-cleveref)
\makeatletter
\newcommand{\refcheckize}[1]{%
  \expandafter\let\csname @@\string#1\endcsname#1%
  \expandafter\DeclareRobustCommand\csname relax\string#1\endcsname[1]{%
    \csname @@\string#1\endcsname{##1}\wrtusdrf{##1}}%
  \expandafter\let\expandafter#1\csname relax\string#1\endcsname
}
\makeatother
%%%

%%% Now we add the reference commands we want refcheck to be aware of
\refcheckize{\cref}
\refcheckize{\Cref}

 % Oxford comma

\numberwithin{equation}{section}

\newtheorem{lemma}{Lemma}[section]
\newtheorem{theorem}[lemma]{Theorem}
\newtheorem{proposition}[lemma]{Proposition}
\newtheorem{corollary}[lemma]{Corollary}

\theoremstyle{definition}
\newtheorem{remark}[lemma]{Remark}

% common notation

\newcommand\opr[1]{\operatorname{#1}}

\newcommand{\eps}{\epsilon}
\newcommand{\vareps}{\varepsilon}
\renewcommand{\subset}{\subseteq}
\renewcommand{\supset}{\supseteq}
\renewcommand{\bar}{\overline}

\def\sm{\smallsetminus}
\renewcommand{\setminus}{\sm}

% rings

\def\F{\mathbf{F}}

\def\Cl{\opr{Cl}}
\def\SL{\opr{SL}}
\def\GL{\opr{GL}}
\def\SU{\opr{SU}}
\def\Sp{\opr{Sp}}
\def\Or{\opr{O}}

\def\Aut{\opr{Aut}}
\def\Fix{\opr{Fix}}
\def\Sym{\opr{Sym}}

\newcommand\br[1]{{\left(#1\right)}}
\newcommand\floor[1]{\left\lfloor{#1}\right\rfloor}
\newcommand\ceil[1]{\left\lceil{#1}\right\rceil}
\newcommand\pfrac[2]{\br{\frac{#1}{#2}}}
\newcommand{\gen}[1]{\left\langle{#1}\right\rangle}

\def\nsgp{\trianglelefteq}

\def\tv{\mathfrak{T}}

\def\tr{\opr{tr}}
\def\im{\opr{im}}  % image
\def\ker{\opr{ker}}  % kernel

\def\Hom{\opr{Hom}}
\def\End{\opr{End}}
\def\Cay{\opr{Cay}}

\usepackage{todonotes}

\begin{document}

\title[Diameter with respect to transvections]{Diameter of classical groups generated by transvections}

\author{Sean Eberhard}
\address{Sean Eberhard, Mathematical Sciences Research Centre, Queen's University Belfast, Belfast BT7~1NN, UK}
\email{s.eberhard@qub.ac.uk}

\thanks{The author is supported by the Royal Society.}

\begin{abstract}
    Let $G$ be a finite classical group generated by transvections, i.e., one of $\SL_n(q)$, $\SU_n(q)$, $\Sp_{2n}(q)$, or $\Or^\pm_{2n}(q)~(q~\text{even})$,
    and let $X$ be a generating set for $G$ containing at least one transvection.
    Building on work of Garonzi, Halasi, and Somlai,
    we prove that the diameter of the Cayley graph $\Cay(G, X)$ is bounded by $(n \log q)^C$ for some constant $C$.
    This confirms Babai's conjecture on the diameter of finite simple groups in the case of generating sets containing a transvection.

    By combining this with a result of the author and Jezernik it follows that if $G$ is one of $\SL_n(q)$, $\SU_n(q)$, $\Sp_{2n}(q)$ and $X$ contains three random generators then with high probability the diameter $\Cay(G, X)$ is bounded by $n^{O(\log q)}$. This confirms Babai's conjecture for non-orthogonal classical simple groups over small fields and three random generators.
\end{abstract}
\maketitle

%\setcounter{tocdepth}{1}
%\tableofcontents

\section{Introduction}

This paper represents another step in the direction of the long-standing conjecture of Babai~\cite{BS92}*{Conjecture~1.7} on the diameter of finite simple groups with respect to arbitrary generating sets.
Given $g \in G = \gen X$, let $\ell_X(g)$ denote the minimal word length of $g$ with respect to $X \cup X^{-1}$, and for $S \subset G$ let $\ell_X(S) = \sup_{g \in S} \ell_X(g)$.
Babai's conjecture predicts that $\ell_X(G) \le (\log |G|)^{O(1)}$ for any (nonabelian) finite simple group $G$ and generating set $X$.
We call $\ell_X(G)$ the \emph{diameter} of $G$ with respect to $X$; it is equivalent to the diameter of the undirected Cayley graph $\Cay(G, X)$.

The past two decades have seen a slew of progress on Babai's conjecture.
In particular, following the seminal work of Helfgott~\cite{helfgott}, Breuillard--Green--Tao~\cite{BGT}, and Pyber--Szab\'o~\cite{pyber--szabo}, the conjecture is now completely resolved in bounded Lie rank.
In the case of alternating groups $A_n$, while Babai's conjecture predicts that the diameter should be bounded by a polynomial in $n$, a result of Helfgott and Seress~\cites{helfgott--seress,helfgott-unified} establishes a quasipolynomial bound.
Recently Bajpai, Dona, and Helfgott~\cite{BDH} proved a bound for classical Chevalley groups of large characteristic of the form $(\log |G|)^{O(r^5)}$, where $r$ is the rank of $G$.

Much stronger results are known about diameter with respect to typical generators (as opposed to worst-case generators, as in Babai's conjecture).
In this formulation we assume the generators are chosen uniformly at random and we are interested in results that fail with asymptotically negligible probability.
Polynomial (in $n$) bounds for the diameter of $A_n$ with respect to typical generators were established in \cites{babai--hayes, schlage-puchta, HSZ}.
More recently, it was proved in \cite{EJ} that if $G = \Cl_n(q)$ is a classical group with defining module $\F_q^n$ (so $\log |G| \approx c n^2 \log q$) then $\ell_X(G) \le q^2 n^{O(1)}$ with high probability provided that $X$ includes at least $q^C$ random generators.

Pyber has proposed%
\footnote{Personal communication. It was also previously emphasized in the introductions to \cite{H} and \cite{EJ}. The programme was inspired by \cite{BBS}.}
that Babai's conjecture should be broken down into three parts. Since the conjecture is resolved in bounded rank, we may assume that $G = A_n$ or $G = \Cl_n(q)$, $q = p^f$, that is, one of the groups $\SL_n(q)$, $\SU_n(\sqrt q)$, $\Sp_n(q)$, $\Omega_n^\vareps(q)$.
The \emph{degree} of an element $g \in G$ is the size of its support if $G = A_n$ or the rank of $g - 1$ if $G = \Cl_n(q)$.
Now let $X$ be an arbitrary generating set for $G$.
Pyber's programme consists of the following subproblems:
\begin{enumerate}
    \item Find a nontrivial element $g \in G$ of polylogarithmic length over $X$ whose degree is at most $(1-\eps)n$
    for some constant $\epsilon > 0$.
    \item Assuming $X$ includes an element of degree at most $(1-\epsilon)n$ as above, find a nontrivial element $g \in G$ of polylogarithmic length over $X$ whose degree is minimal in $G$.
    \item Assuming $X$ includes an element of minimal degree, show that
    every element of $G$ has polylogarithmic length over $X$.
\end{enumerate}
Part (1) is likely the most difficult.
Part (2) has been solved for $G = A_n$ for $\eps = 0.67$ in \cite{BBS} and more recently for $\eps = 0.37$ in~\cite{BGHHSS}
(doing the same for an arbitrary constant $\eps>0$ is a fascinating open problem).
Part (3) is actually trivial for $G = A_n$, since the number of $3$-cycles in $A_n$ is less than $n^3$, but for classical groups it is not at all trivial and in fact the focus of this paper.

Excluding the case of $\Omega_n^\eps(q)$ (which we leave to future work), the elements of minimal degree in a classical group $G = \Cl_n(q)$ are the transvections.
By recent work of Halasi and Garonzi--Halasi--Somlai~\cites{H, GHS} we have a suitable bound for $\ell_X(G)$ for $\SL_n(q)$, $\SU_n(\sqrt q)$, $\Sp_n(q)$ and any generating set $X$ containing a transvection, excluding all cases of characteristic $2$ and a few cases of characteristic $3$.

\begin{theorem}[Garonzi--Halasi--Somlai, \cite{GHS}*{Theorem~1.5}]
%    \label{thm:GHS}
    Let $G$ be one of the non-orthogonal classical groups $\SL_n(q)$, $\SU_n(\sqrt q)$ ($q$ square), $\Sp_n(q)$ ($n$ even) with defining module $\F_q^n$. Let $X$ be a generating set for $G$ containing a transvection.
    Then
    \[
        \ell_X(G) \le (n \log q)^{O(1)}
    \]
    provided that
    \begin{itemize}
        \item $q$ is odd,
        \item $q \ne 9, 81$ if $G = \SL_n(q)$,
        \item $q \ne 81$ if $G = \SU_n(\sqrt q)$,
        \item $q \ne 9$ if $G = \Sp_n(q)$.
    \end{itemize}
\end{theorem}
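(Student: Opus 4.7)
The plan is to start from the single transvection $t \in X$ and, by bounded-length conjugation and multiplication, produce enough transvections to write every element of $G$ as a short product. Recall that a transvection in $\Cl_n(q)$ has the form $\tau_{v,\phi}(x) = x + \phi(x)v$ with $\phi(v) = 0$, and that conjugation by $g \in G$ sends $\tau_{v,\phi}$ to $\tau_{gv,\,\phi \circ g^{-1}}$; in particular, conjugation by short words in $X$ acts on transvections via the natural action of $G$ on vectors and covectors. Moreover, every element of $G$ is a product of $O(n)$ transvections. Thus it suffices to express every transvection as an $X$-word of length $(n\log q)^{O(1)}$.

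First I would use the irreducibility of the $\langle X \rangle$-action on the defining module to spread $t$ into a large, controlled family of transvections. Since $\langle X \rangle = G$ acts transitively on the appropriate set of pairs $(v,\phi)$, suitable words of length $O(n)$ in $X$ carry $t$ to transvections $\tau_{v',\phi'}$ with $v'$ ranging over a basis of $\F_q^n$. Then, given two transvections $t_1 = \tau_{v_1,\phi_1}$ and $t_2 = \tau_{v_2,\phi_2}$ whose axes span a $2$-dimensional subspace $U$, the subgroup they generate acts as a copy of $\SL_2(q)$ on $U$ and trivially on a complement. The commutator $[t_1,t_2]$ is itself a low-rank element on $U$, and by playing with Steinberg-type relations inside this $\SL_2$ one realises the full root subgroup $\{\tau_{v_1,c\phi_1}:c \in \F_q\}$ as a short product. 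For $q$ odd with $q \notin \{9,81\}$, the diameter of $\SL_2(q)$ (and its unitary/symplectic analogues) with respect to any generating set containing a root element is $(\log q)^{O(1)}$ by Helfgott's theorem and its successors; it is exactly this bounded-rank input that forces the arithmetic restrictions on $q$ in the statement.

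Having produced a single root subgroup of transvections in words of length $(n\log q)^{O(1)}$, I would iterate: further conjugation by short $X$-words produces a rich collection of root subgroups, and finitely many root subgroups of transvections generate the full transvection set $T \subset G$. Since every $g \in G$ is a product of $O(n)$ transvections, each now accessible in $(n\log q)^{O(1)}$ letters, the desired bound $\ell_X(G) \le (n\log q)^{O(1)}$ follows. Equivalently, the last step can be organised as an induction on $n$: the transvections constructed in short words generate a subgroup containing a copy of $\Cl_{n-1}(q)$ acting on a hyperplane, and the inductive hypothesis finishes the job.

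The main obstacle is the $\SL_2$-step: the whole scheme hinges on two well-chosen transvections generating a small-rank classical group with polylogarithmic diameter, with enough uniformity to feed into the higher-rank iteration. In even characteristic, transvections are involutions, the commutator $[t_1,t_2]$ can lie in the \emph{same} root subgroup rather than provide something transverse, and squaring a transvection yields the identity, so the $\SL_2$-in-a-plane mechanism degenerates. This is precisely why the theorem as stated excludes even $q$ and a handful of small odd $q$, and it is the core obstruction that the present paper must bypass to extend the result to characteristic $2$.
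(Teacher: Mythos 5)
This statement is cited from Garonzi--Halasi--Somlai and is not reproved in the present paper, so there is no in-paper proof to compare against verbatim; but your sketch can be measured against the method GHS use and the one this paper develops for its extension.

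There are two concrete gaps. First, the claim that ``suitable words of length $O(n)$ in $X$ carry $t$ to transvections $\tau_{v',\phi'}$ with $v'$ ranging over a basis'' is unjustified: $X$ is an arbitrary generating set and there is no a priori control on the orbit of $t$ under short words. The reduction actually used (Section 6 here, and likewise in GHS) is to form $T_m = \{t^{x_1\cdots x_m}:x_i\in X\}$ and observe $\gen{T_m}=G$ for some $m\le\log_2|G| = O(n^2\log q)$; all subsequent work is done with a generating set $T$ consisting entirely of transvections, with no assumption that they align with a nice basis. Second, and more seriously, your assertion that two transvections supported on a plane ``generate a copy of $\SL_2(q)$'' is false in general: the subgroup they generate may be $\SL_2(q')$ over a proper subfield, dihedral, or one of the exceptional irreducible transvection-generated subgroups such as $\SL_2(5)<\SL_2(9)$ or $3\cdot A_6<\SL_3(4)$. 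Controlling exactly which subgroup a bounded set of transvections generates is the role of the transvection-graph invariants (cycle weights determining the defining field, symplectic/unitary cycles detecting invariant forms) together with the classification of irreducible subgroups generated by transvections --- Dickson in rank one for GHS, Kantor's theorem in general here. This is also what actually forces the excluded values of $q$: they correspond to the exceptional cases of that classification, \emph{not} to any limitation in Helfgott's theorem, which has no arithmetic restriction on $q$.

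Your final ``iterate'' step is the crux of the argument and is left entirely qualitative. ``Finitely many root subgroups generate the full transvection set'' is true but gives no length bound; one needs an explicit support-halving induction (here: fix a transvective basis, show that $T^{(2k)}\subset(T^{(k+2)})^{O(1)}$ via decomposing a support-$2k$ transvective vector into bounded sums of support-$\le k+2$ ones, so that all of $G\cap\tv$ is reached in $n^{O(1)}$ steps), and each halving step itself requires the certified bounded-rank subgroup to apply Helfgott's bound. Your observation about even characteristic is correct in spirit --- transvections are involutions there --- but the real obstruction is the appearance of genuinely new irreducible types (symmetric, monomial, orthogonal) in Kantor's list, which is exactly what the present paper must certify against to remove the parity hypothesis.
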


In this paper we deal with the omissions above.
It is also natural to include the (imperfect) orthogonal groups in characteristic $2$.
Our main theorem is the following.

\begin{theorem}
    \label{thm:main}
    Let $G \le \SL_n(q)$ be an irreducible classical group generated by transvections, i.e., one of the following groups:
    \begin{align*}
        &\SL_n(q) && (n \ge 2),\\
        &\SU_n(\sqrt q) && (n \ge 3, q = q_0^2, (n,q_0) \ne (3,2)),\\
        &\Sp_n(q) && (n = 2m \ge 4),\\
        &\Or_n^\pm(q) && (n = 2m \ge 6, p = 2).
    \end{align*}
    Let $X$ be a generating set for $G$ containing a transvection.
    Then
    \[
        \ell_X(G) \le (n \log q)^{O(1)}.
    \]
\end{theorem}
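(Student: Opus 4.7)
\medskip\noindent\emph{Proof proposal.}
The plan is to extend the framework of Garonzi--Halasi--Somlai (GHS) to cover the cases it omits, namely the whole of characteristic~$2$ (including the orthogonal groups $\Or_n^\pm(q)$), together with the exceptional odd cases $\SL_n(9), \SL_n(81), \SU_n(81), \Sp_n(9)$. Fix a transvection $t \in X$, with centre $v$ and axis $H = \ker\phi$, so $t \colon x \mapsto x + \phi(x)\, v$. Following GHS, the strategy is to produce in polylogarithmic length a full set of standard root elements of $G$ relative to its natural $BN$-pair, after which a short presentation in those generators finishes the job. Short conjugates $g t g^{-1}$ of $t$ by words $g$ of polylogarithmic length yield a rich pool of transvections in diverse directions, and commutators of pairs whose axes meet in codimension~$1$ yield root elements in other root subgroups, driving the reduction.

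Second, I would organise these constructions into an induction on the rank~$n$. The key lemma to establish is that in polylogarithmic length one can realise the classical group of the same type but smaller rank acting on a non-degenerate codimension-$O(1)$ subspace of the defining module. Induction then yields every element of this subgroup in polylog length, and a short transitivity argument on transvections decomposes an arbitrary element of~$G$ into a polylog-bounded product of such subgroup elements. For the three exceptional odd cases, the obstruction in GHS arises from degeneracies inside $\SL_2(q)$; I would replace the offending step with a direct construction of an $\SL_2(q)$-block on a distinguished plane, produced from two carefully chosen transvections sharing that plane by explicit short words tailored to $q \in \{9, 81\}$.

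The orthogonal case in characteristic~$2$ is the expected main obstacle. Here the transvections are Siegel transformations attached to non-singular vectors $v$ (those with $Q(v) \ne 0$), and several identities underlying GHS lose a factor of~$2$. I would work inside the ambient symplectic group via the inclusion $\Or_n^\pm(q) \le \Sp_n(q)$: orthogonal transvections form a distinguished subset of symplectic transvections cut out by $Q(v) \ne 0$, so the commutator calculus remains symplectic, while the type~$\pm$ only influences which non-degenerate codimension-$2$ subspaces host the inductive subgroup. Low-rank base cases and exceptional isomorphisms would be resolved directly.
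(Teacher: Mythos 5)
Your proposal is essentially a plan to patch the Garonzi--Halasi--Somlai argument case by case, but it misses the central new idea of the paper and consequently has a genuine gap. The paper explicitly disclaims the ``review GHS and deal with technicalities'' route, for the following reason: in characteristic~$2$ there is a much richer zoo of irreducible subgroups of $\SL_n(q)$ generated by transvections (monomial groups $C_a^{n-1}\rtimes S_n$, symmetric groups $S_{n+1}$, $S_{n+2}$, subfield subgroups, and the orthogonal groups themselves, per Kantor's classification), and these arise as obstructions. Your inductive step --- take $O(1)$ well-chosen transvections, look at the classical subgroup of the same type and smaller rank they generate on a nondegenerate codimension-$O(1)$ subspace --- silently assumes that a bounded set of transvections in $G$ generates a classical group of \emph{the same type and defining field} as $G$, and this is simply false without further work. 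A handful of transvections in $\Sp_n(2)$ might generate $S_{k+1}$ acting on a $k$-dimensional subspace, or transvections in $\SL_n(q)$ might generate a monomial or subfield subgroup. The paper's solution is the ``certification'' machinery: using Kantor's classification theorem, one first produces in length $(n\log q)^{O(1)}$ a bounded-size certificate $T_0$ such that \emph{every} irreducible transvection-generated overgroup of $T_0$ is forced to have the right type and defining field (\Cref{thm:certification}). Only after this is in hand can one safely run the support-doubling/bootstrap argument of \Cref{lem:transvective-2} and the final proposition, combined with \Cref{main-thm-bounded-rank} in bounded rank and Liebeck--Shalev for the last mile.

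Two further remarks. First, the paper does not build standard root elements and appeal to a short presentation, so that portion of your plan is a different (and unexecuted) route; the paper instead reduces to covering $G\cap\tv$ and invokes Liebeck--Shalev. Second, there is nothing special the paper does for the odd exceptional cases $q\in\{9,81\}$: the certification argument is uniform across characteristics, and the paper notes that restricting to $p\in\{2,3\}$ would not simplify anything. Your plan to ``tailor short words to $q\in\{9,81\}$'' is therefore solving a problem the certification framework dissolves automatically. Your observation that orthogonal groups in characteristic $2$ should be handled via the inclusion $\Or_n^\pm(q)\le\Sp_n(q)$ is correct and does match the paper's strategy (see \Cref{prop:orthogonal,prop:orthogonal-certificate}), but the hard part there is certifying that a given transvection set preserves a quadratic form, which requires the nontrivial relation-splitting argument of \Cref{prop:orthogonal-certificate}, not just commutator calculus.
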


It follows that Babai's conjecture \cite{BS92}*{Conjecture~1.7} holds for the simple groups $\opr{PSL}_n(q)$, $\opr{PSU}_n(\sqrt q)$, and $\opr{PSp}_n(q)$, as well as the index-2 overgroup $\opr{PO}_n^\pm(q)$ of the simple group $\opr{P\Omega}_n^\pm(q)$ for $q$ even, for generating sets containing the image of a transvection.
By combining with \cite{EJ}*{Theorem~1.1}, we also get the following corollary, which confirms Babai's conjecture for all non-orthogonal classical groups over small fields with respect to three random generators.
(As above, this is new only for $p=2$ and a few $p=3$ cases.)

\begin{corollary}
    Let $G$ be in one of the following intervals:
    \begin{align*}
        &\SL_n(q) \le G \le \GL_n(q),\\
        &\SU_n(\sqrt q) \le G \le \opr{GU}_n(\sqrt q),\\
        &G = \Sp_{2m}(q).
    \end{align*}
    Assume $\log q < c n / \log^2 n$ for a sufficiently small constant $c > 0$.
    Let $X = \{x,y,z\}$ where $x, y, z \in G$ are uniformly random.
    Then with probability $1 - e^{-cn}$ we have
    \[
        \ell_X(G) \le n^{O(\log q)}.
    \]
\end{corollary}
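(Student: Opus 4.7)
My strategy is to use the random generators $x,y,z$ to manufacture a transvection $\tau \in G$ of short word length, and then appeal to \Cref{thm:main}. Once a short $\tau$ is found, applying \Cref{thm:main} to the enlarged generating set $\{x,y,z,\tau\}$ gives diameter at most $(n\log q)^{O(1)}$ with respect to this set, and substituting for $\tau$ multiplies lengths by $\ell_{\{x,y,z\}}(\tau)$. So it suffices to prove $\ell_{\{x,y,z\}}(\tau) \le q^{O(1)} n^{O(1)} (\log q)^{O(1)}$ with probability $1 - e^{-cn}$.

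To construct $\tau$, I would amplify the three random generators into a pseudo-random set of size $N = q^C$. With probability $1-e^{-\Omega(n)}$ the triple $(x,y,z)$ generates $G$. From $(x,y,z)$ one can then produce words $w_1, \ldots, w_N$, each of length $L = n^{O(1)} \log q$ in $\{x,y,z\}$, whose joint distribution is close enough to uniform to satisfy the hypotheses of \cite{EJ}*{Theorem~1.1} (for instance, outputs of $N$ independent length-$L$ random walks on $\Cay(G, \{x, y, z\})$, or of the product-replacement algorithm). That theorem then produces a transvection $\tau$ of length $\le q^2 n^{O(1)}$ in the $w_i$, hence of length at most $L \cdot q^2 n^{O(1)} = q^{O(1)} n^{O(1)} (\log q)^{O(1)}$ in $\{x, y, z\}$. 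Combining the two bounds,
\[
    \ell_{\{x,y,z\}}(G) \le (n\log q)^{O(1)} \cdot q^{O(1)} n^{O(1)} (\log q)^{O(1)} = q^{O(1)} n^{O(1)} (\log q)^{O(1)},
\]
which under the hypothesis $\log q < cn/\log^2 n$ simplifies to $e^{O(\log n \log q)} = n^{O(\log q)}$.

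The main obstacle is the amplification step. Justifying that pseudo-random words in only three generators can substitute for the $q^C$ genuinely random generators assumed in \cite{EJ}*{Theorem~1.1} is nontrivial, since a uniform spectral gap for $3$-generator Cayley graphs of classical groups of unbounded rank is a famously open problem. The cleanest resolution is likely to revisit the EJ proof and verify that it only uses character-sum or second-moment estimates preserved under sufficiently mixing random walks, or alternatively to invoke the rapid mixing properties of the product-replacement algorithm on classical groups of large rank. A minor secondary point is extending the conclusion from the commutator subgroups $\SL_n(q)$, $\SU_n(\sqrt q)$ to the full intervals up to $\GL_n(q)$, $\opr{GU}_n(\sqrt q)$ allowed in the corollary; this follows because $G/[G,G]$ is cyclic and is generated by $(x,y,z)$ together with the commutator subgroup with probability $1 - e^{-\Omega(n)}$.
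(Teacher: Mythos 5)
Your high-level plan---manufacture a short transvection from the three random generators, then apply \Cref{thm:main}---matches the paper's intent, and the secondary point about lifting from the derived subgroup to the full interval is fine. But the amplification step is both unnecessary and a genuine gap. You appear to have conflated two results of \cite{EJ}: the bound $\ell_X(G) \le q^2 n^{O(1)}$ with at least $q^C$ random generators is the result paraphrased in this paper's introduction, whereas \cite{EJ}*{Theorem~1.1}---the one actually invoked---is stated for a bounded number of uniformly random generators under the hypothesis $\log q < cn/\log^2 n$, and is framed precisely so that it combines with a worst-case transvection diameter bound such as \Cref{thm:main} to yield the corollary. This is why the paper gives no further argument: the hypothesis $\log q < cn/\log^2 n$, the failure probability $e^{-cn}$, and the exponent $O(\log q)$ are all inherited from \cite{EJ}*{Theorem~1.1}, with \Cref{thm:main} contributing only an extra factor $(n\log q)^{O(1)} \le n^{O(\log q)}$.

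The step you flag as ``the main obstacle'' would indeed sink the argument if it were needed: no spectral gap is known for three-generator Cayley graphs of classical groups of unbounded rank, so short random walks cannot be certified to mix; the product-replacement algorithm has no mixing guarantees of the required strength here either; and the $q^C$-generator result of \cite{EJ} needs genuinely independent uniform elements, not merely a collection whose marginals are approximately uniform. None of that can be assumed. As written the proposal does not constitute a proof, although the strategic outline is right; the remedy is to read and cite \cite{EJ}*{Theorem~1.1} directly rather than attempting to reconstruct it from the $q^C$-generator result.
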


The proof of \Cref{thm:main} does not consist merely in reviewing \cite{GHS} and dealing with technicalities. There is a much greater variety of irreducible subgroups generated by transvections in characteristic $2$ than in odd characteristic, and these subgroups arise as obstructions in the proof.
While Garonzi, Halasi, and Somlai relied on more elementary results of Dickson and Humphries, our main tool is the following deep theorem of Kantor, building on previous work of Wagner, Piper, and Pollatsek.
The theorem classifies irreducible subgroups of $\SL_n(q)$ generated by transvections (and thus in particular gives additional context to \Cref{thm:main}).

\begin{theorem}[Kantor]
    \label{thm:classification}
    Let $n \ge 2$, $K = \F_q$, $q = p^f$, and let $G \le \SL_n(q)$ be an irreducible subgroup generated by transvections,
    not contained in a subfield subgroup.
    Then $G$ is one of the following up to conjugacy:
    \begin{enumerate}
        \item $\SL_n(q)$,
        \item $\SU_n(\sqrt q)$ where $q$ is a square,
        \item $\Sp_n(q)$ where $n$ is even,
        \item $\Or^\pm_n(q)$ where $n$ and $q$ are even,
        \item $C_a^{n-1} \rtimes S_n$ where $a > 1$ is odd and $q$ is the smallest power of $2$ such that $a \mid q-1$,
        \item $S_{n+1}$ or $S_{n+2}$ where $n$ is even and $q = 2$,
        \item $\SL_2(5) < \SL_2(9)$,
        \item $3 \cdot A_6 < \SL_3(4)$,
        \item $3 \cdot \opr{P\Omega}^{-,\pi}_6(3) < \SL_6(4)$.
    \end{enumerate}
\end{theorem}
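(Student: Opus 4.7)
The plan is to deduce the classification from the geometry of the set $T$ of transvections of $G$ acting on the natural module $V = K^n$. The first step is a rank-$2$ analysis: for $t, t' \in T$ with centres $L_t, L_{t'}$ (the images of $t-1, t'-1$) and axes $H_t, H_{t'}$ (their kernels), one has $[t, t'] = 1$ precisely when $L_t \subseteq H_{t'}$ and $L_{t'} \subseteq H_t$; otherwise $\langle t, t' \rangle$ acts faithfully on the at-most-$4$-dimensional section supported on $L_t + L_{t'}$. Dickson's classification of subgroups of $\SL_2(q)$ containing a transvection then identifies $\langle t, t' \rangle$ with $\SL_2(q_0)$ for some subfield $\F_{q_0} \le \F_q$, possibly up to a short list of exceptional configurations in small characteristic.

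The second step is to reconstruct an invariant geometry on $V$ from the commuting/non-commuting relation on $T$. Interpreting "centre on axis" as an incidence and verifying the Buekenhout--Shult or Veldkamp--Tits axioms, one shows that in all but a finite list of configurations $V$ inherits a non-degenerate alternating, Hermitian, or quadratic form; in characteristic $2$ the last genuinely occurs because there are orthogonal transvections, whereas in odd characteristic the orthogonal group is only generated by reflections, not transvections. Combined with irreducibility and the hypothesis that $G$ is not contained in a subfield subgroup, this pins $G$ down as the full $\SL_n(q)$, $\SU_n(\sqrt q)$, $\Sp_n(q)$, or $\Or_n^\pm(q)$, giving cases (1)--(4).

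The remaining cases arise when the transvection geometry degenerates. Cases (5) and (6), which exist only in characteristic $2$, correspond respectively to transvections lined up along a coordinate frame (producing the monomial group $C_a^{n-1} \rtimes S_n$) or to the natural transvection representation of $S_{n+1}$ or $S_{n+2}$ over $\F_2$; here I would recognise the geometry as that of a simplex. Cases (7)--(9) are exceptional low-dimensional covers ($\SL_2(5) \cong 2\cdot A_5$ inside $\SL_2(9)$, and the triple covers $3\cdot A_6 \le \SL_3(4)$ and $3\cdot \opr{P\Omega}_6^{-,\pi}(3) \le \SL_6(4)$), which would be isolated by a bounded computation in small dimension. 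The main obstacle is the characteristic $2$ analysis: the orthogonal family (4) together with the exceptional families (5)--(6) appear only in that setting, so the geometric reconstruction must accommodate a considerably richer supply of configurations than the odd-characteristic arguments of Wagner, Piper, and Pollatsek had to handle, and it is precisely this additional complexity that makes a uniform classification via Kantor's framework unavoidable.
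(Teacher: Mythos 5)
The paper does not prove this theorem; it is Kantor's classification, imported wholesale, and the paper's only contribution is the remark following the statement, which assembles the relevant citations: Wagner or Zalesskii--Serezkin for $q$ odd, Pollatsek (building on Piper) to show that for $q$ even the transvections form a single class of \emph{odd transpositions} in Aschbacher's sense or $G \cong 3\cdot A_6 < \SL_3(4)$, and then Kantor's Theorem~II, which rests on the Fischer--Aschbacher--Timmesfeld classification of groups generated by a class of odd transpositions. There is therefore no proof in the paper against which to check your argument; what follows compares your sketch against the route that the cited literature actually takes.

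Your sketch proposes a genuinely different route from the one the literature uses for the hard (even $q$) case. You suggest reconstructing a polar space directly from the incidence relation ``centre of $t$ lies on the axis of $t'$'' and invoking Buekenhout--Shult / Veldkamp--Tits to recover the form. Kantor's actual argument is group-theoretic rather than geometric at the crucial point: Pollatsek reduces the problem to odd transpositions, and the classification of such groups (Fischer, Aschbacher, Timmesfeld) then lists the possibilities, including the monomial groups, the symmetric groups, and the orthogonal groups in characteristic $2$. Your route is closer in spirit to Wagner's odd-characteristic treatment, where the geometry of pairs of transvections drives the argument. It is conceivable that a Buekenhout--Shult style reconstruction could be made to work, but it would have to be carried out; as written, the step ``verifying the Buekenhout--Shult axioms, one shows that $V$ inherits a form'' is the entire theorem, not a proof of it. In particular, the centres of all transvections of $\SL_n(q)$ and of $\Sp_n(q)$ are the same set of projective points, so the incidence structure on centres alone cannot separate these cases; what distinguishes them is whether the relation is symmetric, and establishing that symmetry (and then the polar-space axioms, and then non-degeneracy, and then the absence of further non-classical possibilities) is precisely what Pollatsek's and Kantor's theorems accomplish.

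Two smaller points. First, your rank-$2$ analysis overstates Dickson: two non-commuting transvections of $\SL_n(q)$ generate a subgroup of $\SL_2(q)$ containing a transvection, which by Dickson can be a Borel subgroup, a dihedral-by-unipotent configuration, $\SL_2(q_0)$ for a subfield $\F_{q_0}$, or in characteristic $3$ the exceptional $\SL_2(5)$; it is not automatically $\SL_2(q_0)$, and handling the non-$\SL_2$ alternatives is part of the work. Second, cases (5) and (6) do not ``correspond to transvections lined up along a coordinate frame'' in a way that is visible from a bounded piece of the geometry; these groups are irreducible, their transvection sets span $V$, and distinguishing them from the classical cases requires a global argument (indeed the present paper devotes Proposition~\ref{prop:sym-type-certificate} and Proposition~\ref{prop:monomial-type-certificate} to exactly this problem for bounded certificates). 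So the sketch is directionally plausible but has a genuine gap at its centre: the reconstruction of the invariant form, and the exclusion of the degenerate geometries, is asserted rather than argued, and it is not the argument the literature actually gives.
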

\begin{remark}
    The proof of \Cref{thm:classification} is somewhat scattered in the literature.
    For $q$ odd see \cite{wagner} or \cite{ZS}.
    If $q$ is even then \cite{pollatsek}*{Theorem~1} (which depends on a result of Piper) asserts that $G \cong 3 \cdot A_6$ in $\SL_3(4)$ or the transvections in $G$ form a single conjugacy class of odd transpositions
    in the sense of Aschbacher.
    Now \cite{kantor}*{Theorem~II} (which depends on deep results of Fischer, Aschbacher, and Timmesfeld) delineates the remaining possibilities.
\end{remark}

In the rest of the paper, when $G$ is a irreducible linear group generated by transvections,
we say $G$ has \emph{linear}, \emph{unitary}, \emph{symplectic}, \emph{orthogonal}, \emph{monomial}, or \emph{symmetric} \emph{type} according to cases (1)--(6) respectively of \Cref{thm:classification}.
In the three remaining cases (which will not concern us) we say $G$ has \emph{exceptional type}.
We refer to the linear, unitary, symplectic, and orthogonal cases collectively as \emph{classical}.

In the course of the paper we build on many of the ideas and tools already developed in \cites{GHS,H}, particularly the transvection graph and its associated features such as symplectic and unitary cycles.
An effort has been made however to keep the present paper reasonably self-contained and it should not be necessary to refer to \cites{GHS,H} extensively.
Where a proposition in the present paper closely parallels a corresponding proposition in \cites{GHS,H}, a brief note is made to that effect.
Moreover, while we are free to assume $p \in \{2,3\}$ throughout,
doing so would not simplify the paper in any way, and simplifications are obtained for $p \ge 3$ (namely, we avoid many of the difficult calculations of \cite{GHS}*{Sections~4.2--4.3}).

\subsection*{Acknowledgments}

I am indebted to Laszlo Pyber, Bill Kantor, and Daniele Garzoni for comments and guidance.
I am also grateful to the anonymous referee for noticing an important oversight, as well as correcting a great many minor errors/typos.

\section{Bounded rank}

Babai's conjecture is known for simple groups of Lie type of bounded rank.
This is a corollary of the independent work of Breuillard--Green--Tao and Pyber--Szab\'o.

\begin{theorem}[Breuillard--Green--Tao~\cite{BGT}, Pyber--Szab\'o~\cite{pyber--szabo}]
    \label{thm:babai-bounded-rank}
    Let $G$ be a finite simple group of Lie type of rank $n$ and let $X$ be a generating set for $G$. Then
    \[
        \ell_X(G) \le O\pfrac{\log |G|}{\log |X|}^{O_n(1)}.
    \]
\end{theorem}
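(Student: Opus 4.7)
The plan is to deduce the theorem from the product theorem for finite simple groups of Lie type of bounded rank, due independently to Breuillard--Green--Tao~\cite{BGT} and Pyber--Szab\'o~\cite{pyber--szabo}: there exist $\epsilon(n), \delta(n) > 0$ such that every generating set $A$ of $G$ with $|A| \le |G|^{1-\epsilon(n)}$ satisfies $|A^3| \ge |A|^{1+\delta(n)}$.

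First I would set $A_0 = X \cup X^{-1} \cup \{1\}$ and define $A_{k+1} = A_k^3$ iteratively. So long as $|A_k| \le |G|^{1-\epsilon(n)}$, the product theorem yields $|A_{k+1}| \ge |A_k|^{1+\delta(n)}$, and hence $|A_k| \ge |X|^{(1+\delta(n))^k}$ by induction on $k$. Consequently, after at most
\[
    k_0 = O_n\br{\log\frac{\log|G|}{\log|X|}}
\]
iterations we must have $|A_{k_0}| \ge |G|^{1-\epsilon(n)}$.

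To bridge from a large subset to the full group, I would appeal to the quasirandomness of $G$. By the Landazuri--Seitz--Zalesskii bound, every nontrivial complex irreducible representation of $G$ has dimension at least $|G|^{c(n)}$ for some $c(n) > 0$; hence, by the Gowers/Babai--Nikolov--Pyber covering lemma, any subset $S \subset G$ with $|S| \ge |G|^{1-c(n)/3}$ already satisfies $S^3 = G$. Choosing $\epsilon(n) \le c(n)/3$ at the outset, we conclude that $A_{k_0+1} = A_{k_0}^3 = G$. Since every element of $A_{k_0+1}$ has word length at most $3^{k_0+1}$ over $X \cup X^{-1}$, this gives
\[
    \ell_X(G) \le 3^{k_0+1} = O\pfrac{\log|G|}{\log|X|}^{O_n(1)},
\]
as required.

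The genuinely hard step is of course the product theorem itself, which constitutes the substance of~\cite{BGT} and~\cite{pyber--szabo}; the iteration and covering arguments above are routine and realize the standard three-step program (escape, product theorem, covering) going back to Helfgott~\cite{helfgott} for $\SL_2$.
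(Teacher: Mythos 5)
The paper does not prove this theorem: it is imported wholesale from Breuillard--Green--Tao~\cite{BGT} and Pyber--Szab\'o~\cite{pyber--szabo} with a bare citation, so there is no in-paper argument to compare against. Your proposal is the standard and correct derivation of the diameter corollary from the product theorem. Starting from $A_0 = X \cup X^{-1} \cup \{1\}$ and cubing, the growth estimate gives $|A_k|\ge |X|^{(1+\delta(n))^k}$ while $|A_k|\le|G|^{1-\epsilon(n)}$, so after $k_0 = O_n(\log(\log|G|/\log|X|))$ steps you reach a set of density at least $|G|^{1-\epsilon(n)}$, and $3^{k_0+1} = (\log|G|/\log|X|)^{O_n(1)}$ as required; the closing quasirandomness step (Landazuri--Seitz together with the Gowers/Nikolov--Pyber covering lemma) is exactly the mechanism the paper itself deploys in the proof of \Cref{main-thm-bounded-rank}, so your toolkit is consistent with the paper's. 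You correctly identify the product theorem as the single deep input and note that the ``escape'' step of Helfgott's three-step scheme is vacuous here since $X$ is assumed to generate. Two minor points worth a word in a polished writeup: each $A_k$ contains $A_0$ and hence still generates $G$ (needed to invoke the product theorem at every step), and $|X|\ge 2$ because $G$ is a nonabelian simple group, so $\log|X|$ is bounded away from zero and the exponent calculus is legitimate.
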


\Cref{thm:main} in bounded rank follows immediately from this and standard tools, as detailed below.

\begin{proposition}
    \label{main-thm-bounded-rank}
    In the situation of \Cref{thm:main} we have
    \[
        \ell_X(G) \le O\pfrac{\log |G|}{\log |X|}^{O_n(1)}.
    \]
\end{proposition}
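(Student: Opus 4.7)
The plan is to reduce to \Cref{thm:babai-bounded-rank} by passing to the (essentially unique) non-abelian simple section of $G$, then lifting the diameter bound. Since $n$ is bounded, all logical constants below may depend on $n$.

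In the orthogonal case $G = \Or_n^\pm(q)$ with $q$ even, the derived subgroup $H = \Omega_n^\pm(q)$ is simple of index $2$ in $G$. A Schreier-coset argument furnishes a generating set $Y$ for $H$ with $|Y| \le 2|X|$ and $\ell_X(y) \le 3$ for each $y \in Y$. Applying \Cref{thm:babai-bounded-rank} to $H$ with respect to $Y$ gives $\ell_Y(H) \le O(\log|G|/\log|X|)^{O_n(1)}$, and this translates to the same bound on $\ell_X(G)$ up to a constant factor, since every element of $G$ is either $h$ or $xh$ for a fixed $x \in X$ and $h \in H$.

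In the remaining cases $G \in \{\SL_n(q), \SU_n(\sqrt q), \Sp_n(q)\}$, let $Z = Z(G)$, which is cyclic of order $|Z| \le n = O_n(1)$, and $G/Z$ is simple of Lie type of rank $\le n$. Let $\pi \colon G \to G/Z$ denote the quotient map. Applying \Cref{thm:babai-bounded-rank} to $G/Z$ with respect to $\pi(X)$ yields $D_0 := \ell_{\pi(X)}(G/Z) \le O(\log|G|/\log|X|)^{O_n(1)}$, so every $g \in G$ can be written as a product $wz$ where $w$ is a word of length $\le D_0$ in $X$ and $z \in Z$. It therefore remains to bound $\ell_X(Z)$ by $O_n(D_0)$.

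The main obstacle is this last step of bounding the length of central elements, as the center is a very small and invisible piece of $G$ from the viewpoint of an arbitrary generating set. The idea is to exploit the fact that $G$ is quasisimple, hence perfect, so $Z \le [G, G]$; by general properties of quasisimple groups of bounded rank (e.g.\ using Ore-type theorems), every $z \in Z$ is a product of at most $k = O_n(1)$ commutators in $G$. Given any representation $z = \prod_{i=1}^{k} [a_i, b_i]$, write $a_i = u_i \alpha_i$ and $b_i = v_i \beta_i$ with $u_i, v_i$ words of length $\le D_0$ in $X$ and $\alpha_i, \beta_i \in Z$ (using the diameter bound on $G/Z$); the centrality of $\alpha_i$ and $\beta_i$ yields the cancellation $[a_i, b_i] = [u_i, v_i]$, a word of length $\le 4 D_0$ in $X$. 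Hence $\ell_X(z) \le 4 k D_0 = O_n(D_0)$, giving $\ell_X(Z) \le O_n(D_0)$ and completing the proof.
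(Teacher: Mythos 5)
Your argument is correct in substance but takes a genuinely different route from the paper in the non-orthogonal cases, and the orthogonal case has a small slip in the direction of an inequality.

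For the non-orthogonal cases, the paper also begins by applying \Cref{thm:babai-bounded-rank} to the simple quotient, which gives $Y^m Z(G') = G'$, but then expands $Y^m$ to cover all of $G'$ using Freiman's $3/2$-theorem, invoking the Landazuri--Seitz bound and the Nikolov--Pyber product theorem to handle the potentially large center $|Z(G)| \le n$ of the linear and unitary groups. Your approach instead bounds $\ell_X(z)$ for central $z$ directly, via the observation that $z$ is a bounded product of commutators (bounded commutator width for quasisimple groups of bounded Lie rank, following from Ore-type theorems) together with the cancellation $[u\alpha, v\beta] = [u,v]$ for central $\alpha, \beta$. This is more conceptual and handles symplectic, linear, and unitary types uniformly, at the cost of citing a deep commutator-width result where the paper instead leans on quasirandomness machinery. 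Both are valid; your version is arguably cleaner.

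The one place where your writeup needs repair is the orthogonal case: you state that the Schreier set $Y$ satisfies $|Y| \le 2|X|$, but an upper bound on $|Y|$ is useless here. To conclude $\ell_Y(H) \le O(\log|G|/\log|X|)^{O_n(1)}$ from \Cref{thm:babai-bounded-rank} you need a \emph{lower} bound $\log|Y| \gtrsim \log|X|$, so that $\log|G|/\log|Y| \lesssim \log|G|/\log|X|$. This does hold — as in the paper, for an index-$2$ subgroup $H$ the Schreier generators satisfy $|Y| \ge |X|/2$, since $Y$ contains both $X \cap H$ and (for a fixed $x_0 \in X \setminus H$) the set $x_0(X \setminus H)$, one of which has at least $|X|/2$ elements — but this is what must be said, not the upper bound.
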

\begin{proof}
    Assume $X = X^{-1}$ and $1 \in X$ without loss of generality.
    Note that $[G:G'] = 2$ in the orthogonal case and $G = G'$ in the other cases.
    Therefore by Schreier's lemma $Y = X^3 \cap G'$ is a generating set for $G'$.
    Moreover $|Y| \ge |X|/2$. Indeed, if $G' < G$, since $X$ generates $G$ there is some $x_0 \in X \sm G'$.
    Since $X \cap x_0 G' \subset x_0 (X^2 \cap G')$, we have
    \[
        |X| = |X \cap G'| + |X \cap x_0 G'| \le 2 |X^2 \cap G'| \le 2 |Y|.
    \]

    Applying \Cref{thm:babai-bounded-rank} to the simple quotient $S = G'/Z(G')$, it follows that $Y^m Z(G') = G'$, where
    \[
        m \le O(\log |S|  / \log |Y|)^{O_n(1)} \le O(\log |G| / \log |X|)^{O_n(1)}.
    \]
    In particular $|Y^m| \ge |G' / Z(G')|$.
    Applying Freiman's 3/2 theorem (see \cite{tao-book}*{Theorem~1.5.2 and Proposition~1.5.4})
    we have (for any $k \ge 1$) either $Y^{2^k m} = G'$ or
    \[
        |Y^{2^k m}| \ge (3/2)^k |Y^m| \ge (3/2)^k |G' / Z(G')|.
    \]
    If $G$ is symplectic or orthogonal we have $|Z(G')| \le 2$, so with $k = 2$ we must have $Y^{4m} = G'$ and thus $X^{12m+1} = G$.

    If $G$ is linear or unitary we have $G = G'$ and $|Z(G)| \le n$, so
    \[
        |Y^{2^k m}| \ge \min(|G|, (3/2)^k |G| / n).
    \]
    By Landazuri--Seitz~\cite{landazuri--seitz} and \cite{nikolov--pyber}*{Corollary~2.5}, the minimal degree of a nontrivial complex representation of $G$ is at least $|G|^{c/n} > q^{c' n}$ for some constants $c,c'>0$.
    Applying \cite{nikolov--pyber}*{Theorem~1}, we get $Y^{3 \cdot 2^k m} = G$ provided that $(3/2)^k / n > q^{-c' n / 3}$.
    It suffices to take $k$ to be a suitable constant depending on $c'$.
    Thus $Y^{m'} = X^{3m'} = G$ for some $m' \le O(\log |G| / \log |X|)^{O_n(1)}$.
\end{proof}

This special case is used in the proof of \Cref{thm:main} in two essentially different ways.
First, it allows us to assume that $n$ is larger than any fixed constant, which enables to avoid low-rank pathologies.
Second, in the course of the proof of \Cref{thm:main} we will apply \Cref{main-thm-bounded-rank} to a wide variety of bounded-rank subgroups.

\section{The transvection graph}

Following \cites{H,GHS} we define a graph based on a collection of transvections. To motivate the definition we first recall the concept of the transposition graph (which we will also use).

If $T \subset \Sym(\Omega)$ is a set of transpositions, the \emph{transposition graph} $\Gamma(T)$ is the graph with vertex set $\Omega$ and edge set $\{ij : (ij) \in T\}$.

\begin{proposition}
    \label{prop:transposition-graph}
    Let $T \subset \Sym(\Omega)$ be a set of transpositions and let $G = \gen T$.
    Then $G$ is the direct product of the full symmetric groups on the components of $\Gamma(T)$.
    In particular the following are equivalent:
    \begin{enumerate}
        \item $\Gamma(T)$ is connected;
        \item $G$ is transitive;
        \item $G = \Sym(\Omega)$.
    \end{enumerate}
\end{proposition}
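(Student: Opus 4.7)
The plan is to reduce to the case where $\Gamma(T)$ is connected and then show that, in this case, $G = \Sym(\Omega)$ via a standard path-conjugation argument. For the reduction, observe that every $t = (ab) \in T$ fixes pointwise the complement of the connected component containing $\{a,b\}$, so $G$ stabilises each component $C_i$ of $\Gamma(T)$ setwise and acts trivially on every other component. Hence $G$ embeds into $\prod_i \Sym(C_i)$, and if we can show that the transpositions $T_i = T \cap \Sym(C_i)$ generate the full symmetric group $\Sym(C_i)$ whenever $\Gamma(T_i)$ (that is, the restriction of $\Gamma(T)$ to $C_i$) is connected, then by taking products we obtain the first assertion.

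For the connected case, the only ingredient is the three-letter identity $(ab)(bc)(ab) = (ac)$. Induct on the length $k$ of a shortest path $a = v_0, v_1, \ldots, v_k = b$ in $\Gamma(T)$: the base case $k=1$ holds because $(ab) \in T \subset G$, and the inductive step writes $(v_0 v_k) = (v_0 v_{k-1})(v_{k-1} v_k)(v_0 v_{k-1})$, in which the outer factor lies in $G$ by the inductive hypothesis and the middle factor lies in $T$. Since $\Gamma(T)$ is connected, this argument produces every transposition on $\Omega$, and transpositions generate $\Sym(\Omega)$.

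The three equivalences are then immediate: (1)$\Rightarrow$(3) is what we just proved, (3)$\Rightarrow$(2) is trivial, and (2)$\Rightarrow$(1) follows because if $\Gamma(T)$ has more than one component then the partition into components is a nontrivial $G$-invariant partition of $\Omega$, precluding transitivity. There is no serious obstacle in this proposition; it is entirely elementary, and the only step with any content is the conjugation identity, which is the classical mechanism for showing that a transitive group generated by transpositions is a full symmetric group.
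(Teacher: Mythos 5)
Your proof is correct, and it shares the overall structure of the paper's argument: reduce to the connected case and then recover the general case from the direct-product decomposition over components. The difference lies in how the connected case is handled. The paper inducts on $|\Omega|$: it passes to a spanning tree, deletes a leaf vertex $i$ together with its incident edge $(ij)$, applies the inductive hypothesis to $\Omega \sm \{i\}$ to get $\Sym(\Omega \sm \{i\})$, and then adjoins $(ij)$. You instead induct on the length of a shortest path between two vertices and use the conjugation identity $(ab)(bc)(ab) = (ac)$ to manufacture every transposition of a component directly as a word in $T$. Both are completely standard; yours is slightly more constructive (it exhibits explicit words of bounded length, which would also yield a diameter bound on the transposition-generated symmetric group, though that is not needed here), while the paper's version is marginally shorter once the spanning-tree reduction is granted. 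One small phrasing slip: you write that ``$G$ \ldots acts trivially on every other component,'' which is what each individual generator does, not $G$; what you actually want, and what makes the product decomposition an equality rather than an inclusion, is the observation (stated explicitly in the paper) that $T_i$ and $T_j$ commute elementwise for $i \ne j$, so $G$ is the internal direct product $G_1 \times \cdots \times G_k$.
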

\begin{proof}
    First assume $\Gamma(T)$ is connected. We will prove $G = \Sym(\Omega)$ by induction on $n$, the result being obvious if $n \le 1$.
    We may assume $\Gamma(T)$ is a tree by replacing $T$ with a subset.
    Let $i$ be a leaf of $\Gamma(T)$, let $t_0 = (ij) \in T$, and let $T_0 = T \sm \{t_0\}$.
    Then $\Gamma(T_0)$ can be identified with a tree with vertex set $\Omega_0 = \Omega \sm \{i\}$,
    so by induction $G_0 = \gen{T_0} = \Sym(\Omega_0)$, and it follows easily that
    $G = \gen {G_0, (ij)} = \Sym(\Omega)$.

    In general, let $\Omega_1, \dots, \Omega_k$ be the components of $\Gamma(T)$ and let $T_i \subset T$ be the set of transpositions in $T$ supported on $\Omega_i$. Then $T = T_1 \cup \cdots \cup T_k$ and the elements of $T_i$ commute with the elements of $T_j$ for $i \ne j$, so $G = G_1 \times \cdots \times G_k$ where $G_i = \gen{T_i}$.
    By the connected case, $G_i = \Sym(\Omega_i)$ for each $i$.
\end{proof}

Now let $K$ be a field of characteristic $p>0$ and let $V \cong K^n$ be a vector space over $K$ of finite dimension $n \ge 2$.
A \emph{transvection} is an element $t \in \SL(V)$ such that $t-1$ is a rank-one map.
Let $\tv \subset \SL(V)$ be the set of transvections.
Each transvection $t \in \tv$ has the form
\[
    t = 1 + v \otimes \phi \qquad (v \in V, \phi \in V^*, \phi(v) = 0).
\]
We make $\tv$ into a directed graph $\Gamma(\tv)$ by declaring there to be a directed edge $t \to s$ if and only if ${(t-1)(s-1) \ne 0}$.
Put another way, there is an edge $1 + v \otimes \phi \to 1 + u \otimes \psi$ if and only if $\phi(u) \ne 0$.
For any subset $T \subset \tv$ we denote by $\Gamma(T)$ the corresponding induced subgraph.
We also define linear subspaces $V(T) \le V$ and $V^*(T) \le V^*$ by
\begin{align*}
    V(T) &= \gen{v : 1 + v \otimes \phi \in T} = \sum_{t \in T} \im(t-1) = [V, G],\\
    V^*(T) &= \gen{\phi : 1 + v \otimes \phi \in T} = \sum_{t \in T} \ker(t-1)^\perp = (V^G)^\perp,
\end{align*}
where $G = \gen T$.
Here $[V,G]$ denotes the commutator of $V$ and $G$ as subgroups of $V \rtimes G$, and $V^G$ denotes the space of $G$-invariants in $V$.

Let $T \subset \tv$ be an arbitrary set of transvections and let $G = \gen T$.
In the rest of this section we describe how important properties of $G$ can be detected through careful examination of the transvection graph $\Gamma(T)$ as well as the subspaces $V(T)$ and $V^*(T)$.
We start with irreducibility of $G$.

\begin{proposition}[\cite{GHS}*{Theorem~3.1}]
    \label{prop:irreducible}
    The group $G \le \SL(V)$ is irreducible if and only if
    \begin{enumerate}
        \item $V(T) = V$,
        \item $V^*(T) = V^*$,
        \item $\Gamma(T)$ is strongly connected.
    \end{enumerate}
\end{proposition}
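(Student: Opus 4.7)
The plan is to prove both implications using a single dichotomy: for a $G$-invariant subspace $W \le V$ and a transvection $t = 1 + v_t \otimes \phi_t \in T$, the relation $(t-1)W \subseteq W$ reads $\phi_t(W) v_t \subseteq W$, which forces either $v_t \in W$ or $\phi_t|_W = 0$ (both are allowed). Conversely, if $\phi_t(w) \ne 0$ for some $w \in W$, then $v_t = \phi_t(w)^{-1}(t-1)w \in W$.

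For the forward direction, assume $G$ is irreducible. Conditions (1) and (2) follow immediately from irreducibility applied to the $G$-submodules $V(T) = [V, G]$ and $(V^G)^\perp = V^*(T)$, using that $G \ne 1$ since $\dim V \ge 2$. For (3) I would argue contrapositively. If $\Gamma(T)$ is not strongly connected, it admits a nonempty proper subset $T_1 \subsetneq T$ such that no edge of $\Gamma(T)$ runs from $T \sm T_1$ into $T_1$ (take $T_1$ to be any source strongly connected component of the condensation DAG), i.e., $\phi_t(v_s) = 0$ whenever $t \in T \sm T_1$ and $s \in T_1$. Set $W := \gen{v_s : s \in T_1}$. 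I claim $W$ is a proper nonzero $G$-invariant subspace, contradicting irreducibility: it is nonzero since $T_1 \ne \emptyset$; it is $G$-invariant because each $s' \in T_1$ sends $v_s$ to $v_s + \phi_{s'}(v_s) v_{s'} \in W$ while each $t \in T \sm T_1$ fixes every spanning vector $v_s$; and it is proper because $W = V$ would force some nonzero $\phi_t$ with $t \in T \sm T_1$ to vanish on all of $V$.

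For the reverse direction, assume (1)--(3) and let $W \le V$ be $G$-invariant. Set $T_A = \{t \in T : v_t \in W\}$ and $T_B = \{t \in T : \phi_t|_W = 0\}$; by the dichotomy, $T \sm T_B \subseteq T_A$, so in particular $T = T_A \cup T_B$. If $T = T_A$ then $V(T) \le W$, and (1) forces $W = V$. If $T = T_B$ then $W \le V^*(T)^\perp = 0$ by (2). Otherwise both $T_A$ and $T \sm T_A$ are nonempty, and for any $t \in T \sm T_A \subseteq T_B$ and $s \in T_A$ we have $\phi_t(v_s) = 0$ because $v_s \in W$ and $\phi_t|_W = 0$. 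Thus no edge of $\Gamma(T)$ runs from $T \sm T_A$ into $T_A$, contradicting strong connectivity (3).

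The main technical point is keeping the edge directions of $\Gamma(T)$ straight and correctly identifying the partition of $T$ that witnesses failure of strong connectivity; once the dichotomy on transvections is in hand, everything else is routine linear algebra.
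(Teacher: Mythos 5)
Your proof is correct and is essentially the paper's argument: the paper's set $S = \{s \in T : \im(s-1) \le U\}$ is your $T_A$, the paper also works with the ``no incoming edges'' characterization of strong connectivity, and both directions hinge on the same observation that $(t-1)W \le W$ forces $v_t \in W$ or $\phi_t|_W = 0$. The only differences are cosmetic: you isolate the dichotomy as an explicit lemma and argue the necessity of (3) by contraposition, whereas the paper shows directly that any nonempty source set $S$ must satisfy $V(S) = V$ and hence $S = T$.
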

\begin{proof}
    \emph{Necessity:}
    Suppose $G$ is irreducible. Then $[V,G] = V$ and $V^G = 0$, so (1) and (2) hold.
    Now suppose $S$ is a nonempty subset of $T$ with no incoming edges in $\Gamma(T)$.
    Then, for $t \in T$ and $s \in S$,
    \begin{equation}
        \label{eq:no-incoming-edges}
        (t-1)(s-1) \ne 0 \implies t \in S.
    \end{equation}
    Let $U = V(S)$. Then $U$ is nonzero and $\gen S$-invariant, and the above equation shows that $T \sm S$ acts trivially on $U$, so $U$ is $G$-invariant, so $U = V$.
    Thus for every $t \in T$ there is some $s \in S$ such that $(t-1)(s-1) \ne 0$, whence $t \in S$ by \eqref{eq:no-incoming-edges}.
    Hence (3) holds.

    \emph{Sufficiency:} Let $U \le V$ be a nonzero $G$-invariant subspace. Let $S \subset T$ be the set of $s$ such that $\im(s-1) \le U$.
    Let $u \in U \sm \{0\}$.
    By (2) there is at least one $t \in T$ such that $(t-1) u \ne 0$.
    By $G$-invariance of $U$ it follows that $\im(t - 1) \le U$, so $t \in S$.
    Hence $S$ is nonempty, and the same calculation shows that $S$ has no incoming edges, so by (3) we have $S = T$. Thus by (1) we have $U = V$. Hence $G$ is irreducible.
\end{proof}

We say $G$ \emph{defined over} a subfield $L \le K$ if for some basis of $V$ the elements of $G$ are represented by matrices with entries in $L$.
The \emph{defining field} of $G$ is the smallest field such that $G$ is defined over $L$.

Given transvections $t_1, \dots, t_k \in \tv$, the \emph{weight} of the tuple $(t_1, \dots, t_k)$ is defined as
\begin{equation}
    \label{eq:weight-trace}
    w(t_1, \dots, t_k) = \tr((t_1 - 1) \cdots (t_k - 1)).
\end{equation}
If $t_i = 1 + v_i \otimes \phi_i$ for each $i$ then
\[
    w(t_1, \dots, t_k) = \phi_1(v_2) \phi_2(v_3) \cdots \phi_k(v_1).
\]
Note that $w(t_1, \dots, t_k) \ne 0$ if and only if $t_1, \dots, t_k$ is a directed cycle in $\Gamma(\tv)$.

If $T \subset \tv$ we denote by $L_k(T)$ the subfield generated by the weights of the cycles of $\Gamma(T)$ of length at most $k$.
Let $L(T) = \bigcup_{k=1}^\infty L_k(T)$.

\begin{proposition}
    [cf.~\cite{GHS}*{Section~3.2}]
%    \label{prop:field}
    Assume $G$ is irreducible.
    Then $L(T)$ is the defining field of $G$.
\end{proposition}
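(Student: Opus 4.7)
The inclusion $L(T) \subseteq {}$(defining field) is easy: if $G \subseteq M_n(L')$ in some basis, then each $t - 1 \in M_n(L')$ for $t \in T$, so $w(t_1, \dots, t_k) = \tr((t_1 - 1) \cdots (t_k - 1)) \in L'$, and hence $L(T) \subseteq L'$.

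For the reverse, set $L = L(T)$; I will find an $L$-form of $V$ preserved by $G$. The first step is a \emph{rescaling}: for each $t \in T$, choose $\lambda_t \in K^\times$ and replace $v_t, \phi_t$ by $\lambda_t v_t, \lambda_t^{-1} \phi_t$ (which leaves the transvection $t = 1 + v_t \otimes \phi_t$ unchanged), so as to arrange $\phi_s(v_{s'}) \in L$ for all $s, s' \in T$. Under such a rescaling the new $\phi_s(v_{s'})$ equals $(\lambda_{s'}/\lambda_s)$ times the old, so in the abelian group $A = K^\times/L^\times$ (written additively) the task is to solve $[\lambda_{s'}] - [\lambda_s] = -[\phi_s(v_{s'})]$ at every edge $s \to s'$ of $\Gamma(T)$. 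Equivalently, the $1$-cochain $\omega(s \to s') = [\phi_s(v_{s'})]$ must be a coboundary. Each directed cycle $s_0 \to \cdots \to s_l = s_0$ pairs with $\omega$ to give the class of its weight $w(s_0, \dots, s_{l-1})$, which is zero in $A$ since the weight lies in $L^\times$. Strong connectivity of $\Gamma(T)$ ensures that $Z_1(\Gamma(T); \Z)$ is $\Z$-spanned by directed cycles: given $z \in Z_1$, for each edge $e$ with $z_e < 0$ pick a directed cycle $C_e$ through $e$ (using a directed path from the head of $e$ back to its tail, which exists by strong connectivity), and then $z + \sum_{e : z_e < 0} |z_e| C_e$ is a non-negative $1$-cycle, which decomposes into directed cycles by the Eulerian circuit theorem. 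Hence $\omega$ is a coboundary and the rescaling exists.

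Having arranged $\phi_s(v_{s'}) \in L$ for all $s, s' \in T$, let $V_L \subseteq V$ be the $L$-subspace spanned by $\{v_s : s \in T\}$. Since $t(v_s) = v_s + \phi_t(v_s) v_t$ with $\phi_t(v_s) \in L$ and $v_s, v_t \in V_L$, $V_L$ is $G$-invariant. To show $\dim_L V_L = n$, use $V(T) = V$ to pick $t_1, \dots, t_n \in T$ whose $v$-vectors form a $K$-basis of $V$, and use $V^*(T) = V^*$ to pick $u_1, \dots, u_n \in T$ whose $\phi$-covectors form a basis of $V^*$. The matrix $M_{j,i} = \phi_{u_j}(v_{t_i})$ has entries in $L$ and is invertible over $K$ (it is the change-of-basis matrix between $(\phi_{u_j})$ and the dual basis of $(v_{t_i})$), so $\det M \in L^\times$. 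For any $s \in T$, writing $v_s = \sum_i a_i v_{t_i}$ and applying $\phi_{u_j}$ gives $\phi_{u_j}(v_s) = \sum_i M_{j,i} a_i$, a linear system with entries and right-hand side in $L$, so Cramer's rule yields $a_i \in L$. Hence $V_L$ is the $L$-span of $v_{t_1}, \dots, v_{t_n}$ of $L$-dimension $n$, and $V_L \otimes_L K = V$. In any $L$-basis of $V_L$ the $G$-action is by matrices in $M_n(L)$, so $G$ is defined over $L$.

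The main obstacle is the rescaling step: one needs the homological fact that $Z_1$ of a strongly connected directed graph is $\Z$-spanned by its directed cycles. Everything else reduces to elementary linear algebra once the rescaling is in place.
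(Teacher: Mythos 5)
Your proof is correct but takes a genuinely different route from the paper's. The paper identifies $L(T)$ with the trace field of $G$ (by expanding $\tr(t_1\cdots t_k)$ in terms of the quantities $\tr((t_{i_1}-1)\cdots(t_{i_m}-1))$ and vice versa), and then appeals to the classical fact, cited from Isaacs, that in positive characteristic the trace field of an irreducible representation of a finite group is precisely its defining field. Your argument instead produces the $L$-form explicitly: you rescale the representations $t=1+v_t\otimes\phi_t$ so that all pairings $\phi_s(v_{s'})$ land in $L$, using the fact that the relevant obstruction class in $H^1(\Gamma(T);K^\times/L^\times)$ vanishes because the cycle space of a strongly connected digraph is spanned over $\Z$ by directed cycles, and weights of directed cycles lie in $L^\times$ by definition of $L(T)$; then a Cramer's-rule argument identifies the $L$-span of $\{v_t\}$ as a genuine $L$-form preserved by $G$. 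What the paper's approach buys is brevity, at the cost of a black-box appeal to representation theory over non-algebraically-closed fields. What your approach buys is self-containment and a construction that is visibly parallel to the coherence argument the paper itself uses in \Cref{prop:invariant-form} to build an invariant form by choosing compatible scalars $\lambda_t$ along paths in $\Gamma(T)$; in that sense it fits the spirit of the transvection-graph machinery better, but it is longer and requires the (standard but not completely trivial) graph-homology fact you flag about $Z_1$ being spanned by directed cycles.
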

\begin{proof}
    It follows from \eqref{eq:weight-trace} (and induction on $k$) that $L_k(T)$ is equal to the field generated by $\{\tr(g) : \ell_T(g) \le k\}$,
    so $L(T) = \bigcup_{k=1}^\infty L_k(T)$ is the field generated by $\{\tr(g) : g \in G\}$ (the \emph{trace field} of $G$).
    It is well-known that the latter is precisely the defining field of $G$ in positive characteristic
    (see \cite{isaacs-book}*{Corollaries~9.5(c) and 9.23}).
\end{proof}

Again suppose $t_1,\dots,t_k \in \tv$. We define
\[
    d_s(t_1, \dots, t_k) = w(t_1, \dots, t_k) - (-1)^k w(t_k, \dots, t_1)
\]
and we call the tuple $(t_1, \dots, t_k)$ \emph{symplectic} if $d_s = 0$.
Similarly, if $\theta$ is an involution of $K$, we define
\[
    d_\theta(t_1, \dots, t_k) = w(t_1, \dots, t_k) - (-1)^k w(t_k, \dots, t_1)^\theta,
\]
and we call the tuple $(t_1, \dots, t_k)$ $\theta$-\emph{unitary} if $d_\theta = 0$.
(If $K$ is the field $\F_q$ where $q$ is a square then there is a unique involution of $K$ and we may drop the ``$\theta$-'' prefix.)
Note that if $(t_1, \dots, t_k)$ is symplectic or $\theta$-unitary then it is a cycle in $\Gamma(\tv)$ if and only if it is a two-way cycle.

\begin{proposition}
    [cf.~\cite{GHS}*{Theorem~3.12}]
    \label{prop:invariant-form}
    Assume $G$ is irreducible. Let $D$ be the directed diameter of $\Gamma(T)$.
    \begin{enumerate}
        \item There is a $G$-invariant symplectic form if and only if every cycle of $\Gamma(T)$ of length at most $2D+1$ is symplectic.
        \item There is a $G$-invariant $\theta$-unitary form if and only if every cycle of $\Gamma(T)$ of length at most $2D+1$ is $\theta$-unitary.
    \end{enumerate}
\end{proposition}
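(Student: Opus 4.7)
Both parts can be proved in parallel, constructing the form directly from the transvection data.

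\emph{Necessity} is a direct computation. If $B$ is a $G$-invariant symplectic (resp.\ $\theta$-unitary) form, each $t = 1 + v_t \otimes \phi_t \in T$ preserves $B$. Expanding $B(tx, ty) = B(x, y)$ forces $\phi_t = \lambda_t B(\cdot, v_t)$ for some $\lambda_t \in K^\times$, with the additional condition $\lambda_t + \lambda_t^\theta = 0$ in the $\theta$-unitary case. Substituting into $w(t_1, \dots, t_k) = \phi_1(v_2) \cdots \phi_k(v_1)$ and using $B(y, x) = -B(x, y)$ (resp.\ $B(y, x) = B(x, y)^\theta$) yields the claimed cycle identity.

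\emph{Sufficiency} is constructive. Write $M_{s,t} = \phi_s(v_t)$, so that edges of $\Gamma(T)$ correspond to $M_{s,t} \ne 0$ and cycle weights are products of the $M$'s around the cycle. \emph{First}, short symplectic (resp.\ $\theta$-unitary) cycles force edges to be bidirectional: if $M_{s,t} \ne 0 = M_{t,s}$, append to the edge $s \to t$ a directed path of length $\le D$ from $t$ to $s$ (which exists by \Cref{prop:irreducible}) to produce a cycle of length $\le D+1 \le 2D+1$ whose forward weight is nonzero but whose reversed weight is $0$, contradicting the hypothesis. \emph{Second}, I build a scaling $\lambda: T \to K^\times$ satisfying $\lambda_s/\lambda_t = -M_{s,t}/M_{t,s}$ (resp.\ $\lambda_s/\lambda_t = -M_{s,t}/M_{t,s}^\theta$ together with $\lambda_t + \lambda_t^\theta = 0$) on every edge. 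Choose a BFS spanning tree of the (now undirected) graph rooted at some $t_0$; by Step~1 it has depth $\le D$, so every fundamental cycle has length $\le 2D+1$. Setting $\lambda_{t_0}$ arbitrarily (anti-Hermitian in the $\theta$-unitary case, which is possible in any characteristic) and propagating along the tree, the consistency on each non-tree edge is exactly the symplectic/$\theta$-unitary identity on the fundamental cycle, which holds by hypothesis; the extra constraint $\lambda_t + \lambda_t^\theta = 0$ propagates because the $2$-cycle identity forces $M_{s,t} M_{t,s} \in \Fix(\theta)$. \emph{Third}, I pick a basis $\{v_{t_1}, \dots, v_{t_n}\}$ of $V$ from $\{v_t\}_{t \in T}$ (possible by $V(T) = V$) and define $B$ on this basis by $B(\cdot, v_{t_j}) = \phi_{t_j}/\lambda_{t_j}$, extended (sesqui)linearly. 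The $\lambda$ condition makes $B(\cdot, v_t) = \phi_t/\lambda_t$ hold for every $t \in T$, whence $B$ is alternating (resp.\ Hermitian); $G$-invariance of $B$ then follows by direct expansion of $B(tx, ty)$ on generators $t \in T$. Finally $B \ne 0$ and its radical is $G$-invariant, so by irreducibility $B$ is nondegenerate.

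The principal difficulty is the second step, which is essentially a cocycle computation in which the short-cycle hypothesis exactly cancels the obstruction to extending $\lambda$ from a spanning tree to all of $T$. The bound $2D+1$ in the statement is tuned to match the worst-case length of a BFS fundamental cycle in a graph of diameter $\le D$.
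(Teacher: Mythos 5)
Your argument is correct and follows essentially the same route as the paper. The necessity direction matches exactly. In the sufficiency direction, the paper also first establishes that every edge is two-way (using cycles of length $\le D+1$), then builds the scaling $\lambda$ via a "path-weight" function $\lambda_\gamma$ whose consistency for pairs of short paths is the content of the $\le 2D+1$ cycle hypothesis, and finally constructs the form via a linear map $L : K^T \to V \times V^*$ whose image is shown to be the graph of a semilinear isomorphism $*: V \to V^*$. Your BFS-tree framing of Step~2 is a clean equivalent of the paper's $\lambda_\gamma$ bookkeeping, and your Step~3 (define $B$ on a basis of $\{v_t\}$, then verify $B(\cdot,v_t)=\phi_t/\lambda_t$ for all $t\in T$) is the same well-definedness check the paper carries out via $\ker L_1 = \ker L_2$; both use the key observation that the edge relation $\lambda_s\phi_s(v_t)+\lambda_t\phi_t(v_s)^\theta=0$ holds trivially for non-adjacent pairs and therefore for all pairs, together with $V(T)=V$ and $V^*(T)=V^*$. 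The only cosmetic difference is normalization: the paper treats both cases uniformly by seeking an \emph{anti}-Hermitian form $f$ (with scalings $\lambda_t\in\Fix(\theta)$ and a final rescaling by some $\mu$ with $\mu^\theta=-\mu$ in the unitary case), whereas you aim directly at a Hermitian form (with $\lambda_t^\theta=-\lambda_t$). Both work; the paper's convention has the small advantage of literally the same equations in the symplectic and unitary cases. One spot where your write-up is terser than it should be is the assertion ``the $\lambda$ condition makes $B(\cdot,v_t)=\phi_t/\lambda_t$ hold for every $t\in T$'': this is true but needs a short computation evaluating both sides on $v_s$ for $s\in T$ and invoking the (possibly trivial) edge relation for each pair $(t_j,s)$ and for $(t,s)$, then using $V(T)=V$; the paper packages this same computation into the $\ker L_1 = \ker L_2$ step.
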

\begin{proof}
    We can prove both statements at once by seeking a $G$-invariant nondegenerate sesquilinear form $f$ such that $f(x, y) + f(y, x)^\theta = 0$ identically, where $\theta$ is an automorphism of $K$ of order at most $2$.
    If $\theta = 1$ and $p = 2$ then we additionally require $f(x, x) = 0$ identically.
    If $\theta$ is nontrivial we obtain a unitary form by multiplying $f$ by a nonzero scalar $\lambda$ such that $\lambda^\theta = - \lambda$.
    We use the unitary language for both cases.

    If $G$ preserves such a form $f$ then $V$ and $V^*$ can be identified by defining $v^*(u) = f(u, v)$ for all $u, v \in V$, and the resulting map $v \mapsto v^*$ is a $\theta$-semilinear isomorphism of $V$ with $V^*$.
    The transvections in $\Aut(f)$ are precisely the transvections of the form $1 + \lambda v \otimes v^*$ with $\lambda \in \Fix(\theta)$ and $f(v, v) = 0$. Thus the weight of a cycle $t_1, \dots, t_k$, where $t_i = 1 + \lambda_i v_i \otimes v_i^*$, is
    \[
        w(t_1, \dots, t_k)
        = \lambda_1 \cdots \lambda_k f(v_2, v_1) \cdots f(v_1, v_k)\\
        = (-1)^k w(t_k, \dots, t_1)^\theta,
    \]
    so every cycle is unitary.

    Conversely suppose every cycle in $\Gamma(T)$ of length at most $2D+1$ is unitary.
    For each $t \in T$ fix a representation $t = 1 + v_t \otimes \phi_t$.
    We claim we can choose nonzero scalars $\lambda_t \in \Fix(\theta)$ for each $t \in T$ such that
    \begin{equation}
        \label{eq:lambda-coherence}
        \lambda_t \phi_t(v_s) + \lambda_s \phi_s(v_t)^\theta = 0 \qquad (t,s \in T).
    \end{equation}
    Indeed, first note that every edge $(t, s)$ in $\Gamma(T)$ is two-way since every edge is contained in a cycle of length at most $D+1$, and every such cycle is unitary.
    Now for any path $\gamma = (t_0, t_1, \dots, t_k)$ in $\Gamma(T)$ of length at most $D+1$ define
    \[
        \lambda_\gamma = (-1)^k \frac{\br{\phi_0(v_1) \cdots \phi_{k-1}(v_k)}^\theta}{\phi_k(v_{k-1}) \cdots \phi_1(v_0)}.
    \]
    The cycle condition ensures that $\lambda_\gamma = \lambda_{\gamma'}^\theta$ whenever $\gamma$ and $\gamma'$ have the same endpoints and combined length at most $2D+1$.
    In particular $\lambda_\gamma \in \Fix(\theta)$ if $\gamma$ has length at most $D$.
    Choose $t_0 \in T$ arbitrarily and define $\lambda_t$ to be the common value of $\lambda_\gamma$ for paths $\gamma$ from $t_0$ to $t$ of length at most $D$. Then $(\lambda_t)$ is the required global solution to \eqref{eq:lambda-coherence}.

    Now define $L : K^T \to V \times V^*$ by
    \[
        L(x) = (L_1(x), L_2(x)) = \br{\sum_{t \in T} x_t v_t, \sum_{t \in T} x_t^\theta \lambda_t \phi_t}.
    \]
    By \eqref{eq:lambda-coherence} we have
    \[
        \lambda_t \phi_t(L_1(x)) + L_2(x)(v_t)^\theta = 0 \qquad (t \in T, x \in K^T).
    \]
    In particular, since $V(T) = V$ and $V^*(T) = V^*$, we have $L_1(x) = 0$ if and only if $L_2(x)
     = 0$.
    Thus $\im(L) \subset V \times V^*$ is the graph of a $\theta$-semilinear isomorphism $*:V \to V^*$ such that $v_t^* = \lambda_t \phi_t$ for all $t \in T$.
    Define $f(u, v) = v^*(u)$ for $u, v \in V$.
    Then $f$ is sesquilinear and nondegenerate, and by \eqref{eq:lambda-coherence} we have
    \[
        f(v_s, v_t) + f(v_t, v_s)^\theta = 0 \qquad (t, s \in T),
    \]
    which implies that $f$ is hermitian antisymmetric since $\im(T) = V$.
    Moreover for every $t \in T$ we have
    \[
        f(v_t, v_t) = v_t^*(v_t) = \lambda_t \phi_t(v_t) = 0.
    \]
    In particular if $p = 2$ and $\theta = 1$ then $f$ is alternating.
    Finally, since each $t \in T$ has the form $t = 1 + v_t \otimes \phi_t = 1 + \lambda_t^{-1} v_t \otimes v_t^*$ with $\lambda_t \in \Fix(\theta)$, $f$ is $G$-invariant, as required.
\end{proof}

Finally, we explain how to determine from the transvection graph whether $G$ is contained in an orthogonal group. Note that this is only relevant in characteristic $2$.

\begin{proposition}
    \label{prop:orthogonal}
    Assume $K$ is a perfect field of characteristic $2$.
    Assume $G$ is irreducible and preserves a symplectic form $f$.
    Let $v \mapsto v^*$ be the isomorphism $V \to V^*$ defined by $v^*(u) = f(u,v)$.
    Then for each $t \in T$ there is a unique $v \in V$ such that $t = 1 + v \otimes v^*$,
    and $G$ preserves a quadratic from associated to $f$ if and only if for every linear relation
    \begin{equation}
        \label{eq:linear-relation}
        \sum_{i=1}^m \lambda_i v_i = 0 \qquad (t_i = 1 + v_i \otimes v_i^* \in T~\text{for}~i = 1, \dots, m)
    \end{equation}
    with $m \le n+1$
    we have the corresponding quadratic relation
    \begin{equation}
        \label{eq:quadratic-relation}
        \sum_{i=1}^m \lambda_i^2 + \sum_{1 \le i < j \le m} \lambda_i \lambda_j f(v_i, v_j) = 0.
    \end{equation}
\end{proposition}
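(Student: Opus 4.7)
The plan is to convert the existence of a $G$-invariant quadratic form with polarization $f$ into a linear-algebra criterion for the vectors $\{v_t\}$. Uniqueness of $v$ in the decomposition $t = 1 + v \otimes v^*$ is immediate from rank-one considerations: any other representative is $cv$ for some $c \in K^\times$, and matching the rank-one operators forces $c^2 = 1$, hence $c = 1$ in characteristic $2$. Since $G = \gen T$, $G$ preserves a quadratic form $Q$ if and only if every $t \in T$ does. Using $tx = x + v_t^*(x) v_t$ and $v_t^*(x) = f(x, v_t)$, a short computation gives
\[
Q(tx) + Q(x) = v_t^*(x)^2 Q(v_t) + v_t^*(x) f(x, v_t) = v_t^*(x)^2 (Q(v_t) + 1),
\]
so $t$ preserves $Q$ if and only if $Q(v_t) = 1$.

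Next I would parametrize the quadratic forms on $V$ with polarization $f$. A base form $Q_0$ exists (concretely, $Q_0(\sum c_i e_i) := \sum_{i<j} c_i c_j f(e_i, e_j)$ in any basis $e_1, \dots, e_n$ of $V$). Any two forms with the same polarization differ by an additive map $h : V \to K$ satisfying $h(\lambda x) = \lambda^2 h(x)$, and perfectness of $K$ identifies such $h$ bijectively with squares of linear functionals via the Frobenius square root: $h(x) = g(x)^2$ for a unique $g \in V^*$. Hence every quadratic form with polarization $f$ has the form $Q_g(x) = Q_0(x) + g(x)^2$, and the condition $Q_g(v_t) = 1$ becomes $g(v_t) = \alpha_t$ where $\alpha_t \in K$ is the unique square root of $1 + Q_0(v_t)$.

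The existence of such a $g \in V^*$ is a standard extension problem: $v_t \mapsto \alpha_t$ extends to a linear functional on $V$ if and only if it respects every $K$-linear relation among the $v_t$, i.e.\ $\sum \lambda_i v_i = 0$ implies $\sum \lambda_i \alpha_i = 0$. Squaring (and using that Frobenius is injective in characteristic $2$) turns this into $\sum \lambda_i^2 (1 + Q_0(v_i)) = 0$. To get the stated form I would apply $Q_0$ to the relation $\sum \lambda_i v_i = 0$ and expand via polarization, obtaining $\sum \lambda_i^2 Q_0(v_i) + \sum_{i<j} \lambda_i \lambda_j f(v_i, v_j) = 0$; substituting this in yields exactly \eqref{eq:quadratic-relation}. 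The restriction $m \le n+1$ is justified because the space of relations among the $v_t$ is spanned by relations with at most $n+1$ terms (pick a basis of the span of $\{v_t\}$, of size $\le n$, and express each remaining $v_t$ in it), and the linear extension condition $\sum \lambda_i \alpha_i = 0$ propagates from a spanning set of relations to all relations.

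The main obstacle, though conceptually modest, is the Frobenius-based parametrization of quadratic forms with a prescribed polarization (which crucially needs perfectness of $K$); once this is set up, everything else reduces to polarization bookkeeping together with the standard linear extension criterion.
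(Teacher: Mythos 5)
Your proposal is correct in substance and differs from the paper mainly in how it organizes the converse direction. The paper's proof picks a basis $v_1, \dots, v_n$ from the transvective vectors (using irreducibility and $V(T)=V$), defines $Q$ as the unique quadratic form with polarization $f$ satisfying $Q(v_i)=1$ on that basis, and then checks $Q(v)=1$ for every other $1+v\otimes v^*\in T$ by comparing the hypothesis \eqref{eq:quadratic-relation} against the direct expansion of $Q(v)$. You instead parametrize the whole torsor of quadratic forms with polarization $f$ by linear functionals via the Frobenius square root, $Q_g = Q_0 + g^2$, and recast $G$-invariance as the solvability of the linear interpolation problem $g(v_t)=\alpha_t$; the $m\le n+1$ bound then comes from the standard fact that the relation space is spanned by relations of support $\le n+1$. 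This is a genuinely different packaging — more global and a bit more conceptual — while the paper's version is more concrete and slightly shorter. Both use perfectness in the same two places (Frobenius square roots, and absorbing the scalar $\lambda$ in $1+\lambda v\otimes v^*$).

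One small gap worth closing: you only argue \emph{uniqueness} of the representative $v$ with $t = 1+v\otimes v^*$, but the statement also asserts \emph{existence}. A priori a transvection preserving $f$ has the form $1+\lambda w\otimes w^*$ for some nonzero scalar $\lambda$; one needs perfectness to write $\lambda=\mu^2$ and set $v=\mu w$ to get the normalized form $1+v\otimes v^*$. You invoke perfectness elsewhere, so this is a one-line omission rather than a conceptual error, but it should be stated.
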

\begin{proof}
    We noted in the proof of the previous proposition that each transvection $t \in \tv \cap \Aut(f)$ can be written $t = 1 + \lambda v \otimes v^*$ with $\lambda \in K$.
    Since $K$ is perfect, $\lambda = \mu^2$ for some $\mu \in K$, so $t = 1 + u \otimes u^*$ where $u = \mu v$. Moreover if $u \otimes u^* = w \otimes w^*$ ($w \in V$) then $u = \lambda w$ for some $\lambda$ such that $\lambda^2 = 1$, whence $\lambda = 1$. Therefore each $t \in T$ has the form $1 + v_t \otimes v_t^*$ and $v_t$ is determined uniquely by $t$.

    Suppose $Q$ is a quadratic form associated to $f$. By a well-known direct calculation, a transvection $t = 1 + v \otimes v^*$ preserves $Q$ if and only if $Q(v) = 1$.
    Suppose $G$ preserves a quadratic form $Q$. Then $Q(v) = 1$ for all $t = 1 + v \otimes v^* \in T$.
    By applying $Q$ to \eqref{eq:linear-relation} we get \eqref{eq:quadratic-relation}.

    Conversely, suppose \eqref{eq:linear-relation} implies \eqref{eq:quadratic-relation} identically.
    Since $G$ is irreducible, $V(T) = V$ by \Cref{prop:irreducible}, so there is a basis $v_1, \dots, v_n$ among the vectors $v$ such that $1 + v \otimes v^* \in T$. There is a unique quadratic form $Q$ associated to $f$ such that $Q(v_i) = 1$ for $i = 1, \dots, n$. Now, if $1 + v \otimes v^* \in T$, because $v_1, \dots, v_n$ is a basis we have some linear relation
    \[
        v + \sum_{i=1}^n \lambda_i v_i = 0.
    \]
    To this linear relation we can apply either \eqref{eq:linear-relation} $\implies$ \eqref{eq:quadratic-relation} or we can apply the quadratic from $Q$.
    By comparing the results we find that $Q(v) = 1$.
    Thus $Q(v) = 1$ for all $1 + v \otimes v^* \in T$, so $Q$ is $G$-invariant.
\end{proof}

\section{Some splitting results}

Continue to assume $K$ is a field of characteristic $p > 0$ and $V \cong K^n$ is a vector space of dimension $n \ge 2$.
As before $\tv \subset \SL(V)$ denotes the set of transvections, $T \subset \tv$, and $G = \gen T$.

By \Cref{prop:irreducible}, $G$ is irreducible if and only if $\Gamma(T)$ is strongly connected and $V(T) = V$ and $V^*(T) = V^*$.
In this section we consider the situation in which $\Gamma(T)$ is strongly connected but one or other of the latter conditions may fail.

Throughout this section,
\[
    U = V(T) = [V,G], \qquad W = V(T) \cap V^*(T)^\perp = [V,G]^G.
\]
Observe that $U$ and $W$ are $G$-invariant subspaces, so $G$ acts naturally on the section $U/W$.
Let $g \mapsto \bar g$ denote the homomorphism $G \to \SL(U/W)$.

\begin{proposition}
    \label{prop:irreducible-section}
    Assume $\Gamma(T)$ is strongly connected.
    Then $\bar T$ is a set of transvections of $U/W$ and the map $t \mapsto \bar t$ defines a graph homomorphism $\Gamma(T) \to \Gamma(\bar T)$ that is surjective on both vertices and edges.
    Moreover, the weights of corresponding cycles are equal.
    In particular, $\bar G$ is irreducible.
\end{proposition}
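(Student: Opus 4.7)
The plan is to follow the action of each $t = 1 + v \otimes \phi \in T$ down to the section $U/W$, check that the induced map $\bar t$ is a transvection of $U/W$, and then apply \Cref{prop:irreducible} to $\bar T$. First I would observe that $V^*(T)^\perp = V^G$ (by the earlier identity $V^*(T) = (V^G)^\perp$), so $W = U \cap V^G \le V^G \le \ker(t-1) = \ker\phi$. Hence $\phi$ descends to a functional $\bar\phi \in (U/W)^*$, and since $v \in U$ the residue $\bar v \in U/W$ makes sense, giving $\bar t - 1 = \bar v \otimes \bar\phi$ on $U/W$ with $\bar\phi(\bar v) = \phi(v) = 0$. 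To see that $\bar t$ is a genuine (nontrivial) transvection, strong connectivity of $\Gamma(T)$ supplies an incoming edge $s \to t$, whence $\psi_s(v) \ne 0$ and so $v \notin V^G$, i.e., $\bar v \ne 0$; and an outgoing edge $t \to s'$, whence $\phi(v_{s'}) \ne 0$ with $v_{s'} \in U$, i.e., $\bar\phi \ne 0$ on $U/W$.

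Next I would address the graph-homomorphism and weight statements. An edge $t \to s$ in $\Gamma(T)$ amounts to $\phi_t(v_s) \ne 0$, and since $\bar\phi_t(\bar v_s) = \phi_t(v_s)$ this is equivalent to an edge $\bar t \to \bar s$ in $\Gamma(\bar T)$ (in particular $\bar t \ne \bar s$, since transvections have no self-loops). Thus $t \mapsto \bar t$ is a graph homomorphism, surjective on vertices by definition of $\bar T$ and surjective on edges by the equivalence just noted. Equality of weights is then immediate from the product formula:
\[
    w(\bar t_1, \dots, \bar t_k) = \bar \phi_1(\bar v_2) \cdots \bar \phi_k(\bar v_1) = \phi_1(v_2) \cdots \phi_k(v_1) = w(t_1, \dots, t_k).
\]

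Finally, to obtain irreducibility of $\bar G$ I would verify the three conditions of \Cref{prop:irreducible} for $\bar T$ acting on $U/W$. Condition (1), $V(\bar T) = U/W$, is immediate from $V(T) = U$. Condition (2), $V^*(\bar T) = (U/W)^*$, follows because any $\bar x \in U/W$ annihilated by every $\bar\phi_t$ is represented by $x \in U$ with $\phi_t(x) = 0$ for all $t \in T$; this puts $x \in V^*(T)^\perp = V^G$, hence $x \in U \cap V^G = W$, so $\bar x = 0$. Condition (3), strong connectivity of $\Gamma(\bar T)$, follows by lifting endpoints to $T$, taking a directed path in $\Gamma(T)$, and pushing down (consecutive identifications $\bar t_i = \bar t_{i+1}$ just collapse to stationary steps in the walk). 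The only mildly delicate point is the descent of $\phi$ to $U/W$, but this is forced by the inclusion $W \le V^G$, so there is no serious obstacle.
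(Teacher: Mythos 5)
Your proof is correct and follows essentially the same route as the paper's: strong connectivity produces in- and out-edges witnessing that $\bar t - 1$ is nontrivial of rank one, edges descend and lift via $\bar\phi_t(\bar v_s) = \phi_t(v_s)$, and irreducibility of $\bar G$ is obtained by checking the three conditions of the irreducibility criterion for $\bar T$. The only cosmetic difference is that the paper computes weights by descending the rank-one operator $(t_1-1)\cdots(t_k-1)$ and comparing traces, whereas you use the product formula directly; these are interchangeable.
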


\begin{proof}
    If $t \in T$, the strong connectedness of $\Gamma(T)$ ensures the existence of $s, s' \in T$ such that $(s-1)(t-1) \ne 0$ and $(t-1)(s'-1) \ne 0$.
    Thus $\im(t-1)$ is not contained in $W$ and $\ker(t-1)$ does not contain $U$,
    so $\bar t - 1$ is a nontrivial rank-one map of $U / W$.
    Thus $\bar T$ consists of transvections of $U / W$.
    Moreover if $(t, s)$ is an edge of $\Gamma(T)$ then $(t-1)(s-1)$ also descends to a nontrivial rank-one map of $U/W$, so $(\bar t, \bar s)$ is an edge of $\Gamma(T)$,
    and conversely it is clear that if $(\bar t -1)(\bar s - 1)$ is nonzero so must be $(t-1)(s-1)$.
    Thus $t \mapsto \bar t$ induces a graph homomorphism $\Gamma(T) \to \Gamma(\bar T)$ that is surjective on both vertices and edges.

    Suppose $(t_1, \dots, t_n)$ is a cycle of $\Gamma(T)$. Then $w(t_1, \dots, t_n) = \tr(\alpha)$ where $\alpha = (t_1-1) \cdots (t_n-1)$.
    Observe that $\alpha$ is a rank-one map such that $\im(\alpha) \le U$ and $\alpha(W) = 0$. Therefore $\tr(\alpha) = \tr(\bar \alpha)$, where $\bar \alpha$ is the image of $\alpha$ in $\End(U/W)$, so $w(t_1, \dots, t_n) = w(\bar t_1, \dots, \bar t_n)$.

    Finally, $V(\bar T) = U / W$ and $V^*(\bar T)^\perp = 0$.
    Since $\Gamma(T)$ is strongly connected, so its homomorphic image $\Gamma(\bar T)$.
    Thus $\bar G = \gen{\bar T}$ is irreducible by \Cref{prop:irreducible}.
\end{proof}

We say $T \subset \tv$ is \emph{nondegenerate} if the natural pairing
\[
    V(T) \times V^*(T) \to K
\]
is nondegenerate.
This is equivalent to the presence of a direct sum decomposition
\[
    V = V(T) \oplus V^*(T)^\perp = [V,G] \oplus V^G.
\]
We say $T$ is \emph{weakly nondegenerate} if the natural pairing has either zero left kernel or zero right kernel.
Observe that $T$ is nondegenerate if and only if it is weakly nondegenerate and $\dim V(T) = \dim V^*(T)$.

\begin{proposition}
    Suppose $G$ preserves a nondegenerate sesquilinear form $f$.
    Then $\dim V(T) = \dim V^*(T)$. In particular if $T$ is weakly nondegenerate then it is nondegenerate.
\end{proposition}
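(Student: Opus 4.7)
The plan is to use the nondegenerate $G$-invariant sesquilinear form $f$ to produce a $G$-equivariant bijection between $V$ and $V^*$ and then compare the two dimensions through this bijection. Concretely, I would define $\Phi : V \to V^*$ by $\Phi(v)(u) = f(u,v)$; since $f$ is sesquilinear and nondegenerate, $\Phi$ is a $\theta$-semilinear bijection (in particular an $\F_p$-linear bijection preserving $K$-dimensions). Using $G$-invariance of $f$ in the form $f(gu, gv) = f(u, v)$, a one-line computation gives $\Phi(gv)(u) = f(u, gv) = f(g^{-1}u, v) = (g \cdot \Phi(v))(u)$, so $\Phi$ is $G$-equivariant. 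Consequently $\Phi$ restricts to a bijection $V^G \to (V^*)^G$, and in particular $\dim V^G = \dim (V^*)^G$.

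Next I would relate both sides of the desired equality to invariants using the natural pairing. From the definition in the previous section,
\[
    V^*(T) = (V^G)^\perp \le V^*,
\]
so $\dim V^*(T) = \dim V - \dim V^G$. Dually, $V(T) = [V, G]$, and the standard identification $(V^*)^G \cong (V/[V,G])^* = [V,G]^\perp = V(T)^\perp$ (which is just the statement that invariants of the dual are the dual of coinvariants) yields
\[
    \dim (V^*)^G = \dim V - \dim V(T).
\]

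Combining the three equalities gives
\[
    \dim V - \dim V(T) = \dim (V^*)^G = \dim V^G = \dim V - \dim V^*(T),
\]
hence $\dim V(T) = \dim V^*(T)$, which is the first claim. The ``in particular'' then follows immediately from the definitions of weak nondegeneracy and nondegeneracy given just before the proposition: a weakly nondegenerate pairing between two finite-dimensional spaces of the same dimension is automatically nondegenerate (a vanishing one-sided kernel forces the other side to have matching dimension). I do not anticipate any genuine obstacle here; the only care needed is bookkeeping of the conventions, namely verifying the $G$-equivariance of $\Phi$ with the standard left action $(g\phi)(u) = \phi(g^{-1}u)$ on $V^*$ and keeping straight which annihilator defines $V^*(T)$.
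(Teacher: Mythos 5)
Your proposal is correct, but it takes a slightly different route than the paper's. The paper's proof uses the specific structure of transvections in $\Aut(f)$: any such transvection has the form $1 + \lambda v \otimes v^*$ with $\lambda \in K$, so the semilinear map $v \mapsto v^*$ carries the spanning set of $V(T)$ to a spanning set of $V^*(T)$, giving $V^*(T) = \{v^* : v \in V(T)\}$ and hence equal dimensions directly. You instead never examine the form of transvections preserving $f$; you prove $G$-equivariance of $\Phi : v \mapsto v^*$, deduce $\Phi(V^G) = (V^*)^G$, and then pass to dimensions via the annihilator identities $V^*(T) = (V^G)^\perp$ (the paper's definition) and $(V^*)^G = [V,G]^\perp = V(T)^\perp$ (standard invariants/coinvariants duality). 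Your argument is a bit more general, as it applies to any $G$ preserving $f$ rather than only transvection-generated ones, while the paper's is shorter and yields the stronger statement $V^*(T) = \Phi(V(T))$ as a bonus. Both are valid; the bookkeeping you flag at the end is exactly right and does check out against the paper's conventions.
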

\begin{proof}
    For $v \in V$ let $v^* \in V^*$ be defined by $v^*(u) = f(u, v)$.
    The transvections in $\Aut(f)$ all have the form $1 + \lambda v \otimes v^*$ for some $\lambda \in K$.
    Since $v \mapsto v^*$ is semilinear it follows that $V^*(T) = \{v^* : v \in V(T)\}$ and $\dim V(T) = \dim V^*(T)$.
\end{proof}

\begin{proposition}
%    \label{prop:SL-W=0-case}
    Assume
    \begin{enumerate}
        \item $\Gamma(T)$ is strongly connected,
        \item $\bar G = \SL(U)$,
        \item $\dim U \ge 4$.
    \end{enumerate}
    Then
    $G \cong \SL(U) \ltimes U^d$ for some $d \ge 0$ and there is a basis for $V$ such that
    \[
        G = \left\{
            \begin{pmatrix}
                \bar g & u & 0 \\
                0 & I & 0 \\
                0 & 0 & I
            \end{pmatrix}
            : \bar g \in \SL(U), u \in U^d
        \right\}.
    \]
\end{proposition}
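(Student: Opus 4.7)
My plan is to place $G$ explicitly inside a parabolic subgroup of $\SL(V)$ reflecting the filtration $0 \subseteq U \subseteq V$ (hypothesis~(2) forces $W = 0$, since $\bar G$ is a priori a subgroup of $\SL(U/W)$ and the equation $\bar G = \SL(U)$ only makes sense when $U = U/W$), then analyze the kernel of $G \to \bar G = \SL(U)$ and split it off. Fix a vector-space complement $Y$ with $V = U \oplus Y$. Since $[V,G] = U$, each $g \in G$ has block form
\[
g = \begin{pmatrix} \bar g & B_g \\ 0 & I \end{pmatrix}, \qquad \bar g \in \SL(U),\quad B_g \in \Hom(Y, U),
\]
so $G$ is a subgroup of the group $P := \SL(U) \ltimes \Hom(Y, U)$ of all such block matrices.

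Let $N = \ker(G \to \SL(U))$, consisting of elements of the form $\begin{pmatrix} I & \beta \\ 0 & I \end{pmatrix}$. A direct computation shows $N$ is abelian---products collapse to $\begin{pmatrix} I & \beta_1 + \beta_2 \\ 0 & I \end{pmatrix}$ because each $\beta_i$ kills $U$ and lands in $U$---and $n \mapsto B_n$ identifies $N$ with an $\SL(U)$-submodule $M$ of $\Hom(Y, U) \cong U \otimes Y^*$, with $\SL(U)$ acting on the $U$ factor. Since $U$ is absolutely simple as an $\SL(U)$-module for $\dim U \ge 2$, Schur's lemma gives that every $\SL(U)$-submodule of $U \otimes Y^*$ has the form $U \otimes L$ for some subspace $L \le Y^*$. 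Hence $M = U \otimes L$ and $N \cong U^d$, where $d = \dim L$.

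To split $1 \to N \to G \to \SL(U) \to 1$, I look for a conjugate of the standard block-diagonal copy of $\SL(U) \le P$ lying inside $G$. Conjugation by $\begin{pmatrix} I & \alpha \\ 0 & I \end{pmatrix}$ sends $\SL(U)$ to $\left\{\begin{pmatrix} \bar g & (\bar g - I)\alpha \\ 0 & I \end{pmatrix} : \bar g \in \SL(U)\right\}$, and this conjugate lies in $G$ iff, for some (equivalently, any) set-theoretic section $\bar g \mapsto \beta_0(\bar g)$ of $G \to \SL(U)$, the reduction $\bar\beta_0 : \SL(U) \to \Hom(Y,U)/M$ equals the coboundary $\bar g \mapsto (\bar g - I)\alpha$. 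The obstruction is the class $[\bar\beta_0] \in H^1(\SL(U), \Hom(Y,U)/M) \cong H^1(\SL_n(q), (\F_q^n)^{m-d})$, where $n = \dim U$ and $m = \dim Y$. This vanishes for $n \ge 4$ by standard results on $\SL_n$-cohomology with natural-module coefficients (Cline--Parshall--Scott, van der Kallen). Finally, choose a basis $y_1, \ldots, y_m$ of $Y$ with $L = \langle y_1^*, \ldots, y_d^* \rangle$ and decompose $V = U \oplus Y_1 \oplus Y_2$ where $Y_1 = \langle y_1, \ldots, y_d \rangle$ and $Y_2 = \langle y_{d+1}, \ldots, y_m \rangle$. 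Then the elements of $M$ are supported on $Y_1$, giving $u \in \Hom(Y_1, U) \cong U^d$, and every $g \in G$ takes the claimed form. The main obstacle is the $H^1$-vanishing step---this is where the hypothesis $\dim U \ge 4$ is essential; the remaining steps reduce to linear algebra and Schur's lemma.
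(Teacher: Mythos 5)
Your argument follows the paper's proof essentially step for step: place $G$ in the parabolic determined by $U = [V,G]$, identify the kernel $N$ of $G \to \SL(U)$ with a submodule of $\Hom(V/U, U) \cong U \otimes (V/U)^*$, classify that submodule as $U \otimes L$, and then split the extension using $H^1(\SL(U), U^{m-d}) = 0$ for $\dim U \ge 4$ (the paper cites Higman; your Cline--Parshall--Scott/van der Kallen references serve equally well for $\dim U \ge 4$).

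There is one minor gap in the Schur's-lemma step that the paper is careful about and you gloss over. A priori $N$ is only a subgroup of the additive group $\Hom(Y,U)$ stable under conjugation by $G$, i.e., an $\F_p[\SL(U)]$-submodule, \emph{not} obviously a $K$-linear subspace. Your claim that ``every $\SL(U)$-submodule of $U \otimes Y^*$ has the form $U \otimes L$ for a subspace $L \le Y^*$'' needs the input $\End_{\F_p[\SL(U)]}(U) = K$ (not merely $\End_{K[\SL(U)]}(U) = K$, which is what absolute irreducibility over $K$ directly gives). That in turn rests on the fact that $\SL(U)$ is transitive on $U \sm \{0\}$ when $\dim U \ge 2$, so $U$ is irreducible even as an $\F_p[\SL(U)]$-module, and then Schur plus Wedderburn pins the endomorphism ring down to a finite field containing $K$, which must equal $K$. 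The paper spells this out explicitly (including a footnote); your appeal to ``absolutely simple'' plus ``Schur'' points in the right direction but doesn't quite close the loop, since absolute simplicity is a statement over $K$, and the submodule in question is only $\F_p$-linear to begin with. Once that is noted, the rest of your argument (including the $H^1$-splitting phrased as finding a unipotent conjugate of the Levi inside $G$) is equivalent to the paper's.
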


\begin{proof}
    Choose a basis for $V$ extending a basis for $U$.
    Since $G$ preserves $U = [V,G]$ and acts trivially on $V/U$, the elements of $G$ take the form
    \[
    \begin{pmatrix}
        * & * \\
        0 & I
    \end{pmatrix}.
    \]
    The elements of $M = C_G(U)$ have the form
    \[
    \begin{pmatrix}
        I & * \\
        0 & I
    \end{pmatrix},
    \]
    and $M$ can be identified with a subspace of $\Hom(V/U, U) \cong U^m$, where $m = \dim (V/U)$.
    Moreover the conjugation action of $G$ on $M$ corresponds to the natural action of $G$ on $U^m$.

    Since $\dim U \ge 4 > 1$ and $\bar G = \SL(U)$, $G$ acts transitively on $U \sm \{0\}$.
    In particular, $U$ is an irreducible $kG$-module where $k = \F_p$ is the prime subfield of $K$.

    Suppose $U_1$ is an irreducible $kG$-submodule of $U^m$.
    Then $U_1 \cong U$. Let $f : U \to U_1$ be an isomorphism.
    By composing with any of the $m$ coordinate projections $\pi_i:U^m \to U$ we get a map $\pi_i \circ f \in \End_{kG}(U)$.
    It is easy to check that $\End_{kG}(U) = K$ using the fact that each $\alpha \in \End_{kG}(U)$ must commute with every transvection of $U$.\footnote{Alternatively, by Schur's lemma, $\End_{kG}(U)$ is a division algebra containing $K$. If $K$ is finite, Wedderburn's little theorem implies that $\End_{kG}(U)$ is a field and therefore $\End_{kG}(U) = \End_{KG}(U) = K$ since $U$ is an absolutely irreducible $KG$-module.}
    Therefore there is some $a_i \in K$ such that $\pi_i \circ f(u) = a_i u$ for all $u \in U$.
    It follows that $f(u) = (a_1 u, \dots, a_m u)$ for all $u \in U$.
    This shows that all irreducible $kG$-submodules of $U^m$ are equivalent up to the natural action of $\GL_m(K)$.
    By induction on dimension, it follows that every $kG$-submodule of $U^m$ is equivalent to $\{0\}^{m-d} \oplus U^d$ for some $d$.

    Since $M$ can be identified with a $kG$-submodule of $U^m$, it follows from the previous paragraph that we can choose the basis so that
    \[
    M = \left\{\begin{pmatrix}
        I & 0 & u \\
        0 & I & 0 \\
        0 & 0 & I
    \end{pmatrix}
    : u \in U^d
    \right\}
    \]
    for some $d \le m$.
    Therefore
    \[
    G = \left\{
    \begin{pmatrix}
        \bar g & f(\bar g) & u \\
        0 & I & 0 \\
        0 & 0 & I
    \end{pmatrix}
    : \bar g \in \SL(U), u \in U^d
    \right\},
    \]
    for some well-defined map $f : \SL(U) \to U^{m - d}$. By direct calculation, $f$ must be a crossed homomorphism.

    By a result of Higman~\cite{higman}*{Lemma~4}, $H^1(\SL(U), U) = 0$ for $\dim U \ge 4$ (see also \cites{pollatsek-H^1,cline-parshall-scott}).
    Therefore $f(\bar g) = \bar g u - u$ for some $u \in U^{m - d}$, and by conjugating appropriately we can arrange that $f=0$.
\end{proof}

\begin{lemma}
    Let $H = \SL(V) \ltimes V^d$ and let $h \mapsto \bar h$ be the natural map from $H$ to $\bar H = \SL(V)$.
    Let $X$ be a generating set for $H$. Then
    \[
        \ell_X(H) \le 6^d \ell_{\bar X}(\bar H).
    \]
\end{lemma}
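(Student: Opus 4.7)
The plan is to proceed by induction on $d$. The base case $d = 0$ is trivial, since then $H = \bar H$ and $X = \bar X$. For the inductive step, I realise $H$ as a semidirect product $H = H' \ltimes V$, where $H' = \SL(V) \ltimes V^{d-1}$ and $V$ is the last summand of $V^d$ (on which $H'$ acts through its natural projection to $\bar H = \SL(V)$). Let $\pi : H \to H'$ be the quotient and set $D' = \ell_{\pi(X)}(H')$; by the inductive hypothesis $D' \le 6^{d-1} \ell_{\bar X}(\bar H)$, so it suffices to show $\ell_X(H) \le 6 D'$. Writing any $h \in H$ as $h = (v; 1) \cdot \tilde w$, where $\tilde w \in X^{D'}$ is the lift to $X$ of a shortest $\pi(X)$-word for $\pi(h)$, the task reduces to bounding $\ell_X((v; 1)) \le 5 D'$ for every $v \in V$.

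For this reduced problem, my strategy is to first produce a single nonzero $v_0 \in V$ with $(v_0; 1) \in X^{k_0}$ for some $k_0 \le 3 D'$. Given such a $v_0$, transitivity of $\SL(V)$ on $V \setminus \{0\}$ allows me to write every other nonzero $v \in V$ as $\bar g v_0$ for some $\bar g \in \bar H$; lifting a shortest $\bar X$-word for $\bar g$ produces $h_v \in X^{D}$ (with $D = \ell_{\bar X}(\bar H) \le D'$), and then $(v; 1) = h_v (v_0; 1) h_v^{-1}$ has length at most $k_0 + 2 D \le 5 D'$.

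To produce $v_0$ I distinguish three cases. If some $x \in X \setminus \{1\}$ satisfies $\pi(x) = 1$, take $v_0 = x$, with $k_0 = 1$. If distinct $x, x' \in X$ share the same image under $\pi$, take $v_0 = x (x')^{-1}$, with $k_0 \le 2$. Otherwise $\pi$ is injective on $X$ and $\pi^{-1}(1) \cap X \subset \{1\}$; I then choose, for each $g \in H'$, a $\pi(X)$-word $w_g$ of length at most $D'$ representing $g$, subject to the condition that $w_{\pi(x)}$ is the length-one word $(\pi(x))$ for every $x \in X \setminus \{1\}$, and lift letter-by-letter to a word $\tilde w_g$ in $X$, so that $\tilde w_{\pi(x)} = x$. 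The cocycle defects
\[
\delta(g_1, g_2) = \tilde w_{g_1} \tilde w_{g_2} \tilde w_{g_1 g_2}^{-1} \in V
\]
all lie in $X^{3 D'}$. If every $\delta(g_1, g_2)$ vanished then $g \mapsto \tilde w_g$ would be a homomorphism $H' \to H$ whose image is a complement to $V$ containing every element of $X$, contradicting the fact that $X$ generates $H$ while $V$ is nontrivial. Hence some $\delta(g_1, g_2)$ is a nonzero element of $V \cap X^{3 D'}$, furnishing the desired $v_0$.

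The crux is the third case: a priori $X$ might contain no short relation witnessing a nontrivial element of $V$. The cocycle-defect construction, combined with the observation that its universal vanishing would embed $X$ into a proper complement of $V$, is the mechanism that guarantees such a relation of length at most $3 D'$.
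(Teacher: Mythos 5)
Your proof is correct, and it takes a genuinely different route from the paper's at the crux: producing a short nontrivial element of the normal factor $V$. The paper's argument is a chain-stabilization pigeonhole: once $X^L$ covers $H/U$, either $X^{L+1} = X^L$ (hence $X^L = H$) or some $x \in X^{L+1}\setminus X^L$ differs from an element of $X^L$ by a nontrivial element of $U$ of length at most $2L+1$. Your argument instead reduces (after two easy cases) to the situation where $\pi$ is injective on $X$, and then extracts a short nontrivial element of $V$ as a cocycle defect $\tilde w_{g_1}\tilde w_{g_2}\tilde w_{g_1g_2}^{-1}$, using the observation that if all defects vanished the lifted words would form a complement to $V$ containing $X$, contradicting generation. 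Both approaches yield the same constant $6$; the paper's is shorter and needs no case split, while yours makes the mechanism (a non-split extension forced by generation) more explicit. The only implicit hypothesis both arguments share is $\dim V \ge 2$, so that $\SL(V)$ is nontrivial and acts transitively on $V\setminus\{0\}$, which holds throughout the paper.
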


\begin{proof}
    Assume without loss of generality $X = X^{-1}$ and $1 \in X$.
    We may also assume that $\ell_{\bar X}(\bar H) < \infty$.
    Let $W = V^d$ and let $U \cong V$ be an irreducible submodule of $W$.
    Let $H^* = H / U \cong \SL(V) \rtimes V^{d-1}$.
    By induction on $d$, $L = \ell_{X^*}(H^*) < \infty$.
    Therefore $X^L$ covers $H^*$ and $\bar H$ a fortiori.
    If $X^{L+1} = X^L$ then $X^L = H$.
    Otherwise there is some $x \in X^{L+1} \sm X^L$.
    Since $X^L$ covers $H^*$, there is some $y \in X^L$ such that $u = xy^{-1} \in U$, and $u$ is nontrivial since $x \ne y$.
    Now since $\bar H$ is transitive on $U \sm \{0\}$ and $X^L$ covers $\bar H$ we have
    \[
        U \sm \{0\} \subset u^{X^L} \subset X^{4L+1}.
    \]
    Since $X^L$ covers $H^* \cong H / U$ it follows that $H \subset X^{5L+1} \subset X^{6L}$. Thus $\ell_X(H) \le 6 \ell_{X^*}(H^*)$,
    and the claimed bound holds by induction.
\end{proof}

\begin{proposition}
    \label{prop:tv-in-SL-sections}
    Assume
    \begin{enumerate}
        \item $\Gamma(T)$ is strongly connected,
        \item $T$ is weakly nondegenerate,
        \item $\bar G = \SL(U/W)$,
        \item $\min(\dim V(T), \dim V^*(T)) \ge 4$.
    \end{enumerate}
    Then
    \[
        G \cap \tv = \{1 + v \otimes \phi : v \in V(T), \phi \in V^*(T)\}.
    \]
    Moreover $\ell_T(G) \le 6^d \ell_{\bar T}(\bar G)$, where
    \[
        d = | \dim V(T) - \dim V^*(T) | \le |T|.
    \]
\end{proposition}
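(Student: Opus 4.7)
The plan is to deduce this proposition from the preceding structure theorem (which describes $G$ when $\Gamma(T)$ is strongly connected and $\bar G = \SL(V(T))$) together with the preceding length lemma for $\SL \ltimes V^d$, reducing via a contragredient duality to the case $W=0$.

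Weak nondegeneracy says that either $W = V(T) \cap V^*(T)^\perp$ or the dual quantity $W' = V^*(T) \cap V(T)^\perp$ vanishes. These two cases are swapped by passing to the contragredient $G^* = \{(g^t)^{-1} : g \in G\} \le \SL(V^*)$: the isomorphism $g \mapsto (g^t)^{-1}$ sends each transvection $t = 1 + v \otimes \phi \in T$ to the transvection $t^* = 1 - \phi \otimes v$, yielding a generating set $T^*$ of $G^*$ with $V(T^*) = V^*(T)$, $V^*(T^*) = V(T)$, and $W(T^*) = W'$. Since this isomorphism sends $T$ to $T^*$ and hence preserves word lengths, the entire statement is invariant under duality, so I may assume $W = 0$.

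With $W = 0$ the hypotheses of the preceding proposition apply with $U = V(T)$ (using $\bar G = \SL(U/W) = \SL(U)$ and $\dim U \ge 4$), producing a decomposition $V = U \oplus V_1 \oplus V_2$ with $d = \dim V_1$ in which every $g \in G$ has block form
\[
g = \begin{pmatrix} \bar g & u & 0 \\ 0 & I & 0 \\ 0 & 0 & I \end{pmatrix}, \qquad \bar g \in \SL(U),~u \in U^d.
\]
A direct calculation from this block form gives $V^G = V_2$, whence $V^*(T) = V_2^\perp$ and $\dim V^*(T) = \dim U + d$; this identifies $d = \dim V^*(T) - \dim V(T) = |\dim V(T) - \dim V^*(T)|$, and the bound $d \le |T|$ follows since $V^*(T)$ is spanned by $|T|$ covectors.

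For $G \cap \tv$, I observe that for $g$ as above, $g - 1$ has image in $U$ and kernel containing $V_2$, so any transvection in $G$ has the form $g = 1 + v \otimes \phi$ with $v \in V(T)$ and $\phi \in V_2^\perp = V^*(T)$. Conversely, every such $(v, \phi)$ with $\phi(v) = 0$ is realized by taking $\bar g = 1 + v \otimes \phi|_U \in \SL(U)$ together with the element of $U^d$ corresponding to $v \otimes \phi|_{V_1}$. The length bound $\ell_T(G) \le 6^d \ell_{\bar T}(\bar G)$ is then an immediate consequence of the preceding lemma applied to $G \cong \SL(U) \ltimes U^d$. The main idea is the contragredient reduction; the rest is direct bookkeeping from the previous two results.
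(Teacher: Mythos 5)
Your proof is correct and follows essentially the same approach as the paper: reduce to the case $W=0$ via the contragredient isomorphism $g \mapsto (g^{t})^{-1}$ (which exchanges $V(T)$ and $V^*(T)$ and preserves word lengths), then apply the preceding structure proposition $G \cong \SL(U) \ltimes U^d$ to read off $G \cap \tv$ and invoke the preceding lemma for the length bound. The only difference is that you spell out the computation of $V^G$, the identification $d = \dim V^*(T) - \dim V(T)$, and the converse inclusion in a bit more explicit detail than the paper does.
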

\begin{proof}
    For $v \in V$ and $\phi \in V^*$, let $\langle v, \phi \rangle = \phi(v)$.
    Then $\langle,\rangle:V\times V^* \to K$ is a perfect pairing and for each $g \in \SL(V)$ there is a corresponding element $g^* \in \SL(V^*)$ defined by
    \[
        \langle gv, \phi\rangle = \langle v, g^*\phi\rangle.
    \]
    Moreover $g \mapsto (g^*)^{-1}$ is isomorphism $\SL(V) \to \SL(V^*)$ which exchanges $V(T)$ and $V^*(T)$.
    To be precise, if $V_1 = V^*$ then $V_1^* = V$ naturally, and $T^* = \{(t^*)^{-1} : t \in T\}$ is a set of transvections in $\SL(V_1)$ such that $V_1(T^*) = V^*(T)$ and $V_1^*(T^*) = V(T)$.
    Therefore, up to replacing $G$ by $G^* = \{g^* : g \in G\}$, we can assume that $V(T) \cap V^*(T)^\perp = 0$, i.e., $W = 0$.
    Now the previous proposition applies, so $V = X \oplus V^G$ for some $G$-invariant complement $X \le V$ to $V^G$ (containing $U$) and
    \[
        G = \{g \in \SL(V) : (g-1)(X) \le U, (g-1)(V^G) = 0\}.
    \]
    In particular, if $t = 1 + v \otimes \phi \in \tv$, then $t \in G$ if and only if $v \in U = V(G)$ and $\phi(V^G) = 0$, i.e., $\phi \in (V^G)^\perp = V^*(T)$.

    The last part follows from $G \cong \bar G \ltimes U^d$ and the previous lemma.
\end{proof}

\section{Path shortening and other tricks}

The method of this section is inspired by \cite{GHS}*{Lemma~4.1} and \cite{H}*{Lemma~2.4}.

Given a vector $u \in V$ and a transvection $t = 1 + v \otimes \phi \in \tv$,
let us write $t \to u$ if $\phi(u) \ne 0$.
Similarly given $\psi \in V^*$ let us write $\psi \to t$ if $\psi(v) \ne 0$.
This notation is compatible with the corresponding notation for the transvection graph: if $t = 1 + v \otimes \phi$ and $s = 1 + u \otimes \psi$ then
\[
    t \to s \iff t \to u \iff \phi \to s \iff \phi(u) \ne 0.
\]

Call $T \subset \tv$ \emph{dense} if for every nonzero $v \in V$ and nonzero $\phi \in V^*$ there is some $t \in T$ such that
\[
    \phi \to t \to v.
\]
Clearly if $T$ is dense then $V(T) = V$, $V^*(T) = V^*$, and $\Gamma(T)$ is strongly connected (in fact $\Gamma(T)$ has directed diameter at most $2$), so $G = \gen T$ is irreducible by \Cref{prop:irreducible}.
The following useful proposition is a partial converse.

\begin{proposition}
    \label{prop:density}
    Let $T \subset \tv$ be a symmetric set of transvections such that $\gen T$ is irreducible.
    Then $T^{2n-1} \cap \tv$ is dense.
\end{proposition}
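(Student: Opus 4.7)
The plan is to construct the required transvection $s$ as a conjugate $s = h\tau h^{-1}$ with $\tau \in T$ and $h \in T^{\le n-1}$; any such $s$ is automatically a transvection and lies in $T^{\le 2n-1}$. Writing $s = 1 + (hv(\tau)) \otimes (\phi(\tau) \circ h^{-1})$, the density conditions $\phi_0 \to s$ and $s \to v_0$ become the two non-vanishing conditions $\phi_0(h v(\tau)) \ne 0$ and $\phi(\tau)(h^{-1} v_0) \ne 0$.

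First I would establish the ascending chain lemma: for any nonzero $u \in V$, the chain $V_k = \mathrm{span}(T^{\le k} u)$ reaches $V$ by step $n-1$. Once $V_k = V_{k+1}$, the subspace $V_k$ is $T$-invariant, hence $G$-invariant (using $T = T^{-1}$ and that $T$ generates $G$), so by irreducibility $V_k = V$; strict dimensional growth before stabilization forces $V_{n-1} = V$. Given nonzero $v_0$ and $\phi_0$, I would then let $k_0 \le n-1$ be minimal such that some $g \in T^{\le k_0}$ satisfies $\phi_0(g v_0) \ne 0$, factor such a witness as $g = t g'$ with $g' \in T^{\le k_0-1}$, and combine the identity
\[
 \phi_0(g v_0) = \phi_0(g' v_0) + \phi(t)(g' v_0)\,\phi_0(v(t))
\]
with $\phi_0(g' v_0) = 0$ (forced by the minimality of $k_0$) to extract both $\phi(t)(g' v_0) \ne 0$ and $\phi_0(v(t)) \ne 0$.

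The natural candidate is then $s = (g')^{-1} t g' \in T^{\le 2k_0-1} \subseteq T^{\le 2n-1}$, a conjugate of $t$ whose $\phi$-part is $\phi(t) \circ g'$; evaluating at $v_0$ yields $\phi(t)(g' v_0) \ne 0$, so $s \to v_0$ holds automatically. The remaining condition $\phi_0((g')^{-1} v(t)) \ne 0$ is the crux. If it fails for this choice, the length budget still permits one further conjugation: $s' = h s h^{-1} \in T^{\le 2n-1}$ for some $h \in T$. Choosing $h$ with $\phi(h)(v_0) = 0$ preserves $s' \to v_0$ (since $h^{-1} v_0 = v_0$), and the $v$-part condition reduces to
\[
 \phi(h)((g')^{-1} v(t))\,\phi_0(v(h)) \ne 0,
\]
which is solved by any $h \in T$ satisfying $\phi(h)(v_0) = 0$, $\phi_0(v(h)) \ne 0$, and $\phi(h)((g')^{-1} v(t)) \ne 0$. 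Verifying the existence of such an $h$ is the hardest step; I would argue by contradiction that its failure forces $(g')^{-1} v(t)$ into the common kernel of enough covectors in $\mathrm{Ann}(v_0)$ to conclude that $v(t)$ is a scalar multiple of $g' v_0$, contradicting $\phi_0(v(t)) \ne 0$ together with $\phi_0(g' v_0) = 0$ — the required rigidity coming from the irreducibility of $G$ on $V^*$ together with $V(T) = V$.
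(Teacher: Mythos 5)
Your opening moves are sound: the ascending chain argument correctly produces some $g \in T^{\le n-1}$ with $\phi_0(gv_0) \ne 0$, and the identity $\phi_0(gv_0) = \phi_0(g'v_0) + \phi(t)(g'v_0)\phi_0(v(t))$ combined with minimality of $k_0$ does extract $\phi(t)(g'v_0) \ne 0$ and $\phi_0(v(t)) \ne 0$ (modulo the unhandled edge case $k_0 = 0$, i.e., $\phi_0(v_0) \ne 0$, where there is no $t$ to factor out). This gives one half of what you need: the conjugate $s = (g')^{-1}tg'$ satisfies $s \to v_0$ for free.

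The gap is the other half, $\phi_0 \to s$, and it is a genuine one. You minimize the \emph{word length} $k_0$ in the group, which gives you exactly one vanishing condition ($\phi_0(g'v_0)=0$). The paper instead minimizes the \emph{path length} $m$ in the transvection graph $\Gamma(T)$: a minimal directed path $\phi \to t_1 \to \cdots \to t_m \to v$ carries three families of no-shortcut conditions simultaneously ($\phi \not\to t_j$ for $j\ge2$; $t_i \not\to v$ for $i < m$; $t_i \not\to t_j$ for $j-i>1$), and these make the conjugate $t_1\cdots t_{m-1}\, t_m\, t_{m-1}^{-1}\cdots t_1^{-1}$ collapse cleanly so that \emph{both} $\phi \to t'$ and $t' \to v$ hold. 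Your word-length minimality cannot produce the analogous control over $\phi_0((g')^{-1}v(t))$; a symmetric factoring $g = g''t'$ has the dual defect. The proposed rescue — one more conjugation by $h \in T$ with $\phi(h)(v_0) = 0$, $\phi_0(v(h)) \ne 0$, $\phi(h)((g')^{-1}v(t)) \ne 0$ — asks for a transvection in $T$ satisfying three independent open/closed conditions at once, and there is no reason such $h$ exists: $T$ can be a small, rigid set (just enough to be irreducible), and in particular the subset $\{\phi(h) : h \in T,\ \phi(h)(v_0)=0\}$ need not span $\mathrm{Ann}(v_0)$. Your contradiction sketch quietly assumes that spanning, which is not available. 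Even if a single $h$ existed, nothing prevents the same obstruction from recurring, and your length budget does not allow indefinitely many patches. So the approach as stated does not close; the path-in-$\Gamma(T)$ minimality, not word minimality, is the device that makes the conjugation work in one shot.
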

\begin{proof}
    Let $v \in V$ and $\phi \in V^*$ be both nonzero.
    By \Cref{prop:irreducible}, $V(T) = V$, $V^*(T) = V^*$, and $\Gamma(T)$ is strongly connected.
    It follows that there are transvections $t_1, \dots, t_m \in T$ for some $m \ge 1$ such that
    \[
        \phi \to t_1 \to \cdots \to t_m \to v.
    \]
    Write $t_i = 1 + v_i \otimes \phi_i$ for $i = 1, \dots, m$.
    Assume $m$ is minimal for the existence of such $t_1, \dots, t_m$.
    If $m = 1$ we are done, so assume $m > 1$.
    By minimality of $m$,
    \begin{enumerate}
        \item $\phi \not\to t_2, \dots, t_m$,
        \item $t_1, \dots, t_{m-1} \not\to v$,
        \item $t_i \not\to t_j$ for $j - i > 1$.
    \end{enumerate}
    In particular $v_1, \dots, v_m$ must be linearly independent, so $m \le n$.
    Now consider
    \[
        t' = t_1 \cdots t_{m-1} t_m t_{m-1}^{-1} \cdots t_1^{-1} = 1 + v' \otimes \phi',
    \]
    where
    \begin{align*}
        v' &= t_1 \circ \cdots \circ t_{m-1} (v_m), \\
        \phi' &= \phi_m \circ t_{m-1}^{-1} \circ \cdots \circ t_1^{-1}.
    \end{align*}
    Using properties (1) and (3) above, we have
    \begin{align*}
        \phi(v')
        &= \phi (1 + v_1 \otimes \phi_1) \cdots (1 + v_{m-1} \otimes \phi_{m-1}) (v_m)\\
        &= \phi(v_1) \phi_1(v_2) \cdots \phi_{m-1}(v_m) \ne 0.
    \end{align*}
    Similarly, by property (2),
    \begin{align*}
        \phi'(v)
        &= \phi_m (1 - v_{m-1} \otimes \phi_{m-1}) \cdots (1 - v_1 \otimes \phi_1) (v) \\
        &= \phi_m (v) \ne 0.
    \end{align*}
    Thus $\phi \to t' \to v$. Since $t' \in T^{2m-1} \cap \tv$, we are done.
\end{proof}

\begin{proposition}
    \label{prop:connect-up}
    Let $T \subset \tv$ be a dense set of transvections.
    Let $T_0 \subset T$ be a nonempty subset and assume $\Gamma(T_0)$ has $k$ strongly connected components.
    Then there is a set of transvections $T_1 \subset T$ such that
    \begin{enumerate}
        \item $T_0 \subset T_1$,
        \item $|T_1| \le |T_0| + k$,
        \item $\Gamma(T_1)$ is strongly connected.
    \end{enumerate}
    If $G = \gen T$ preserves a symplectic or unitary form then we can assume (2') $|T_1| \le |T_0| + k - 1$.
\end{proposition}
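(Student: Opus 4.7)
The plan is to add transvections from $T$ that thread the strongly connected components of $\Gamma(T_0)$ into a single directed cycle. Let $C_1,\dots,C_k$ be the strongly connected components of $\Gamma(T_0)$. Since each $C_i$ is a nonempty set of transvections, the subspaces $V(C_i) \le V$ and $V^*(C_i) \le V^*$ are both nonzero.

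For each $i \in \{1,\dots,k\}$ (indices mod $k$), pick nonzero vectors $\phi_i \in V^*(C_i)$ and $v_{i+1} \in V(C_{i+1})$. By density of $T$ there is $t_i \in T$ with $\phi_i \to t_i \to v_{i+1}$. Set $T_1 = T_0 \cup \{t_1,\dots,t_k\}$; this clearly satisfies (1) and (2).

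The main verification is (3), that $\Gamma(T_1)$ is strongly connected. Writing each $t_i = 1+v_{t_i}\otimes\phi_{t_i}$ and expanding $\phi_i = \sum_{s \in C_i} \alpha_s \phi_s$ in terms of the defining covectors of the transvections $s = 1+v_s\otimes\phi_s \in C_i$, the condition $\phi_i(v_{t_i}) \ne 0$ forces $\phi_s(v_{t_i}) \ne 0$ for some $s \in C_i$, yielding an edge $s \to t_i$ in $\Gamma(T_1)$. Analogously, expanding $v_{i+1}$ over $C_{i+1}$ shows $t_i \to u$ for some $u \in C_{i+1}$. Combined with strong connectedness of each $C_i$, this produces the directed cycle $C_1 \to t_1 \to C_2 \to \cdots \to C_k \to t_k \to C_1$, so $\Gamma(T_1)$ is strongly connected.

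For the symplectic/unitary refinement (2'), the key observation is that \Cref{prop:invariant-form} (applicable since $T$ being dense makes $G$ irreducible) forces every short cycle in $\Gamma(T)$ to be symplectic/unitary, which in turn forces every edge of $\Gamma(T)$ to be two-way: if an edge $(t,s)$ sits in a cycle of length at most $D+1$ and that cycle has weight equal up to sign/Galois-twist to the weight of its reverse, then the reverse cycle is also a nonzero-weight cycle and hence contains the edge $(s,t)$. Consequently, it suffices to perform the above construction only for $i=1,\dots,k-1$, producing a weakly connected $\Gamma(T_1)$ that is then automatically strongly connected.

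The only real obstacle is the bookkeeping step of converting the abstract density condition ``$\phi \to t \to v$'' into actual component-level edges via the linear-combination expansion above; once that translation is in place, the construction is a direct application of the density hypothesis together with a cyclic ordering of the components.
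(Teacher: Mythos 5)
Your proof is correct and takes essentially the same route as the paper's: string the strongly connected components of $\Gamma(T_0)$ into a directed cycle by adding one transvection per gap via density, then omit the last link when all edges are two-way. The only stylistic difference is that you pick arbitrary nonzero $\phi_i \in V^*(C_i)$, $v_{i+1} \in V(C_{i+1})$ and then expand them as linear combinations to recover component-level edges, whereas the paper simply fixes a representative transvection $t_i \in C_i$ and applies density directly to its defining vector and covector, bypassing the expansion step.
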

\begin{proof}
    Let $t_1, \dots, t_k \in T_0$ be representatives for the $k$ strongly connected components of $\Gamma(T_0)$. By density of $T$ there are transvections $t'_1, \dots, t'_k \in T$ such that
    \[
        t_1 \to t'_1 \to t_2 \to t'_2 \to \cdots \to t'_{k-1} \to t_k \to t'_k \to t_1.
    \]
    Let $T_1 = T_0 \cup \{t'_1, \dots, t'_k\}$.
    Then $|T_1| \le |T_0| + k$ and $\Gamma(T_1)$ is strongly connected.

    If $G$ preserves a symplectic or unitary form then all edges are two-way and we can omit $t'_k$.
\end{proof}

Recall that $T$ is weakly nondegenerate if $V(T)\cap V^*(T)^\perp = 0$ or $V(T)^\perp \cap V^*(T) = 0$.
The \emph{defect} of $T$ is
\[
    \delta(T) = \min (\dim V(T) \cap V^*(T)^\perp, \dim V(T)^\perp \cap V^*(T)).
\]

\begin{proposition}
    \label{prop:winkle}
    Let $T \subset \tv$ be a dense set of transvections.
    Let $T_0 \subset T$ be a nonempty subset such that $\Gamma(T_0)$ is strongly connected.
    Then there is a set of transvections $T_1 \subset T$ such that
    \begin{enumerate}
        \item $T_0 \subset T_1$,
        \item $|T_1| \le |T_0| + \delta(T_0)$,
        \item $\Gamma(T_1)$ is strongly connected,
        \item $T_1$ is weakly nondegenerate.
    \end{enumerate}
\end{proposition}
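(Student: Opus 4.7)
The plan is to add transvections to $T_0$ one at a time, using density, so that $\delta$ strictly decreases at each step; then iterate $\delta(T_0)$ times. If $\delta(T_0) = 0$ take $T_1 = T_0$, so assume $\delta(T_0) \ge 1$. It suffices to exhibit a single $t \in T$ such that $T_0' := T_0 \cup \{t\}$ has $\Gamma(T_0')$ strongly connected and $\delta(T_0') = \delta(T_0) - 1$.

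Write $A = V(T_0)$, $B = V^*(T_0)^\perp$, $W = A \cap B$, and $W^* = (A+B)^\perp$, so that $\delta(T_0) = \min(\dim W, \dim W^*)$, and both $W$ and $W^*$ are nonzero by assumption. I pick nonzero $w \in W$ and nonzero $\psi \in W^* \subseteq V^*(T_0)$, and use density of $T$ to produce $t = 1 + v \otimes \phi \in T$ with $\psi(v) \ne 0$ and $\phi(w) \ne 0$. The first condition forces $v \notin A + B$ (since $\psi$ annihilates $A+B$); the second forces $\phi|_W \ne 0$, hence $\phi|_A \ne 0$ (as $W \subseteq A$). Expanding $\psi \in V^*(T_0)$ as a combination of the covectors of elements of $T_0$ shows some $s \in T_0$ has $\psi_s(v) \ne 0$, giving an edge $s \to t$ in $\Gamma$; symmetrically $\phi|_A \ne 0$ gives an edge $t \to s'$. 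Combined with the strong connectedness of $\Gamma(T_0)$, this makes $\Gamma(T_0')$ strongly connected.

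For the defect, the condition $v \notin A+B$ forces any element of $(A + \langle v\rangle) \cap B$ to lie in $A \cap B = W$, while $\phi|_W \ne 0$ gives the dual identity $A + (B \cap \phi^\perp) = A + B$. A routine dimension count then yields
\[
V(T_0') \cap V^*(T_0')^\perp = W \cap \phi^\perp, \qquad V(T_0') + V^*(T_0')^\perp = (A+B) + \langle v\rangle,
\]
so $\dim W$ drops by $1$ and $\dim W^*$ drops by $1$ in passing from $T_0$ to $T_0'$, hence $\delta$ drops by $1$. The main subtlety, and the reason for the specific choice $\psi \in W^*$ rather than an arbitrary nonzero element of $V^*(T_0)$, is to avoid the intermediate case $v \in (A+B)\setminus A$, in which $\dim W$ could actually increase; forcing $v \notin A + B$ altogether makes both defect spaces shrink in lockstep.
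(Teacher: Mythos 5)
Your proof is correct and takes essentially the same approach as the paper: pick nonzero $w$ in the left kernel and $\psi$ in the right kernel of the pairing, use density to find $t = 1 + v\otimes\phi$ with $\psi\to t\to w$, adjoin $t$, and verify that both kernel dimensions drop by one. The only cosmetic difference is that you compute the new right-kernel dimension directly via $V(T_0')+V^*(T_0')^\perp=(A+B)+\langle v\rangle$, whereas the paper simply invokes the symmetric argument.
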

\begin{proof}
    Consider the pairing $V(T_0) \times V^*(T_0) \to K$ and assume both the left and right kernels are nonzero.
    Assume $0 \ne u \in V(T_0) \cap V^*(T_0)^\perp$ and $0 \ne \psi \in V(T_0)^\perp \cap V^*(T_0)$.
    By density there is some
    \[
        t_1 = 1 + v_1 \otimes \phi_1 \in T
    \]
    such that $\psi \to t_1 \to u$.
    Let $T_1 = T_0 \cup \{t_1\}$.
    Then $\Gamma(T_1)$ is again strongly connected.
    Moreover since $\psi \in V^*(T_1)$ and $V(T_1) \cap \psi^\perp = V(T_0)$ we have
    \[
        V(T_1) \cap V^*(T_1)^\perp = V(T_0) \cap V^*(T_1)^\perp = V(T_0) \cap V^*(T_0)^\perp \cap \phi_1^\perp.
    \]
    Since $u \in V(T_0) \cap V^*(T_0)^\perp \setminus \phi_1^\perp$, this implies that
    \[
        \dim V(T_1) \cap V^*(T_1)^\perp = \dim V(T_0) \cap V^*(T_0)^\perp - 1
    \]
    and by a symmetric argument we have
    \[
        \dim V(T_1)^\perp \cap V^*(T_1) = \dim V(T_0)^\perp \cap V^*(T_0) - 1.
    \]
    Thus $\delta(T_1) = \delta(T_0) - 1$
    and we are done by induction on defect.
\end{proof}
%
%We need one more observation about defect.
%
%\begin{lemma}
%    \label{lem:defect-increase}
%    For any subsets $T_1, T_2 \subset \tv$,
%    \[
%        \delta(T_1 \cup T_2) \le \delta(T_1) + |T_2|.
%    \]
%\end{lemma}
%\begin{proof}
%    First, observe that if $X, Y, Z$ are subspaces of a vector space $V$ then
%    \[
%        \dim (X+Y) \cap Z \le \dim X \cap Z + \dim Y.
%    \]
%    Indeed,
%    \begin{align*}
%        \dim(X+Y) \cap Z
%        &= \dim (X+Y+Z) - \dim (X+Y) - \dim (Z) \\
%        &\le \dim (X+Z) + \dim Y - \dim X - \dim Z \\
%        &= \dim X \cap Z + \dim Y.
%    \end{align*}
%    Applying this with $X = V(T_1)$, $Y = V(T_2)$, $Z = V^*(T_1)^\perp$ gives
%    \begin{align*}
%        \dim V(T_1 \cup T_2) \cap V^*(T_1 \cup T_2)^\perp
%        &\le \dim (V(T_1) + V(T_2)) \cap V^*(T_1)^\perp \\
%        &\le \dim V(T_1) \cap V^*(T_1) + \dim V(T_2) \\
%        &\le \dim V(T_1) \cap V^*(T_1) + |T_2|. \\
%    \end{align*}
%    An analogous argument gives
%    \[
%        \dim V(T_1 \cup T_2)^\perp \cap V^*(T_1 \cup T_2) \le \dim V(T_1)^\perp \cap V^*(T_1) + |T_2|,
%    \]
%
%\end{proof}

\section{Certification}

The main new idea in the present work is to leverage \Cref{thm:classification} using ``certificates''. By a \emph{certificate} for $T$ we mean a bounded-size subset $T_0 \subset \gen T$ such that $\ell_T(T_0) \le (n \log q)^{O(1)}$ and such that $G_0 = \gen {T_0}$ acts irreducibly on $V_0 = V(T_0) = [V, G_0]$ with the same defining field and same type as $G$ itself.
We may assume $\dim V_0 \ge 10$ to avoid pathologies.
As a consequence of Kantor's classification theorem, it follows in particular that if $T_0 \subset H \le G$ and $H$ is irreducible then $H = G$.

Consider the containment diagram shown in \Cref{fig:classification} between the types of irreducible groups generated by transvections with defining field $\F_q$.
Here we have ignored the groups of exceptional type. This is harmless as we can always choose $T_0$ so that $\dim V_0 \ge 10$.
Now our strategy is to consider each of the possibilities above in turn.
For example, if $G$ is not contained in a symplectic group, then we aim to find a subset $T_0$ of bounded size which is also not contained in a symplectic group: this is actually easy using symplectic cycles.
Some of the other types will give us much more difficulty.

\begin{figure}
    \[\begin{tikzpicture}[scale=3]
        \node (L) at (0,0) {$\SL_n(q)$};
        \node (S) at (-1,-1) {$\Sp_n(q)$};
        \node (U) at (+1,-1) {$\SU_n(q_0)$};
        \node (O) at (-1,-2) {$\Or^\pm_n(q)$};
        \node (Sym1) at (-1,-3) {$S_{n+1}$};
        \node (Sym2) at (-0,-3) {$S_{n+2}$};
        \node (M) at (1, -3) {$C_a^{n-1} \rtimes S_n$};

        \draw (L) -- node[above, rotate=45] {$2 \mid n$} (S);
        \draw (L) -- node[above, rotate=-45] {$q=q_0^2$} (U);
        \draw (S) -- node[left] {$p = 2$} (O);
        \draw (O) -- node[left] {$q=2$} (Sym1);
        \draw (O) -- node[above,rotate=-45] {$q=2, 4 \nmid n$} (Sym2);
        \draw (S) to[bend left=15] node[right] {$q=2$} (Sym2);
        \draw (L) -- (U);
        \draw (L) -- node[left] {$p = 2$} (M);
        \draw (U) -- node[above, rotate=-90] {$a \mid q_0 + 1$, $p = 2$} (M);
    \end{tikzpicture}\]
    \caption{Containment between types of irreducible groups generated by transvections with defining field $\F_q$}
    \label{fig:classification}
\end{figure}
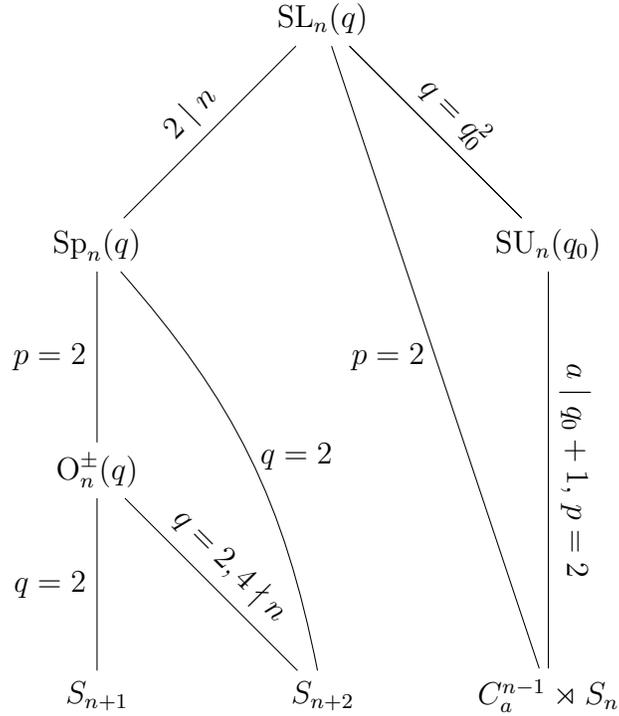

For the rest of the paper we assume $K = \F_q$ is a finite field of characteristic $p$. As in previous sections, $T \subset \tv$ and $G = \gen T$.
We assume $G$ is irreducible.
In consideration of \Cref{prop:density}, we assume $T$ is dense.

\subsection{Unitary and symplectic subgroups}

\begin{proposition}[cf.~\cite{GHS}*{Lemma~4.8}]
    \label{prop:unitary-symplectic-certificate}
    If $G$ does not preserve a nondegenerate symplectic (respectively, unitary) form then $\Gamma(T)$ contains a non-symplectic (respectively, non-unitary) cycle of length at most $5$.
\end{proposition}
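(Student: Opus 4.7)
The plan is to observe that this is essentially an immediate corollary of \Cref{prop:invariant-form} combined with the fact that density of $T$ forces the directed diameter $D$ of $\Gamma(T)$ to be at most $2$.

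First I would verify that $D \le 2$. Given any two transvections $t = 1 + v \otimes \phi$ and $s = 1 + u \otimes \psi$ in $T$, the vector $u$ is nonzero and the functional $\phi$ is nonzero, so by density there exists $r \in T$ with $\phi \to r \to u$, which gives a directed path $t \to r \to s$ of length $2$. Hence $D \le 2$, and therefore $2D + 1 \le 5$.

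Now apply \Cref{prop:invariant-form}: $G$ preserves a nondegenerate symplectic (resp.\ $\theta$-unitary) form if and only if every cycle in $\Gamma(T)$ of length at most $2D + 1 \le 5$ is symplectic (resp.\ $\theta$-unitary). Taking the contrapositive: if $G$ does not preserve such a form, there must exist a cycle of length at most $5$ that fails to be symplectic (resp.\ $\theta$-unitary), which is exactly the conclusion.

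There is no real obstacle here. The only thing to be slightly careful about is the unitary case, where the involution $\theta$ of $K$ is part of the data. Since $K = \F_q$ is a finite field (as assumed at the start of the section), when $q$ is a square there is a unique nontrivial involution and the $\theta$-prefix can be suppressed; when $q$ is not a square there is no nontrivial involution and the statement about unitary forms is vacuous (one must check either that the hypothesis automatically holds or that we are only asserting the symplectic part in that case). Either way the argument reduces cleanly to citing \Cref{prop:invariant-form} with $D \le 2$.
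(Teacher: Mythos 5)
Your proposal is correct and takes essentially the same route as the paper: the paper's proof is a one-liner citing \Cref{prop:invariant-form} together with the observation that density forces the directed diameter to be at most~$2$, which is exactly your argument (with the $D\le 2$ step spelled out).
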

\begin{proof}
    This is immediate from \Cref{prop:invariant-form}, since $\Gamma(T)$ has directed diameter at most $2$.
\end{proof}

\emph{Warning}:
The definition of unitary cycle depends on the field.
In particular even if $T_0 \subset T$ contains a cycle which is non-unitary with respect to $\F_q$, it is still quite possible that $G_0 = \gen{T_0}$ is a unitary group with respect to the subfield $L(T_0) < L(T)$.

\subsection{Symmetric-type subgroups}

Let us review the construction of the irreducible linear groups of symmetric type.
Let $G = S_m$ and consider its standard permutation representation $K^m$, where $K = \F_2$.
The unique proper nontrivial submodules are $\ell = (K^m)^G = \gen e$, where $e = (1, \dots, 1)$, and $H = [K^m, G] = \{x : x_1 + \cdots + x_m = 0\}$.
If $m$ is odd, $W = \ell \oplus H$.
If $m$ is even, $\ell \le H$.
Let $V = H / \ell \cap H$ and $n = \dim V = m - \gcd(m, 2)$.
The natural image of $G$ in $\SL(V)$ is a \emph{group of symmetric type}.
We call $G$ \emph{odd symmetric type} if $m$ is odd and \emph{even symmetric type} if $m$ is even.

The unique nonzero $G$-invariant alternating bilinear form on $K^m$ is given by $f(x, y) = \sum_{i \ne j} x_i y_j$,
and one checks $H^\perp = \ell$, so $f$ induces a $G$-invariant symplectic form on $V$.
Thus $G$ is conjugate to a subgroup of $\Sp_n(2)$.
One may also check that $G$ preserves a quadratic form on $V$ if and only if $m \not\equiv 2 \pmod 4$.

For small $m$ there are various coincidences with groups of classical type:
\begin{align*}
    &S_3 \cong S_4 / V \cong \SL_2(2), &&S_5 \cong \Or_4^-(2) \cong \SL_2(4) : 2, \\
    &S_6 \cong \Sp_4(2), &&S_8 \cong \Or_6^+(2) \cong \SL_4(2) : 2.
\end{align*}
It is therefore harmless to assume $m \ge 7$, $m \ne 8$.
Unless $m \in \{2,4\}$, an element of $G \cong S_m$ acts on $\{1, \dots, m\}$ as a transposition if and only if it acts on $V$ as a transvection.

\begin{lemma}
    \label{lem:transpositions-transvections}
    Let $G = S_m$ and let $\rho : G \to \SL(V) \cong \SL_n(2)$ be the representation defined above, where $n = m - \gcd(m, 2)$.
    Assume $m \ge 5$.
    An element $g \in G$ is a transposition if and only if $\rho(g)$ is transvection.
\end{lemma}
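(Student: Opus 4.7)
The plan is to compute the rank of $\rho(g) - 1$ on $V$ in terms of the cycle type of $g$, and to show that rank $1$ forces $g$ to be a transposition once $m \ge 5$. Throughout we use that on the permutation module $K^m$ the kernel of $g - 1$ is spanned by the cycle indicators $\chi_1, \dots, \chi_c$ of $g$, so $\rank(g - 1) = m - c$, and that $\im(g - 1) \subseteq H$ because $(g - 1)(x)$ always has zero coordinate sum in $\F_2$.

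For the forward direction, if $g = (ij)$ is a transposition then $\im(g - 1) = \langle e_i + e_j \rangle$, whose image in $V = H / (\ell \cap H)$ is one-dimensional because $e_i + e_j \ne e$ when $m \ge 3$; hence $\rho(g)$ is a transvection.

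For the converse, I would first reduce to $g$ being an involution. Transvections in characteristic $2$ have order $2$, and for $m \ge 5$ the representation $\rho$ is faithful (any nontrivial normal subgroup of $S_m$ contains $A_m$, yet a $3$-cycle acts with rank $2$ on $V$ by direct check), so $g^2 = 1$ and $g$ has cycle type $(2^k, 1^{m - 2k})$ with $k \ge 1$ and $c = m - k$. I would then compute $\rank(\rho(g) - 1)$ in three subcases. If $m$ is odd, $K^m = \ell \oplus H$ as $G$-modules with $g$ fixing $\ell$, so $\rank(\rho(g) - 1) = m - c = k$ on $V = H$. If $m$ is even and $g$ has at least one fixed point, $\ker(g - 1) \not\subseteq H$, so $(g - 1)|_H$ still has rank $m - c = k$; moreover its image avoids $\ell$ (since $e \in \im(g - 1)$ would force every cycle of $g$ to be even, contradicting the existence of a fixed point), giving $\rank(\rho(g) - 1) = k$ on $V = H / \ell$. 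Finally, if $m = 2k$ has no fixed points, $\ker(g - 1) \subseteq H$ forces $\dim \im(g - 1)|_H = k - 1$, and so $\rank(\rho(g) - 1) \le k - 1$.

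In the first two subcases, rank $1$ gives $k = 1$, i.e.\ $g$ is a transposition. The third would demand either $k = 2$ with $e \notin \im(g - 1)|_H$ (so $m = 4$) or $k = 3$ with $e \in \im(g - 1)|_H$ (so $m = 6$); a short calculation shows that any preimage of $e$ under $g - 1$ in the no-fixed-points setting has $\F_2$-coordinate sum $m/2$, so $e \in \im(g - 1)|_H$ iff $m/2$ is even, and neither of the two alternatives is compatible with this. The main obstacle is really the initial reduction to involutions: without $m \ge 5$ the representation $\rho$ has nontrivial kernel, and then rank-$1$ accidents from non-involutions (for instance $4$-cycles in $S_4$) obstruct the statement — which is precisely why the hypothesis is needed.
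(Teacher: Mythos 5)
Your proof is correct and follows essentially the same route as the paper's: reduce to the case where $g$ is an involution with cycle type $(2^k,1^{m-2k})$, compute the rank of $\rho(g)-1$ on $V$ in terms of $k$ by analysing $(g-1)(H)$ and its intersection with $\ell$, and conclude $k=1$. The one place you are somewhat more careful than the paper is the initial reduction to involutions: the paper simply asserts ``Then $g$ is an involution,'' whereas you justify it via faithfulness of $\rho$ for $m\ge 5$ (using that a $3$-cycle has rank $2$, so $A_m\not\le\ker\rho$, and then that nontrivial normal subgroups of $S_m$ contain $A_m$), together with the fact that transvections in characteristic $2$ square to the identity. This is a worthwhile explicit step. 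Your rank computations and the parity check in the fixed-point-free case (that $e\in(g-1)(H)$ iff $m/2$ is even) agree with the paper's bookkeeping, and the conclusion is the same.
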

\begin{proof}
    Let $e_1, \dots, e_m$ be the standard basis of $K^m$. Define $e_i^* \in (K^m)^*$ by $e_i^*(e_j) = \delta_{ij}$.
    Let $g = (i,j) \in G$. Then $g$ acts as $1 + (e_i + e_j) \otimes (e_i^* + e_j^*)$ on $K^m$, which descends to a transvection on $V$, so $\rho(g)$ is a transvection.
    Conversely, suppose $\rho(g)$ is a transvection. Then $g$ is an involution, so $g$ has cycle type $2^k$ for some $k$,
    say $g = (1,k+1)(2,k+2)\cdots(k,2k)$ without loss of generality.
    Then $g$ acts as $1 + \sum_{i=1}^k (e_{i} + e_{k+i}) \otimes (e_{i}^* + e_{k+i}^*)$ on $K^m$.
    The subspace $(g-1)(H)$ consists of all vectors $x \in K^m$ such that $x_{i} = x_{k+i}$ for $1 \le i \le k$ and such that $x_i = 0$ for $i > 2k$,
    and moreover such that $x_1 + \cdots + x_k = 0$ if $k = m/2$,
    so $\dim (g-1)(H) = k$ if $k < m/2$ and $\dim (g-1)(H) = k-1$ if $k = m/2$.
    Also $(g-1)(H) \cap \ell = 0$ unless $k = m/2$ and $k$ is even.
    Thus $(g-1)|V$ has rank $k$ unless $k = m/2$, in which case it has rank $k-1$ if $k$ is odd and $k-2$ if $k$ is even.
    Therefore in order that $g$ acts a transvection on $V$ it is necessary that $k = 1$.
\end{proof}

In the rest of this subsection, we consider the following question. Let $K = \F_2$ and $V = K^n$, where $n$ is even. Given $T \subset \tv \cap \Sp_n(2)$ such that $G = \gen T$ is irreducible, how can we certify that $G$ is not contained in a group of symmetric type?

\begin{lemma}
    \label{lem:odd-sym-type}
    The following are equivalent:
    \begin{enumerate}
        \item $G$ has odd symmetric type,
        \item there is a $G$-invariant spanning set $B \subset V$ of cardinality $n+1$.
    \end{enumerate}
    Moreover $B$ is uniquely determined by $G$.
\end{lemma}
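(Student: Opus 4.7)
My plan is to handle (1) $\Rightarrow$ (2) by an explicit construction, (2) $\Rightarrow$ (1) by analysing the permutation action of $G$ on $B$ together with the syzygy on $B$, and uniqueness via an orbit-size count. For (1) $\Rightarrow$ (2), with $G \cong S_m$ for odd $m$ and $V = H \subseteq \F_2^m$, the candidate is $B = \{e_i + e : 1 \le i \le m\}$: each $e_i + e$ has even Hamming weight $m-1$ and so lies in $H = V$; $B$ is a single $G$-orbit of size $m = n+1$; and the pairwise sums $b_i + b_j = e_i + e_j$ span $H$, so $B$ does too.

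For (2) $\Rightarrow$ (1), I would introduce the injective action map $\rho : G \to \Sym(B) \cong S_{n+1}$ (injective because $B$ spans $V$) and the surjective linear map $\pi : M := \F_2^B \to V$ sending $e_b$ to $b$, whose kernel is one-dimensional. The first key step is to show $B$ is a single $G$-orbit: for each $G$-orbit $O \subseteq B$ the vector $\sum_{b \in O} b \in V$ is $G$-invariant, hence zero by irreducibility of $G$, so $\sum_{b \in O} e_b \in \ker \pi$; since these vectors for distinct orbits are linearly independent and $\dim \ker \pi = 1$, there is exactly one orbit and $\ker \pi = \langle e_M \rangle$ with $e_M := \sum_{b \in B} e_b$. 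Next, $|B| = n+1$ must be odd: otherwise $e_M$ lies in the augmentation ideal $H_B \subseteq M$, and $H_B / \langle e_M \rangle$ is a proper nonzero codimension-one $G$-invariant subspace of $V$, contradicting irreducibility. With $|B|$ odd, $V \cong H_B$ as $S_{n+1}$-modules, placing us in the setting of \Cref{lem:transpositions-transvections} (applied to the full $S_{n+1}$), which identifies the transvections in $G$ with transpositions in $S_{n+1}$. Hence $G$ is generated by transpositions, and by \Cref{prop:transposition-graph} it is the direct product of the symmetric groups on the components of its transposition graph. But $G$ is transitive on $B$ by the first step, so the graph has only one component and $G = \Sym(B) = S_{n+1}$ with $n+1$ odd --- that is, $G$ has odd symmetric type.

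For uniqueness, with $G \cong S_m$ of odd symmetric type, the $G$-orbits on $V = H$ are the even-weight classes, of sizes $\binom{m}{0}, \binom{m}{2}, \ldots, \binom{m}{m-1}$. For $m \ge 7$ the equation $\binom{m}{2k} = m$ has the unique solution $2k = m-1$, so the weight-$(m-1)$ orbit $\{e_i + e\}$ is the only $G$-orbit of size $n+1$; and the argument above showed that any $B$ satisfying (2) must be a single $G$-orbit of size $n+1$, so $B$ is uniquely determined.

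The main obstacle I anticipate is the orbit-sum step forcing $B$ to be a single orbit: it elegantly combines $V^G = 0$ with $\dim \ker \pi = 1$, but requires care to exclude small pathologies (for instance $B$ containing $0$ as a singleton orbit, which is already ruled out because a nonzero singleton orbit would have to lie in $V^G$). Once that step is secured, identifying $G$ as $S_{n+1}$ reduces cleanly to the pre-existing material on transposition graphs and the correspondence between transpositions and transvections in the natural permutation representation.
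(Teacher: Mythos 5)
Your proposal is correct. The implications (1)$\Rightarrow$(2) and (2)$\Rightarrow$(1) follow essentially the same route as the paper: the same explicit $B = \{e_i + e\}$, the same observation that irreducibility forces the sum over any $G$-invariant subset to vanish (hence a single orbit), and the same reduction to \Cref{lem:transpositions-transvections} and \Cref{prop:transposition-graph} after identifying $V$ with a permutation module. You realize that identification via $M = \F_2^B \twoheadrightarrow V$ with $\ker\pi = \langle e_M\rangle$ (so $V \cong H_B$ when $|B|$ is odd), whereas the paper extends $V$ by the trivial module $\ell$ and shifts $B$ to $\Omega = \{b_i + e\}$, obtaining $\F_2^\Omega$; these are the same move seen from opposite ends, and both rely on $n+1$ being odd, which you derive from irreducibility and which the paper takes from the subsection's ambient hypothesis that $n$ is even.

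The genuine departure is in the uniqueness argument. The paper argues that for two invariant spanning sets $B, B'$ of cardinality $n+1$ one has $G = \Sym(B) = \Sym(B')$, invokes $\Aut(S_m) = S_m$ (for odd $m \neq 2$) to get a $G$-equivariant bijection $f\colon B \to B'$, and then compares $\im(\tau - 1)$ for transpositions $\tau$ to show $f$ is the identity. You instead count $G$-orbits on $V = H$: they are the even-weight classes of sizes $\binom{m}{2k}$, and $\binom{m}{2k} = m$ has the unique even solution $2k = m-1$, so there is exactly one orbit of size $n+1$ and $B$ must be it. Your argument is shorter and avoids the automorphism-group input, at the (minor) cost of a unimodality estimate on binomial coefficients; both work for the $m \ge 7$ range relevant here. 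One small cleanup: your parenthetical about excluding $0 \in B$ conflates two cases — a nonzero singleton orbit is killed directly by $V^G = 0$, while the singleton orbit $\{0\}$ is excluded because together with the orbit sum $e_M$ it would force $\dim\ker\pi \ge 2$; but the surrounding kernel-dimension argument already handles both correctly.
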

\begin{proof}
    \emph{(1) $\implies$ (2):} If $G = S_{n+1}$ acting on the hyperplane $H \le \F_2^{n+1}$ then $B = \left\{\sum_{i \ne j} e_i : j = 1, \dots, n+1 \right\}$ is a $G$-invariant spanning set of cardinality $n+1$.

    \emph{(2) $\implies$ (1):}
    Let $B = \{b_1, \dots, b_{n+1}\}$.
    Then $b_1 + \cdots + b_{n+1} \in V^G = 0$, since $G$ is irreducible.
    Since $|B| = n+1$, this is the unique nontrivial linear relation among the elements of $B$.
    In particular $G$ must be transitive on $B$, for if $B_0 \subset B$ is $G$-invariant then $\sum_{b \in B_0} b = 0$ by the same argument.

    Let $W = V \oplus \ell$ where $\ell = \{0,e\}$ is a copy of the trivial $1$-dimensional $G$-module $K^1$. Let $b'_i = b_i + e$ for $1 \le i \le n+1$ and let $\Omega = \{b'_1, \dots, b'_{n+1}\}$. Then $\Omega$ is $G$-invariant and
    \[
        b'_1 + \cdots + b'_{n+1} = b_1 + \cdots + b_{n+1} + (n+1) e = e.
    \]
    It follows that $\Omega$ spans $W$, so $\Omega$ is a $G$-invariant basis of $W$. Thus we may identify $W$ with $K^\Omega$, $V$ with the hyperplane $H \le K^\Omega$, and $G$ with a subgroup of $\Sym(\Omega)$.
    Hence $G$ is a transitive subgroup of $\Sym(\Omega)$ generated by transpositions (by \Cref{lem:transpositions-transvections}), so $G = \Sym(\Omega)$ by \Cref{prop:transposition-graph}.

    Now we argue that $B$ is uniquely determined.
    Assume $G$ has odd symmetric type and $B,B' \subset \F_2^n$ are two $G$-invariant spanning sets of cardinality $n+1$.
    The above argument shows that $G|B = \Sym(B)$ and $G|B' = \Sym(B')$.
    Since $\Aut(S_m) = S_m$ for $m \ne 2,6$ and $|B|$ is odd, there is a $G$-equivariant bijection $f : B \to B'$.
    Let $\tau \in G$ be a transposition, say transposing $b, b' \in B$. Then $\im(\tau -1) = \gen{b + b'}$.
    Also $\tau$ transposes $f(b), f(b')$, so $\im(\tau - 1) = \gen{f(b) + f(b')}$.
    It follows that $b+b' = f(b)+f(b')$ for any two $b, b' \in B$. Since $|B|$ is odd, we deduce that
    \[
        b = \sum_{b'} (b+b') = \sum_{b'} (f(b) + f(b')) = f(b)
    \]
    for all $b \in B$. Thus $B = B'$.
\end{proof}

The main result of this subsection is the following proposition.

\begin{proposition}
    \label{prop:sym-type-certificate}
    Assume $T \subset \tv \cap \Sp_n(2)$ is dense and $n \ge 2000$.
    The following are equivalent:
    \begin{enumerate}
        \item $G$ has symmetric type;
        \item for every subset $T_0 \subset T$ of cardinality $|T_0| \le 1000$ such that $G_0 = \gen{T_0}$ acts irreducibly on $V_0 = [V, G_0]$, the restriction $G_0 | V_0$ has odd symmetric type.
    \end{enumerate}
\end{proposition}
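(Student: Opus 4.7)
The proof is an equivalence; my plan is to establish the two directions separately, with direction (1) implies (2) being essentially combinatorial and direction (2) implies (1) requiring the classification together with the density and connectivity machinery developed in the preceding sections.

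For (1) implies (2), I would suppose $G = S_m$ with $m \in \{n+1, n+2\}$, so that by \Cref{lem:transpositions-transvections} the elements of any $T_0 \subset T$ correspond to transpositions in $S_m$. Since $m \ge n+1 \ge 2001 > 2|T_0|$, the union of their supports omits at least one point of $[m]$, so $G_0$ fixes that point. By \Cref{prop:transposition-graph}, $G_0 \cong \prod_\ell S_{\Omega_\ell}$ is a direct product indexed by the components $\Omega_1, \dots, \Omega_k$ of the transposition graph of $T_0$. Because $\bigcup_\ell \Omega_\ell$ is a proper subset of $[m]$, the canonical line $\ell_m \le \F_2^m$ does not meet $\bigoplus_\ell H_{\Omega_\ell}$, so $V_0 = [V, G_0]$ decomposes as $\bigoplus_\ell H_{\Omega_\ell}$ where each $H_{\Omega_\ell}$ is the sum-zero hyperplane in $\F_2^{\Omega_\ell}$. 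Irreducibility of $G_0|V_0$ then forces both $k = 1$ and $|\Omega_1|$ odd (otherwise the line $\ell_{\Omega_1} \le H_{\Omega_1}$ is a proper $S_{\Omega_1}$-submodule), giving $G_0|V_0 \cong S_{|\Omega_1|}$ with $|\Omega_1|$ odd, i.e., odd symmetric type.

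For (2) implies (1), I would argue the contrapositive: assuming $G$ is not of symmetric type, I exhibit $T_0 \subset T$ with $|T_0| \le 1000$ such that $G_0|V_0$ is irreducible but not of odd symmetric type. Since $G \le \Sp_n(2)$ is irreducible and generated by transvections with defining field $\F_2$, and $n \ge 2000$ rules out the small exceptional cases, \Cref{thm:classification} forces $G \in \{\Sp_n(2), \Or_n^+(2), \Or_n^-(2)\}$: monomial type is impossible over $\F_2$ (it would require $a > 1$ dividing $q - 1 = 1$), and unitary type requires a defining field strictly larger than $\F_2$. My plan is to construct $T_0$ so that $G_0|V_0$ is an irreducible classical group (symplectic or orthogonal) of dimension at least $10$; the coincidence table ($S_3 \cong \SL_2(2)$, $S_5 \cong \Or^-_4(2)$, $S_6 \cong \Sp_4(2)$, $S_8 \cong \Or^+_6(2)$) shows that no such $G_0|V_0$ coincides with a symmetric group, so it is not of symmetric type, in particular not of odd symmetric type. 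The construction has four steps: (i) pick a seed $T_0$ with $\dim V(T_0) \ge 10$; (ii) if $G$ is symplectic but not orthogonal, append a bounded-size non-quadratic linear relation furnished by \Cref{prop:orthogonal} to witness the failure of any invariant quadratic form, or if $G$ is orthogonal, append cycles certifying the specific quadratic form; (iii) apply \Cref{prop:connect-up} and \Cref{prop:winkle} to make $\Gamma(T_0)$ strongly connected and $T_0$ weakly nondegenerate, adding at most $O(1)$ transvections; (iv) invoke \Cref{prop:irreducible-section} or \Cref{prop:tv-in-SL-sections} to extract irreducibility of $G_0|V_0$.

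The principal obstacle is ensuring that $G_0|V_0$ is truly classical rather than a symmetric-type subgroup: $\Sp(V_0)$ contains $S_{m_0}$-subgroups for $m_0 \in \{n_0+1, n_0+2\}$, so large dimension and preservation of the symplectic form do not by themselves rule out $G_0|V_0 = S_{m_0}$ (and when $m_0 \equiv 2 \pmod 4$, even $S_{m_0}$ does not preserve any quadratic form, so the non-quadratic certificate fails to distinguish). To handle this I would use density to augment $T_0$ with a transvection $t^* \in T$ whose $v$-vector cannot be written as $b + b'$ for $b, b'$ in any candidate $G_0$-invariant spanning set $B_0 \subset V_0$ of cardinality at most $n_0 + 2$ (the characterization coming from \Cref{lem:odd-sym-type} and its analog for even symmetric type). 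Because the pairwise-sum set $B_0 + B_0$ has only polynomial size in $n_0$ while density furnishes a rich supply of transvections with distinct $v$-vectors, such a $t^*$ exists; including it in $T_0$ (and reapplying \Cref{prop:connect-up} and \Cref{prop:winkle} if needed) keeps $|T_0|$ bounded while certifying that $G_0|V_0$ is not of symmetric type, completing the proof.
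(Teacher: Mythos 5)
Your treatment of the direction $(1) \Rightarrow (2)$ is correct and essentially matches the paper's, with a cosmetic variation: you observe that $G_0$ fixes a point (so $e \notin \bigoplus_\ell H_{\Omega_\ell}$), whereas the paper notes that $e \in W$ would force every component of $\Gamma_0$ to have even size and hence $|T_0| > 1000$. Both routes give $W \cap \ell = 0$ and then conclude as you do.

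For $(2) \Rightarrow (1)$, your contrapositive structure is genuinely different from the paper's proof and contains a real gap. The paper proves $(2) \Rightarrow (1)$ \emph{directly and constructively}: it assumes hypothesis (2) outright, uses it to associate to each small admissible $T_0$ a canonical set $B_0$ via \Cref{lem:odd-sym-type}, defines $\Delta = \bigcup \Delta(T_0)$ and a graph $(\Delta, E)$, shows that maximal cliques $\Omega$ of $(\Delta, E)$ intersect pairwise in singletons, proves $G|\Omega = \Sym(\Omega)$, and finally constructs an explicit surjection $H \to V$ from a permutation module. Your proposal instead argues the contrapositive by attempting to exhibit a single $T_0$ with $|T_0| \le 1000$ such that $G_0|V_0$ is irreducible classical (hence not odd symmetric). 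The obstacle you correctly identify --- that a small $G_0|V_0$ preserving the inherited symplectic form may itself be of symmetric type --- is precisely the crux of the problem, and your proposed fix does not resolve it. You want to append a $t^*$ whose $v$-vector avoids $B_0 + B_0$ for ``any candidate $G_0$-invariant spanning set $B_0$,'' but $G_0$, $V_0$, and hence the candidate sets $B_0$ all depend on the final $T_0$ which includes $t^*$; there is no fixed ambient set $B_0 + B_0$ to avoid. Nor can you appeal to \Cref{prop:classical-certificate}, since its proof invokes this very proposition. You would need a non-circular argument showing that, starting from the family of invariant spanning sets $B_0$ associated to small admissible subsets, the group $G$ itself inherits a symmetric structure --- which is what the paper's clique construction achieves.
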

\begin{proof}
    \emph{(1) $\implies$ (2):}
    Let $T_0 \subset T$ be a set of cardinality $|T_0| \le 1000$ such that $G_0 = \gen {T_0}$ acts irreducibly on $V_0 = [V, G_0]$.
    We can identify $G$ with $S_m$ for $m \in \{n+1,n+2\}$ and $V$ with the section $H / (\ell \cap H)$ of the standard permutation module $K^m$.
    Then $T \subset \Sym(\Omega)$ is a set of transpositions.
    Let $\Gamma_0 = \Gamma(T_0)$ be the transposition graph defined by $T_0$.
    Then $G_0 = \gen{T_0}$ is the direct product of the full symmetric groups on each component of $\Gamma_0$, and $V_0$ is the image in $V$ of the space $W$ of all $x \in K^m$ such that $\sum_{i \in C} x_i = 0$ for each component $C$ of $\Gamma_0$.
    If $e \in W$ then every component of $\Gamma_0$ has even cardinality and $|T_0| \ge (n+2) / 2 > 1000$, contrary to hypothesis.
    Therefore $W \cap \ell = 0$ and $V_0 \cong W$ as $G_0$-modules.
    Since $G_0 | V_0$ is irreducible, it must be that $\Gamma_0$ has only one non-singleton component, and it must have odd order. Thus $G_0 | V_0$ has odd symmetric type.

    \def\A{\mathcal{A}}
    \emph{(2) $\implies$ (1):}
    Call $T_0 \subset T$ \emph{admissible} if $G_0 = \gen{T_0}$ acts irreducibly on $V_0 = [V, G_0]$.
    Let $\A_t$ be the collection of admissible subsets $T_0 \subset T$ of cardinality $|T_0| \le t$.
    By hypothesis, $G_0|V_0$ has odd symmetric type for every $T_0 \in \A_{1000}$.
    In this case by \Cref{lem:odd-sym-type} there is a unique $G_0$-invariant spanning set $B_0 \subset V_0$ of cardinality $\dim V_0 + 1$.
    Define
    \[
        \Delta(T_0) = \{b + b' : b, b' \in B_0, b \ne b'\} \qquad (T_0 \in \A_{1000}).
    \]
    Throughout this proof we will refer to the subgroup $G_0 \le G$, the subspace $V_0 \le V$, and the subset $B_0 \subset V_0$ constructed from admissible sets $T_0$ in this way.

    Suppose $T_0 \subset T_1$ where $T_0, T_1 \in \A_{1000}$.
    Correspondingly define $G_i$, $V_i$, $B_i$ for $i = 0,1$.
    Then $G_0 \le G_1$, $V_0 \le V_1$, and from our analysis of the implication \emph{(1) $\implies$ (2)} we know that $B_0$ is related to $B_1$ in the following rigid way: there is a $G_0$-equivariant injection $\iota : B_0 \to B_1$ such that
    \[
        b = \iota(b) + \sum_{b' \in B_1 \sm \iota(B_0)} b' \qquad\qquad (b \in B_0).
    \]
    Also $G_0$ acts trivially on $B_1 \sm \iota(B_0)$. In particular $b+b' = \iota(b) + \iota(b')$ for $b,b' \in B_0$, so $\Delta(T_0) \subset \Delta(T_1)$.

    Define
    \[
        \Delta = \bigcup_{T_0 \in \A_{400}} \Delta(T_0).
    \]
    We claim that, for every $v \in \Delta$, there is in fact some $T_1 \in \A_6$ such that $v \in \Delta(T_1)$.
    Suppose $v \in \Delta(T_0)$ for $T_0 \in \A_{400}$.
    Then $v = b_0 + b_0'$ for some $b_0, b_0' \in B_0$.
    By consideration of the transposition graph defined by $T_0 \subset G_0 \cong \Sym(B_0)$, there are $t, t' \in T_0$ that move $b_0, b_0'$, respectively.
    By \Cref{prop:winkle}, $\{t,t'\}$ is contained in some $T_1 \in \A_6$, and $T_0 \cup T_1$ is contained in some $T_2 \in \A_{808}$ (note that $|T_0 \cup T_1| \le 404$ and $\Gamma(T_0\cup T_1)$ is strongly connected).
    Consider the corresponding maps $\iota_0 : B_0 \to B_2$ and $\iota_1 : B_1 \to B_2$.
    Since $t$ acts as a transposition on each of $B_0, B_1, B_2$, it must be that $\iota_0(b_0) = b_2 = \iota_1(b_1)$ for some $b_2 \in B_2$ and $b_1 \in B_1$, and likewise $\iota_0(b'_0) = b'_2 = \iota_1(b'_1)$ for some $b'_2 \in B_2$ and $b'_1 \in B_1$, whence
    \[
        v = b_0 + b'_0 = b_2 + b_2' = b_1 + b_1' \in \Delta(T_1).
    \]
    This proves our claim.

    In particular it follows that $\Delta$ is $G$-invariant.
    Indeed, it suffices to prove that if $v \in \Delta$ and $t \in T$ then $t(v) \in \Delta$.
    By the previous paragraph, $v \in \Delta(T_0)$ for some $T_0 \in \A_6$.
    By \Cref{prop:connect-up,prop:winkle}, $T_0 \cup \{t\}$ is contained in some $T_1 \in \A_{16}$.
    Since $\Delta(T_1)$ is $G_1$-invariant and $t \in G_1$, $t(v) \in \Delta(T_1) \subset \Delta$.

    Similarly, $G$ is transitive on $\Delta$. Consider $v,v' \in \Delta$. Then $v \in \Delta(T_0)$ and $v' \in \Delta(T_0')$ for $T_0, T_0' \in \A_6$.
    By \Cref{prop:connect-up,prop:winkle}, $T_0 \cup T_0'$ is contained in some $T_1 \in \A_{26}$.
    We have $v, v' \in \Delta(T_0) \cup \Delta(T_0') \subset \Delta(T_1)$.
    Since $G_1 = \gen{T_1} \cong \Sym(B_1)$ is transitive on $\Delta(T_1)$, it follows that some element of $G_1$ maps $v$ to $v'$, and $G_1 \le G$.

    Next we define a symmetric relation $E$ on $\Delta$ by
    \[
        E = \{(u,v) \in \Delta^2 : u + v \in \Delta\}.
    \]
    Plainly $E$ is $G$-invariant, so $G$ acts on the graph $(\Delta, E)$ by automorphisms.
    Moreover we can describe the relation $E$ another way as follows.
    Suppose $(u,v) \in \Delta^2$ and suppose $u,v \in \Delta(T_0)$ for some $T_0 \in \A_{400}$.
    Then $u = b_1 + b_2$ and $v = b_3 + b_4$ for some $b_1,b_2,b_3,b_4 \in B_0$.
    We claim that $(u,v) \in E$ if and only if $|\{b_1,b_2\} \cap \{b_3,b_4\}| = 1$.
    Indeed, if say $b_1=b_3$ but $b_2 \ne b_4$ then $u + v = b_2 + b_4 \in \Delta(T_0) \subset \Delta$.
    Conversely, suppose $w = u + v \in \Delta$.
    Then $w \in \Delta(T_1)$ for some $T_1 \in \A_6$.
    By \Cref{prop:connect-up,prop:winkle} there is a set $T_2 \in \A_{814}$ containing $T_0 \cup T_1$.
    Since $u,v,w \in \Delta(T_0) \cup \Delta(T_1) \subset \Delta(T_2)$,
    each of $u,v,w$ is the sum of two distinct elements of $B_2$.
    Moreover the expressions for $u$ and $v$ are $u = \iota(b_1) + \iota(b_2)$ and $v = \iota(b_3) + \iota(b_4)$, where $\iota : B_0 \to B_2$ is the usual map.
    If $b_1,b_2,b_3,b_4$ are all distinct then $w = \iota(b_1) + \iota(b_2) + \iota(b_3) + \iota(b_4)$ is not the sum of two elements of $B_2$, since the unique nontrivial linear relation among the elements of $B_2$ is $\sum_{b \in B_2} b = 0$ and $|B_2|$ is odd.
    Therefore $|\{b_1,b_2\} \cap \{b_3,b_4\}| = 1$, as claimed.

    Let $\Omega$ be the set of maximal cliques $C$ of $(\Delta, E)$ of cardinality $|C| > 3$, and note that $\Omega$ inherits a $G$-action from $(\Delta, E)$.

    We claim that each clique $C \in \Omega$ is determined by any two of its elements, i.e., $|C \cap C'| \le 1$ for distinct $C, C' \in \Omega$.
    Suppose $C, C' \in \Omega$ share two vertices $v_1, v_2 \in \Delta$.
    We will show that $C \cup C'$ is a clique, whence $C = C' = C \cup C'$ by maximality.
    Let $v_3, v_4 \in C$ and $v_3', v'_4 \in C'$ be distinct from $v_1, v_2$.
    By \Cref{prop:connect-up,prop:winkle}, there is some $T_0 \in \A_{82}$ such that $v_1, v_2, v_3, v_4, v_3', v_4' \in \Delta(T_0)$.
    By our alternative characterization of $E$, each $v_i$ and $v'_i$ corresponds to a $2$-subset of $B_0$ in such a way that adjacent vertices correspond to overlapping subsets.
    Since $\{v_1,v_2,v_3,v_4\}$ is a clique, we must have $v_i = b_0 + b_i$ for $i = 1,2,3,4$ for some $b_0,b_1,b_2,b_3,b_4 \in B_0$,
    and similarly we must have $v'_3 = b_0 + b'_3$ and $b'_4 = b_0 + b'_4$.
    Thus $v_3 + v'_3 = b_3 + b'_3 \in \Delta \cup \{0\}$, and since $v_3$ and $v'_3$ were arbitrary this implies that $C \cup C'$ is a clique, as claimed.

    In fact we claim that $|C \cap C'| = 1$ for any two distinct cliques $C, C' \in \Omega$.
    Let $v_1, v_2, v_3, v_4 \in C$ be distinct and let $v'_1, v'_2, v'_3, v'_4 \in C'$ be distinct.
    By maximality and distinctness of $C$ and $C'$, we may assume that $v_1\ne v'_1$ and $(v_1,v'_1) \notin E$.
    By the usual arguments, $v_1,v_2,v_3,v_4,v'_1,v'_2,v'_3,v'_4 \in \Delta(T_0)$ for some $T_0 \in \A_{110}$.
    As above, $v_1, v_2,v_3,v_4$ correspond to $2$-subsets of $B_0$ overlapping in pairs, so we must have $v_i = b_0 + b_i$ for $i = 1,2,3,4$ and some $b_0,b_1,b_2,b_3,b_4 \in B_0$, and similarly $v'_i = b'_0 + b'_i$ for $i = 1,2,3,4$ for some $b'_0,b'_1,b'_2,b'_3,b'_4 \in B_0$.
    Let $w = b_0 + b'_0$, and observe that $w \ne 0$ since $v_1 \ne v'_1$ and $(v_1, v'_1) \notin E$.
    Then $\{v_1,v_2,v_3,v_4,w\}$ is a clique of cardinality at least $4$ containing $v_1,v_2$, so it must be contained in $C$.
    This shows $w \in C$ and symmetrically $w \in C'$, so $w \in C \cap C'$.

    We make one further observation. Let $C, C', C'' \in \Omega$ be distinct cliques.
    Let $w \in C \cap C'$ and $w' \in C' \cap C''$. Then
    \begin{equation}
        \label{eq:w+w'}
        w + w' \in C \cap C''.
    \end{equation}
    Indeed, let $v_i \in C, v'_i \in C', v''_i \in C''$ for $i=1,2,3,4$ as above.
    Find $T_0 \in \A_{166}$ such that $v_i,v'_i,v''_i \in \Delta(T_0)$ for $i=1,2,3,4$.
    Then we must have representations of the form $v_i = b_0 + b_i, v'_i = b'_0 + b'_i, v''_i = b''_0 + b''_i$, and $w = b_0 + b'_0$, $w' = b'_0 + b''_0$, and $w'' = b_0 + b''_0$.

    Finally, we claim that $G | \Omega = \Sym(\Omega)$. It suffices to prove that for any two distinct $C, C' \in \Omega$, there is an element of $G$ transposing $C$ and $C'$ and fixing every other element of $\Omega$.
    Exactly as above let $v_1,v_2,v_3,v_4 \in C$ be distinct,
    let $v'_1,v'_2,v'_3,v'_4 \in C'$ be distinct,
    and find $T_0 \in \A_{110}$ such that $v_i,v'_i \in \Delta(T_0)$ for $i=1,2,3,4$.
    We have $v_i = b_0 + b_i$ and $v'_i = b'_0 + b'_i$ ($i=1,2,3,4$) for some $b_i,b'_i \in B_0$ ($i = 0,1,2,3,4$).
    Since $G_0 \cong \Sym(B_0)$, there is some $t \in G_0$ transposing $b_0$ and $b'_0$ and fixing every other element of $B_0$.
    Observe that $\{v_1,v_2,v_3,v_4, t(v'_1), t(v'_2), t(v'_3), t(v'_4)\}$ is a clique.
    Since $C$ is the unique maximal clique containing $v_1,v_2,v_3,v_4$ and $t(C')$ is the unique maximal clique containing $v'_1,v'_2,v'_3,v'_4$, it follows that $t(C') = C$, and similarly $t(C) = C'$.
    Now consider any other $C'' \in \Omega$ and let $v''_1,v''_2,v''_3,v''_4 \in C''$.
    We can find $T_1 \in \A_{276}$ containing $T_0$ such that $v''_1,v''_2,v''_3,v''_4 \in \Delta(T_1)$,
    and as above we must have $v''_i = b''_0 + b''_i$ ($i=1,2,3,4$) for some $b''_i \in B_1$ ($i=0,1,2,3,4$).
    If say $b''_0 = \iota(b_0)$ then $\{v_1,v_2,v''_1,v''_2\}$ is a clique, which implies $C'' = C$.
    Likewise $b''_0 = \iota(b'_0)$ implies $C'' = C'$.
    Otherwise $b''_0$ is fixed by $t$, so $\{v''_1,v''_2,t(v''_1),t(v''_2)\}$ is a clique, which implies $C'' = t(C'')$, and this is what we wanted to prove.

    At last we can complete the proof. Consider the $G$-space $K^\Omega$ and let $H \le K^\Omega$ be the hyperplane $\{x \in K^\Omega : \sum_{C \in \Omega} x_C = 0\}$.
    Fix any $C_0 \in \Omega$ and observe that $\{e_{C_0} + e_C : C \ne C_0\}$ is a basis of $H$.
    Define $f : H \to V$ by sending $e_{C_0} + e_C$ to the unique element of $C_0 \cap C$ (recalling that the elements of $\Omega$ are by definition cliques of $(\Delta, E$)) and linearly extending to $H$.
    It follows from \eqref{eq:w+w'} that $f$ sends $e_{C} + e_{C'}$ to the unique element of $C \cap C'$ for all $C, C' \in \Omega$, $C \ne C'$.
    Hence $f$ is a nonzero $G$-module homomorphism, and it must be surjective since $V$ is irreducible.
    Thus $G$ has symmetric type, as claimed.
    \end{proof}

    \begin{remark}
    In the above proof, it follows a posteriori that $|\Omega| \in \{n+1,n+2\}$, and $G$ has odd type or even type according to these two cases.
    \end{remark}

\subsection{Monomial subgroups}

Let $K = \F_q$ be a finite field of characteristic $2$ and let $a$ be a divisor of $q-1$. Let $M_n(a)$ be the subgroup of $\SL_n(q)$ consisting of those elements which are represented in the standard basis by monomial matrices with all nonzero entries being $a$th roots of unity. Then $M_n(a) \cong C_a^{n-1} \rtimes S_n$ and $M_n(a)$ is a subgroup of $\SL_n(q)$ generated by transvections, irreducible for $a > 1$. The transvections in $M_n(a)$ are the elements represented as
\[
    \begin{pmatrix}
    0 & x^{-1} & 0 \\
    x & 0 & 0 \\
    0 & 0 & I_{n-2}
    \end{pmatrix}
\]
up to permuting axes.

In general if $G \le \GL(V)$, a \emph{$G$-invariant monomial structure} on $V$ is a set of lines $L = \{Kb_1, \dots, Kb_n\}$ permuted by $G$ such that $b_1, \dots, b_n$ is a basis for $V$.

\begin{lemma}
%    \label{lem:unique-monomial-structure}
    If $a > 1$ there is a unique $M_n(a)$-invariant monomial structure on $V = K^n$.
\end{lemma}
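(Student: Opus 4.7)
The plan is to study the action of the diagonal normal subgroup $D = C_a^{n-1} \trianglelefteq M_n(a)$ on an arbitrary candidate monomial structure $L = \{Kb_1, \dots, Kb_n\}$ and deduce by an orbit-count that $L$ must coincide with the standard structure $\{Ke_1, \dots, Ke_n\}$. Existence of the standard structure is immediate, so I focus on uniqueness. The main leverage comes from the fact that $a$ is odd (forced by $p = 2$ and $a \mid q-1$) and $a \ge 3$.

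First I will verify that $D$ acts semisimply on $V$ and that the $n$ characters $\chi_i : \opr{diag}(\lambda_1, \dots, \lambda_n) \mapsto \lambda_i$ are pairwise distinct: for $i \ne j$, take $\lambda_i = \omega$, $\lambda_j = \omega^{-1}$, all other $\lambda_k = 1$, where $\omega$ is a primitive $a$th root of unity, noting $\omega \ne \omega^{-1}$ since $a \ge 3$. Consequently the $D$-eigenlines in $V$ are exactly $Ke_1, \dots, Ke_n$. Writing $b_j = \sum_{i \in S_j} c_i e_i$ with $S_j = \opr{supp}(b_j)$ of size $s_j$, a direct diagonal computation shows that each element of $D$ preserves $S_j$ and that the stabilizer of $Kb_j$ in $D$ is the subgroup on which the $\lambda_i$ are constant over $i \in S_j$, of order $a^{n-s_j}$. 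Hence the $D$-orbit of $Kb_j$ has size $a^{s_j - 1}$ and every line in the orbit has support $S_j$.

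Next I will exploit the semidirect decomposition $M_n(a) = D \rtimes \Sigma$ with $\Sigma \cong S_n$. Since $V$ is irreducible, $M_n(a)$ is transitive on $L$; and because $\Sigma$ normalizes $D$, it permutes the $D$-orbits on $L$ and indeed acts transitively on the set of $D$-orbits. Consequently all $s_j$ equal a common value $s$, each orbit has size $a^{s-1}$, the number of orbits is $r = n / a^{s-1}$, and the set of distinct supports forms a single $\Sigma$-orbit on $s$-subsets of $[n]$, which for $1 \le s \le n-1$ is the whole of $\binom{[n]}{s}$. Counting supports gives the key constraint $\binom{n}{s} \le r \le n$.

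The remainder is a short case analysis on $s$. When $s = 1$ the orbits are $D$-eigenlines, so $L \subseteq \{Ke_1, \dots, Ke_n\}$ and equality follows from cardinality. The inequality $\binom{n}{s} \le n$ eliminates $2 \le s \le n-2$; the boundary $s = n - 1$ forces $r = n$ and hence $a^{n-2} = 1$, impossible for $a \ge 3$ and $n \ge 3$ (and collapsing to $s = 1$ when $n = 2$); and $s = n$ requires $r \cdot a^{n-1} = n$, impossible for $a \ge 3$ and $n \ge 2$. There is nothing conceptually deep here; the main obstacle is the book-keeping in these boundary cases, and all the real content is concentrated in the distinctness of the characters $\chi_i$ and the ensuing description of $D$-orbits by their supports, both of which come cleanly from $a \ge 3$.
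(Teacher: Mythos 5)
Your proof is correct and complete, but it goes by a genuinely different route than the paper's. You argue combinatorially: after noting (via semisimplicity and distinctness of the characters $\chi_i$, both enabled by $a \ge 3$) that the $D$-eigenlines are exactly $Ke_1,\dots,Ke_n$, you analyse the $D$-orbits on $L$ through the supports $S_j$, show each orbit has size $a^{s-1}$ for a common $s$, derive $\binom{n}{s} \le r = n/a^{s-1} \le n$ from $\Sigma$-transitivity on orbits, and eliminate $s \ge 2$ by counting. The paper instead works directly with the kernel $C_G(L)$ of the $G$-action on $L$: this kernel is a diagonalizable (hence abelian) normal subgroup, and a short structural observation shows every abelian normal subgroup of $C_a^{n-1} \rtimes S_n$ lies in $D$; since $G/C_G(L)$ embeds in $\Sym(L) \cong S_n$, an index count forces $C_G(L) = D$, so every line of $L$ is a joint $D$-eigenline, and distinctness of the $\chi_i$ finishes it. The paper's argument is shorter and more conceptual, avoiding the orbit-counting case analysis entirely, while yours is more elementary and self-contained (no appeal to the normal subgroup structure of $C_a^{n-1} \rtimes S_n$). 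One small remark: your inequality $\binom{n}{s} \le r$ already combines with $r = n/a^{s-1}$ to give $\binom{n}{s} a^{s-1} \le n$, which kills all cases $s \ge 2$ at once (including $s = n-1$ and $s = n$) without the separate boundary analysis, since $\binom{n}{s} a^{s-1} \ge n \cdot 3^{s-1} > n$ for $s \ge 2$; this would streamline your final paragraph.
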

\begin{proof}
    First observe that if $N \nsgp C_a^{n-1} \rtimes S_n$ and $N$ is not contained in $C_a^{n-1}$ then $N$ contains $C_a^{n-1}$.
    Indeed, $N$ contains some element with nontrivial image $\pi$ in $S_n$, so $N$ contains $[C_a^{n-1}, \pi]$.
    Now it follows by a short argument that $N$ must contain all of $C_a^{n-1}$.
    In particular if $N$ is normal and abelian then $N \le C_a^{n-1}$.

    Now let $G = M_n(a) \cong C_a^{n-1} \rtimes S_n$.
    The fact that $\{Ke_1, \dots, Ke_n\}$ is a $G$-invariant monomial structure is obvious from the definition of $G$.
    Suppose $L = \{Kb_1, \dots, Kb_n\}$ is another $G$-invariant monomial structure.
    The kernel $C_G(L)$ of the action of $G$ on $L$ is abelian and normal, so by the above argument it must be contained in the diagonal subgroup $D \cong C_a^{n-1}$ of $G$.
    Moreover $G/C_G(L)$ is isomorphic to a subgroup of $\Sym(L) \cong S_n$; it follows that $C_G(L) = D$.
    Hence $b_1, \dots, b_n$ are joint eigenvectors of $D$.
    However it is straightforward to check that the only joint eigenvectors of $D$ are the scalar multiples of $e_1, \dots, e_n$, so $\{Kb_1, \dots, Kb_n\} = \{Ke_1, \dots, Ke_n\}$.
\end{proof}

Note that $M_n(a)$ does \emph{not} preserve a symplectic form on $V$ if $a>1$.
Indeed, the only bilinear forms invariant on the diagonal subgroup of $M_n(a)$ are those proportional to $\sum_{i=1}^n x_i y_i$, and the only such form that is alternating is the zero form.
By similar reasoning $M_n(a)$ preserves a unitary form if and only if $a \mid q+1$.

\begin{proposition}
    \label{prop:monomial-type-certificate}
    Let $T \subset \tv$ be a dense set of transvections in $\SL_n(q)$.
    Assume that $G$ does not preserve a symplectic form.
    Then the following are equivalent:
    \begin{enumerate}
        \item $G$ has monomial type;
        \item for every subset $T_0 \subset T$ of cardinality $|T_0| \le 1000$,
        \begin{enumerate}[(a)]
            \item if $T_0$ is weakly nondegenerate then it is nondegenerate, and
            \item if $G_0 = \gen{T_0}$ acts irreducibly on $V_0 = [V, G_0]$ then $G_0 | V_0$ has monomial type or odd symmetric type.
        \end{enumerate}
    \end{enumerate}
\end{proposition}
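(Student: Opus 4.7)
The plan is to prove both directions of the equivalence, with $(2) \Rightarrow (1)$ being the substantive one.

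For $(1) \Rightarrow (2)$, I would analyze an arbitrary $T_0 \subset T \subset M_n(a) \cap \tv$ via its projection to $S_n$. Transvections in $M_n(a)$ have the explicit form $1 + (e_i + xe_j) \otimes (e_i^* + x^{-1} e_j^*)$ with $x \in \mu_a$, and project to transpositions $(i,j)$. Let $C_1, \dots, C_r$ be the connected components of the resulting transposition graph. Then $G_0$ factors as $\prod_k G_0^{(k)}$ with each $G_0^{(k)} \le M_{C_k}(a^{(k)})$, where $a^{(k)} \mid a$ is the subgroup of $\mu_a$ generated by the scalars appearing on $C_k$. Straightforward calculation gives $\dim V(T_0^{(k)}) = \dim V^*(T_0^{(k)})$ in each block (equal to $|C_k|$ if $a^{(k)} > 1$, or $|C_k| - 1$ if $a^{(k)} = 1$), so $\dim V(T_0) = \dim V^*(T_0)$ globally, and (a) follows since weak nondegeneracy together with equal dimensions forces nondegeneracy. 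For (b), irreducibility of $G_0 | V_0$ forces a single nontrivial factor: if $a^{(k)} > 1$ this is $M_{|C_k|}(a^{(k)})$ of monomial type, while if $a^{(k)} = 1$ it is $S_{|C_k|}$ on the hyperplane $\{\sum x_i = 0\}$, which is irreducible exactly when $|C_k|$ is odd, giving odd symmetric type.

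For $(2) \Rightarrow (1)$ I would argue by elimination using Kantor's \Cref{thm:classification}, with $n$ large enough to exclude exceptional cases. The hypothesis that $G$ preserves no symplectic form immediately excludes symplectic and orthogonal types. Combined with \Cref{prop:sym-type-certificate} it also excludes symmetric type: if every admissible $T_0 \in \mathcal{A}_{1000}$ had odd symmetric type then $G$ itself would be of symmetric type and hence would preserve a symplectic form, contradicting the hypothesis. To exclude linear and unitary types, I would construct a small admissible $T_0 \subset T$ with $G_0 | V_0$ of the appropriate classical type, using density of $T$ (\Cref{prop:density}) together with the assembly tools \Cref{prop:connect-up} and \Cref{prop:winkle}; such a $T_0$ directly violates (2)(b). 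Condition (2)(a) plays a role specific to the linear case: via \Cref{prop:tv-in-SL-sections} it excludes admissible configurations of the form $G_0 \cong \SL(U) \ltimes U^d$ with $d > 0$, so that any linear-type candidate realizes $\SL$ genuinely on $V_0$.

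The main obstacle I anticipate is constructing the bounded-size linear and unitary certificates explicitly: one must find transvections in $T$ whose cycle weights (in the sense of \Cref{prop:invariant-form}) fail the unitary condition in the unitary case, or force $\dim V(T_0) \ne \dim V^*(T_0)$ in the linear case, while also keeping $G_0$ irreducible on $V_0$ and $|T_0|$ within the bound $1000$. An alternative constructive strategy — gathering the canonical $G_0$-invariant monomial structures $\mathcal{L}(T_0) \subset \mathbb{P}(V_0)$ across monomial admissibles into a $G$-invariant line family $\mathcal{L} \subset \mathbb{P}(V)$, via the compatibility $\mathcal{L}(T_0) \subset \mathcal{L}(T_1)$ for $T_0 \subset T_1$ (which follows since $V_0$ must sit inside a single $G_0$-orbit-span of lines in $\mathcal{L}(T_1)$ and the monomial hypothesis on $G_0|V_0$ forces equality) — is natural, but verifying $|\mathcal{L}| = n$ with linear independence appears to require the classification anyway.
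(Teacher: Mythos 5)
Your direction $(1) \Rightarrow (2)$ matches the paper's: project $T_0$ to a transposition set in $S_n$, decompose $G_0$ as a direct product of $M_{d_i}(b_i)$'s along the components of the transposition graph, and read off the dimension count and irreducibility condition. That part is fine.

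Your direction $(2) \Rightarrow (1)$ has a genuine gap, which you yourself half-identify. Excluding symplectic, orthogonal, and symmetric types is easy (all preserve a symplectic form), but your plan to exclude linear and unitary types by ``constructing a small admissible $T_0$ with $G_0|V_0$ of the appropriate classical type'' is circular. The only tool in the paper for manufacturing a small classical-type certificate is \Cref{prop:classical-certificate}, which comes \emph{after} this proposition and depends on it. More fundamentally, there is no local signature of linear or unitary type available at this stage: a bounded-size $T_0$ with a non-symplectic cycle, a non-unitary cycle, and strongly connected graph can perfectly well generate a monomial group (e.g.\ $M_d(a)$ with $a \nmid q_0+1$ has non-symplectic and non-unitary cycles). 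The only way to certify ``not monomial'' locally is exactly what this proposition is trying to establish, so elimination via Kantor's list cannot get off the ground here.

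The alternative you sketch at the end --- collecting the canonical monomial structures $L_1$ across small admissible $T_1$'s and assembling a $G$-invariant line family $L$ --- is in fact what the paper does, and your worry that proving linear independence ``appears to require the classification anyway'' is unfounded. The paper's independence argument is elementary: given a relation $\sum \lambda_i b_i = 0$, pick a diagonal element $g$ of some monomial $G_1 \le G$ that scales $b_1$ by $x \ne 1$, scales $b_2$ by $x^{-1}$, and (by the compatibility of monomial structures under assembly) fixes every other line of $L$; applying $g - 1$ kills all but two terms of the relation and forces $\lambda_1 = \lambda_2 = 0$. This is where the ``does not preserve a symplectic form'' hypothesis earns its keep: it guarantees via \Cref{prop:unitary-symplectic-certificate} a non-symplectic cycle $T_0$ to seed every admissible set, which in turn forces every such $G_1$ to have $a > 1$ (genuine monomial type rather than odd symmetric type), so a nontrivial diagonal element $g$ exists. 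Condition (2)(a) is used earlier to ensure nondegeneracy of admissible sets so that $V = V_1 \oplus V^{G_1}$ and the previous lemma (uniqueness of the monomial structure on $V_1$) applies; it is not about excluding $\SL(U) \ltimes U^d$ configurations.
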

\begin{proof}
    \emph{(1) $\implies$ (2):}
    Assume $G = M_n(a) \le \SL_n(q)$ and recall $M_n(a) \cong C_a^{n-1} \rtimes S_n$.
    Let $T_0 \subset T$ be a subset (of any cardinality).
    Observe that the image of $T_0$ under the natural map $M_n(a) \to S_n$ is a set of transpositions.
    Let $\Gamma_0$ be the corresponding transposition graph.
    Then by \Cref{prop:transposition-graph} the image of $G_0$ in $S_n$ is the direct product of the full symmetric groups on each component $C$ of $\Gamma_0$.
    The group $G_0$ itself is the direct product of monomial groups $M_d(b)$ for $b \ge 1$,
    say $G_0 = \prod_{i=1}^k M_{d_i}(b_i)$, where $\sum d_i = n$, $d_i \ge 1$, and we take $b_i = 1$ if $d_i = 1$.
    Correspondingly $[V, G_0]$ is the direct sum of subspaces $U_i \le K^{d_i}$, of codimension $1$ if $b_i = 1$ and codimension zero if $b_i > 1$,
    while $V^{G_0}$ is the direct sum of subspaces $W_i \cong K$ if $b_i = 1$ and $W_i = 0$ if $b_i > 1$.
    Therefore $\dim V(T_0) = \dim [V,G_0] = n - \ell$ where $\ell$ is the number of $b_i = 1$
    and also $\dim V^*(T_0) = n - \dim V^{G_0} = n - \ell$.
    Therefore $\dim V(T_0) = \dim V^*(T_0)$, so weak nondegeneracy implies nondegeneracy.

    Now assume $G_0|V_0$ is irreducible. Since $V_0 = \bigoplus_{i: d_i > 1} U_i$ and the $U_i$ are $G_0$-invariant, there must be exactly one $d_i > 1$, say $d_1 > 1$ (i.e., $\Gamma_0$ is connected apart from isolated vertices). Moreover $G_0|V_0$ is monomial type or symmetric type $1$ according to whether $b_1 > 1$ or $b_1 = 1$, and in the latter case $d_1$ must be odd.

    \def\A{\mathcal{A}}
    \emph{(2) $\implies$ (1):}
    Since $G$ does not preserve a symplectic form, by \Cref{prop:unitary-symplectic-certificate} there is a set $T_0$ of cardinality $|T_0| \le 5$ inducing a non-symplectic cycle.
    Let $\A_t$ be the set of subsets $T_1 \subset T$ of cardinality $|T_1| \le t$ containing $T_0$ such that $\Gamma(T_1)$ is strongly connected and $T_1$ is weakly nondegenerate.
    By hypothesis, every $T_1 \in \A_{1000}$ is nondegenerate. Thus $G_1 = \gen{T_1}$ acts irreducibly on $V_1 = [V, G_1]$ and we have a direct sum decomposition $V = V_1 \oplus V^{G_1}$.
    Moreover, since $T_0 \subset G_1$, $G_1$ does not have symmetric type, so it must have monomial type.
    By the previous lemma, there is a unique $G_1$-invariant monomial structure $L_1$ on $V_1$.
    Let $L$ be the union of the sets $L_1$ over all $T_1 \in \A_{100}$.
    Note that $L \ne \emptyset$ since \Cref{prop:winkle} implies $T_0$ is contained in some $T_1 \in \A_{10}$.

    Let $t \in T$. We claim that $t$ transposes some two lines $Kb, Kb' \in L$ and acts trivially on every other line in $L$. By \Cref{prop:connect-up,prop:winkle}, $T_0 \cup \{t\}$ is contained in some $T_1 \in \A_{16}$. The group $G_1$ has monomial type, and therefore the transvections in $G_1$ act as transpositions on $L_1$. Thus $t \in G_1$ transposes some two lines $Kb, Kb' \in L_1$.
    Now let $Kb'' \in L$ be any other line. Then $Kb'' \in L_2$ for some $T_2 \in \A_{100}$, and by \Cref{prop:connect-up,prop:winkle} the set $T_1 \cup T_2$ is contained in some $T_3 \in \A_{236}$.
    Now $G_3|V_3$ is monomial type with unique monomial structure $L_3$ such that $L_1 \cup L_2 \subset L_3$.
    By the structure of transvections in monomial groups, $t$ acts trivially on each line in $L_3 \sm \{Kb,Kb'\}$, including $Kb''$.

    In particular, $L$ is $G$-invariant. To complete the proof it suffices to show that the lines of $L$ are linearly independent, i.e., if $L = \{Kb_1, \dots, Kb_m\}$ then $b_1, \dots, b_m$ are linearly independent.
    Indeed, since $L$ is $G$-invariant it follows by irreducibility that $m=n$ and $L$ is a $G$-invariant monomial structure.
    This shows that $G$ is a subgroup of a group of monomial type, and, by the argument we used to show \emph{(1) $\implies$ (2)}, any irreducible subgroup generated by transvections of a group of monomial type not preserving a symplectic form itself must be of monomial type.

    Suppose
    \[
        \lambda_1 b_1 + \cdots + \lambda_m b_m = 0 \qquad (\lambda_1, \dots, \lambda_m \in K).
    \]
    Say $Kb_1 \in L_1$ for some $T_1 \in \A_{100}$.
    Since $G_1$ has monomial type, $G_1$ contains some element $g$ mapping $b_1 \mapsto xb_1$ ($x \ne 1$) and say $b_2 \mapsto x^{-1}b_2$, and acting trivially on every other line of $L_1$.
    Consider any other line $Kb \in L$. We have $Kb \in L_2$ for some $T_2 \in \A_{100}$ and by \Cref{prop:connect-up,prop:winkle} we have $T_1 \cup T_2 \subset T_3$ for some $T_3 \in \A_{404}$. Since $G_1$ is a monomial-type subgroup of the monomial-type group $G_3$, the element $g$ acts trivially on $Kb$.
    Thus $g(b_1) = xb_1$, $g(b_2) = x^{-1}(b_2)$, and $g(b_i) = b_i$ for $i > 2$. Applying $g-1$, we thus have
    \[
        \lambda_1 (x-1) b_1 + \lambda_2 (x^{-1} -1) b_2 = 0.
    \]
    Since $Kb_1 \cap Kb_2 = 0$, this implies $\lambda_1 = \lambda_2 = 0$. Since this argument works for each index, it follows that $b_1, \dots, b_m$ are linearly independent, as required.
\end{proof}

Summarizing this subsection and the previous one, if $T \subset \tv$ is dense and $G = \gen T$ is classical, we can find a bounded-size certificate $T_0 \subset T$ for the classicality of $G$. This is the content of the next proposition.

\begin{lemma}
    [\cite{GHS}*{Lemma~4.3}]
    \label{lem:L5}
    If $T$ is dense then $L_5(T) = L(T)$.
\end{lemma}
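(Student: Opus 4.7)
The plan is to proceed by induction on the cycle length $k$. The base cases $k \le 5$ are immediate from the definition of $L_5(T)$. For $k \ge 6$, fix a cycle $(t_1, \ldots, t_k)$ in $\Gamma(T)$ with $t_i = 1 + v_i \otimes \phi_i$.

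The workhorse of the induction is the algebraic identity
\[
    w(t_1, \ldots, t_k) \cdot w(t_i, t_j) = w(t_i, t_{i+1}, \ldots, t_j) \cdot w(t_j, t_{j+1}, \ldots, t_{i-1}, t_i),
\]
valid in $K$ for any $1 \le i < j \le k$, which follows by direct expansion using the product formula $w(s_1, \ldots, s_m) = \phi_{s_1}(v_{s_2}) \cdots \phi_{s_m}(v_{s_1})$ and the cyclic invariance of weights. When $w(t_i, t_j) \ne 0$ (a two-sided chord), this expresses $w(t_1, \ldots, t_k)$ as a rational expression in the weights of two cycles of $\Gamma(T)$ of lengths $d+1$ and $k-d+1$, where $d = j-i$. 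Taking $d \in \{2, 3\}$ yields one cycle of length at most $5$ and another of length at most $k-1$, and the induction closes.

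If no such two-sided short chord exists but there is a one-sided chord, say $\phi_i(v_{i+d}) \ne 0$ and $\phi_{i+d}(v_i) = 0$ for some $d \in \{2, 3\}$, then density supplies $r = 1 + w \otimes \chi \in T$ with $\phi_{i+d}(w) \ne 0$ and $\chi(v_i) \ne 0$, and the modified insertion identity
\[
    w(t_1, \ldots, t_k) = \frac{w(t_i, t_{i+1}, \ldots, t_{i+d}, r) \cdot w(t_i, t_{i+d}, t_{i+d+1}, \ldots, t_{i-1})}{w(t_i, t_{i+d}, r)}
\]
(again a direct product-formula check, and all three cycles valid in $\Gamma(T)$ thanks to the density choice of $r$) expresses $w$ in terms of cycle weights of lengths $d+2$, $k-d+1$, and $3$. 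For $d \in \{2, 3\}$ each length is at most $5$ or at most $k-1$, so induction closes. The case of a one-sided chord in the opposite direction is handled symmetrically.

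The main obstacle is the doubly-degenerate case in which $\phi_i(v_{i+d}) = \phi_{i+d}(v_i) = 0$ for every $i$ and every $d \in \{2, 3\}$: neither direction of any short chord exists in the given cycle. A single insertion then cannot provide the required shortcut, and a double insertion (using two auxiliary transvections $r_1, r_2$ from density) produces a modified cycle that is still of length $k$, giving no strict reduction. The resolution, which is the technical heart of the argument, exploits the considerable flexibility granted by density: one chooses the auxiliary transvections so that the resulting modified length-$k$ cycle does admit a short chord, at which point the preceding cases apply. Since $\Gamma(T)$ has directed diameter at most $2$ under density, and many density inputs give many candidate insertions, such a generic choice exists, paralleling the argument in \cite{GHS}*{Lemma~4.3}.
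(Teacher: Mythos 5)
The paper does not prove this lemma itself; it simply cites \cite{GHS}*{Lemma~4.3}, so there is no in-paper proof to match against. Judged on its own merits, your proof has a genuine gap at the point you yourself flag as ``the technical heart of the argument.''

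Your two-sided and one-sided chord identities are correct, and the reduction strategy is sound in those cases. But in the doubly-degenerate case (no short chord in either direction), you assert that a ``generic'' choice of auxiliary transvections supplied by density produces a modified length-$k$ cycle admitting a short chord, ``at which point the preceding cases apply.'' This is not proved, nor is it clear how to prove it as stated: density is a global property of $T$ and gives no obvious control over which chords a particular modified cycle will possess. Moreover, even granting that a modified length-$k$ cycle $(t_1, r_1, t_3, t_4, \dots, t_k)$ has a short chord, you would still need a separate identity relating $w(t_1, \dots, t_k)$ to $w(t_1, r_1, t_3, \dots, t_k)$ plus short-cycle weights, which you do not provide.

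The fix is to drop the chord case analysis entirely. Given a cycle $(t_1,\dots,t_k)$ with $k\ge 6$ and $t_i = 1+v_i\otimes\phi_i$, apply density to the pair $(\phi_1, v_4)$ to obtain $s = 1+u\otimes\psi \in T$ with $\phi_1(u)\ne 0$ and $\psi(v_4)\ne 0$, and apply density to the pair $(\phi_4, v_1)$ to obtain $r = 1+w\otimes\chi \in T$ with $\phi_4(w)\ne 0$ and $\chi(v_1)\ne 0$. Then $(t_1,s,t_4,t_5,\dots,t_k)$, $(t_1,t_2,t_3,t_4,r)$, and $(t_1,s,t_4,r)$ are all genuine cycles in $\Gamma(T)$, of lengths $k-1$, $5$, and $4$ respectively, and a direct check of the product formula gives
\[
    w(t_1,\dots,t_k) = w(t_1,s,t_4,t_5,\dots,t_k)\cdot\frac{w(t_1,t_2,t_3,t_4,r)}{w(t_1,s,t_4,r)},
\]
with no chord hypothesis needed. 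Induction on $k$ then closes immediately. This uniform argument subsumes all three of your cases, so the case split is not only incomplete but also unnecessary.
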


\begin{proposition}
    \label{prop:classical-certificate}
    Let $n$ be sufficiently large.
    Let $T \subset \tv$ be dense and assume $G = \gen T$ has classical type and defining field $L(T) = \F_q$.
    Then there is a subset $T_0 \subset T$ of cardinality $|T_0| \le 10^5$ such that $G_0 = \gen{T_0}$ acts irreducibly on the section $W = [V, G_0] / [V, G_0]^{G_0}$ with classical type (over some subfield of $\F_q$) and $\dim W > 10$.
\end{proposition}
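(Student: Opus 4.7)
My plan is to build $T_0$ by amalgamating bounded-size witnesses supplied by \Cref{prop:unitary-symplectic-certificate,prop:sym-type-certificate,prop:monomial-type-certificate}, together with a seed that secures strong connectedness and the dimension lower bound.

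First I pick a seed $T_0^{(0)} \subset T$ of size $O(1)$ with $\Gamma(T_0^{(0)})$ strongly connected and $\dim [V, \gen{T_0^{(0)}}] \ge 11$; this is routine by density of $T$ and largeness of $n$. Since $G$ has classical type, it is neither symmetric nor monomial. In odd characteristic, cases (5) and (6) of \Cref{thm:classification} cannot occur for any transvection-generated subgroup of $G$ (the defining fields being powers of $2$), so no further witness is needed. In characteristic $2$, I adjoin a witness $T_0^{(\mathrm{sym})} \subset T$ of size $\le 1000$ against symmetric type for $G_0 | W$, taken from one of three sources depending on $G$: (a) \Cref{prop:sym-type-certificate} when $G$ preserves a symplectic form over $\F_2$, so $T \subset \tv \cap \Sp_n(2)$; (b) a cycle in $\Gamma(T)$ of weight outside $\F_2$, which exists by \Cref{lem:L5} when $q > 2$ and forces $L(T_0) \ne \F_2$, so the defining field of $G_0 | W$ cannot be $\F_2$; or (c) a non-symplectic cycle of length $\le 5$ from \Cref{prop:unitary-symplectic-certificate}, when $q = 2$ and $G = \SL_n(2)$, which via \Cref{prop:invariant-form} prevents $G_0 | W$ from preserving any symplectic form and hence from being symmetric. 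Similarly, if $G$ does not preserve a symplectic form (so $G$ is linear or unitary), I adjoin $T_0^{(\mathrm{mon})}$ of size $\le 1000$ against monomial type from \Cref{prop:monomial-type-certificate}; otherwise the induced nondegenerate symplectic form on $W$ inherited from $G$'s form precludes monomial type for $G_0 | W$ automatically. I take $T_0$ to be the union of these pieces and enlarge by $O(1)$ transvections via \Cref{prop:connect-up,prop:winkle} to restore strong connectedness and weak nondegeneracy, keeping $|T_0| \le 10^5$.

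By \Cref{prop:irreducible-section}, $G_0 := \gen{T_0}$ acts irreducibly on the section $W = [V, G_0] / [V, G_0]^{G_0}$, and $\dim W > 10$ by construction. Applying \Cref{thm:classification} to $G_0 | W$, the exceptional types (7)--(9) are excluded by the dimension bound; monomial and symmetric types are excluded by the witnesses above. For example, if $G_0 | W \cong S_m$ were symmetric, then by \Cref{lem:transpositions-transvections} the transvections in $T_0$ would map to transpositions in $S_m$, so $\gen{T_0^{(\mathrm{sym})}}$ would decompose as a direct product of symmetric groups on the components of its transposition graph (\Cref{prop:transposition-graph}); the irreducibility of its action on its commutator subspace would then force odd symmetric type, contradicting the choice of $T_0^{(\mathrm{sym})}$. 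A parallel argument using the unique $G_0 | W$-invariant monomial structure rules out monomial type. Hence $G_0 | W$ is classical, defined over some subfield of $\F_q$.

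The main obstacle is the transfer carried out in the last paragraph: verifying that the witnesses $T_0^{(\mathrm{sym})}$ and $T_0^{(\mathrm{mon})}$, designed to detect non-sym/non-monomial behaviour of their own action on commutator subspaces sitting inside $V$, rigorously exclude these types for the potentially different group $G_0 | W$ acting on the quotient $W$. One must trace the witness subgroup's action through the map $V_0 \twoheadrightarrow W = V_0 / V_0^{G_0}$ and check that a hypothesized symmetric or monomial structure on $W$ descends to force the witness into the excluded form on its commutator.
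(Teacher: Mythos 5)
Your high-level strategy matches the paper's: amalgamate the bounded-size witnesses from \Cref{prop:unitary-symplectic-certificate,prop:sym-type-certificate,prop:monomial-type-certificate} and push them through to the section $W$. However, there is a genuine gap in the transfer step that you flag at the end but do not actually close, and your mechanism for closing it (``restore weak nondegeneracy of $T_0$ via \Cref{prop:winkle}'') is not strong enough.

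The delicate case is a \emph{degenerate} witness for \Cref{prop:monomial-type-certificate} (case (2)(a)): a subset $T_2$ that is weakly nondegenerate but degenerate, i.e., $\dim V(T_2)\ne\dim V^*(T_2)$ with one kernel of the pairing zero. This is the only type of monomial witness available in the $\SL_n(q)$, $p=2$ case, and there is no guarantee you can replace it by an irreducible witness of type (2)(b). For this witness, the irreducibility trick you use for symmetric and monomial (b) witnesses (irreducibility of $V(T_2)$ as a $\gen{T_2}$-module forces the restriction of $\pi:V(T_0)\to W$ to $V(T_2)$ to be injective) does not apply, because $\gen{T_2}$ does \emph{not} act irreducibly on $V(T_2)$. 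Knowing that $T_0$ itself is weakly nondegenerate does not ensure both $V(T_2)\to W$ and $V^*(T_2)\to W^*$ are injective: one of these maps can have a nontrivial kernel, and the dimension mismatch $\dim V(T_2)\ne\dim V^*(T_2)$ can collapse in $W$, so $\bar T_2$ may become nondegenerate and the witness is lost. The paper instead invokes density to adjoin a set $T_1'$ with $V(T_1)\cap V^*(T_1')^\perp=0$ and $V^*(T_1)\cap V(T_1')^\perp=0$ \emph{before} completing to $T_0$; this forces $V(T_1)$ (hence $V(T_2)$) to embed into $W$ and $V^*(T_1)$ (hence $V^*(T_2)$) to embed into $W^*$, so the degeneracy survives. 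Your construction omits exactly this step.

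A related minor issue: your seed guarantees only $\dim V(T_0^{(0)})\ge 11$, but $\dim W = \min(\dim V(T_0),\dim V^*(T_0))$ when $T_0$ is weakly nondegenerate, so you also need $\dim V^*(T_0^{(0)})\ge 11$ (or the stronger injectivity above) before you can claim $\dim W>10$. Both issues are repaired by replacing the \Cref{prop:winkle} step with the density-based construction of $T_1'$ as in the paper.
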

\begin{proof}
    By simply taking a union of suitable sets we can find a set $T_1 \subset T$ of cardinality $|T_1| \le 10^4$ with the following properties:
    \begin{enumerate}
        \item $\dim V(T_1) \ge 2000$;
        \item if $q \ne 2$, $\Gamma(T_1)$ contains a cycle of length at most $5$ whose weight is not contained in $\F_2$ (\Cref{lem:L5});
        \item if $G$ does not preserve a symplectic form then $\Gamma(T_1)$ contains a non-symplectic cycle of length at most $5$ (\Cref{prop:unitary-symplectic-certificate});
        \item if $q = 2$ and $G$ preserves a symplectic form then $T_1$ has a subset $T_2$ of cardinality $|T_2| \le 1000$ such that $G_2 = \gen{T_2}$ acts irreducibly on $V_2 = [V,G_2]$ and the restriction $G_2 | V_2$ does not have odd symmetric type (\Cref{prop:sym-type-certificate});
        \item if $G$ does not preserve a symplectic form and $p=2$ then there is a subset $T_2 \subset T_1$ such that either
        \begin{enumerate}[(a)]
            \item $T_2$ is weakly nondegenerate but degenerate, or
            \item $G_2 = \gen{T_2}$ acts irreducibly on $V_2 = [V, G_2]$ with neither monomial type nor odd symmetric type
        \end{enumerate}
        (\Cref{prop:monomial-type-certificate}).
    \end{enumerate}
    By density of $T$, there is a set $T_1' \subset T$ of cardinality $|T_1'| \le 2|T_1|$ such that $V(T_1) \cap V^*(T_1')^\perp = 0$ and $V^*(T_1) \cap V(T_1')^\perp = 0$.
    Finally, by \Cref{prop:connect-up}, $T_1 \cup T_1'$ is contained in a set $T_0 \subset T$ of cardinality $|T_0| \le 6|T_1| \le 10^5$ such that $\Gamma(T_0)$ is strongly connected.

    Now consider the image $G_0 | W$ of $G_0$ in $\GL(W)$, where $W$ is the section $[V,G_0] / [V,G_0]^{G_0}$.
    By \Cref{prop:irreducible-section}, $G_0 | W$ is irreducible and there is natural surjective graph homomorphism $\Gamma(T_0) \to \Gamma(T_0 | W)$ respecting the weights of cycles.
    We claim that $T_0 | W$ inherits the properties listed above.
    Apart from (5)(a), this is clear.
    In the case of (5)(a), the argument is as follows.
    Note that, since $V(T_1) \cap V^*(T_0)^\perp \le V(T_1) \cap V^*(T_1')^\perp = 0$, the subspace $V(T_1) \le V(T_0)$ maps injectively into $W$.
    Similarly, $W^*$ is naturally identified with $V^*(T_0) / V^*(T_0) \cap V(T_0)^\perp$, and since $V^*(T_1) \cap V(T_0)^\perp = 0$ it follows that $V^*(T_1)$ maps injectively into $W^*$.
    Thus if $T_2 \subset T_1$ is weakly nondegenerate but degenerate then so is its restriction $T_2 | W$.

    It follows from \Cref{prop:sym-type-certificate} and \Cref{prop:monomial-type-certificate} that $G_0|W$ has neither symmetric type nor monomial type, and $\dim W > 6$, so $G_0 | W$ must have classical type.
\end{proof}

\subsection{Subfield subgroups}

Now let $K = \F_q$ be an arbitrary finite field, $T \subset \tv$, $G = \gen T \le \SL_n(q)$.
Assume $T$ is dense, $G$ is classical, and $K = L(T)$.
We aim to find a certificate $T_0 \subset T^{(\log q)^{O(1)}}$ of cardinality $|T_0| \le 3$ such that $L(T_0) = K$.

\begin{lemma}
%    \label{lem:primitive-element-tv}
    Let $G \le \SL_n(q)$ be a classical group generated by transvections with defining field $\F_q$.
    Then there is a subset $T_0 \subset G \cap \tv$ of cardinality $|T_0| \le 3$ such that $L(T_0) = \F_q$.
\end{lemma}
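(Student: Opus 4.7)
The plan is to do a case analysis on the classical type of $G$ and exhibit, in each case, an explicit small set of transvections $T_0 \subset G \cap \tv$ whose short-cycle weights generate $\F_q$ over the prime field.

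For $G = \SL_n(q)$, two transvections suffice: taking $t_1 = 1 + e_1 \otimes e_2^*$ and $t_2 = 1 + e_2 \otimes \alpha e_1^*$ for a generator $\alpha$ of $\F_q/\F_p$ gives $w(t_1, t_2) = \alpha$. In the symplectic and orthogonal cases, transvections have the form $1 + \lambda v \otimes v^*$ with $v^*(u) = f(u,v)$, so the two-cycle weight is $-\lambda_1 \lambda_2 f(v_1, v_2)^2$. In the symplectic case, $\lambda_i \in \F_q^*$ is free and a hyperbolic pair $v_1, v_2$ with $f(v_1, v_2) = 1$ lets the two-cycle weight be any prescribed element of $\F_q$, in particular a generator. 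In the orthogonal case (characteristic $2$), \Cref{prop:orthogonal} forces $\lambda_i = 1$ and $Q(v_i) = 1$, so the weight is $f(v_1, v_2)^2$; working inside a $4$-dimensional hyperbolic subspace (available because $n \ge 6$) one can take $v_1 = e_1 + f_1$ and $v_2 = \mu e_1 + e_2 + f_2$, which satisfy $Q(v_i) = 1$ and $f(v_1, v_2) = \mu$ for any $\mu \in \F_q$; since Frobenius is surjective on $\F_q$, any generator is realized as a square. Thus $|T_0| = 2$ in all three of these cases.

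The unitary case $G = \SU_n(q_0)$ with $q = q_0^2$ genuinely requires three transvections. Transvections have the form $1 + \lambda v \otimes v^*$ with $\lambda \in \F_{q_0}$ and $f(v, v) = 0$, so every two-cycle weight $-\lambda_1 \lambda_2 \opr{N}(f(v_1, v_2))$ lies in $\F_{q_0}$. The three-cycle weight $\lambda_1 \lambda_2 \lambda_3 f(v_2, v_1) f(v_3, v_2) f(v_1, v_3)$ need not be. A concrete choice that works is $v_1 = e$, $v_2 = f$, and $v_3 = e + \mu f$ inside a hyperbolic plane $\gen{e, f}$, where $\mu \in \F_q$ satisfies $\mu^\theta = -\mu$ and $\mu \ne 0$ (such $\mu$ exist because $\theta$ is nontrivial), together with $\lambda_1 = \lambda_2 = \lambda_3 = 1$; direct computation gives $w(t_1, t_2, t_3) = \pm \mu \in \F_q \setminus \F_{q_0}$. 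Since $[\F_q : \F_{q_0}] = 2$, any element of $\F_q \setminus \F_{q_0}$ generates $\F_q$ over $\F_{q_0}$, and combined with a nonzero two-cycle weight (which generates $\F_{q_0}$ trivially when $q_0$ is prime, and otherwise by a further choice of $\lambda_i$) we obtain $L(T_0) = \F_q$ with $|T_0| = 3$.

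The main obstacle is to keep straight which transvection parameters are available in each classical type—especially in the characteristic-$2$ orthogonal case, where the rigidity of \Cref{prop:orthogonal} removes the scalar freedom and we must instead exploit the surjectivity of Frobenius, and in the unitary case, where two transvections genuinely do not suffice and we must exploit a three-cycle weight lying outside the subfield $\F_{q_0}$. Small-$q$ sanity checks in the symplectic/orthogonal subcases are trivial when $q = p$ is prime and handled by an easy counting argument (the $(q+1)/2$ squares versus the $\asymp q$ primitive elements) otherwise.
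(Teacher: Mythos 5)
Your constructions for $\SL_n(q)$, $\Sp_n(q)$, and $\Or_n^\pm(q)$ ($p = 2$) are essentially the paper's own; the orthogonal case in particular is the same $4$-dimensional hyperbolic computation, and the paper dispenses with the symplectic case by noting $\Sp_2(q) \cong \SL_2(q)$. However, your unitary construction has a genuine gap, and it is the central case.

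\textbf{The gap in the unitary case.} You place $v_1 = e$, $v_2 = f$, $v_3 = e + \mu f$ inside a hyperbolic plane, set $\lambda_1 = \lambda_2 = \lambda_3 = 1$, require $\mu^\theta = -\mu$, and claim the weight $\pm \mu$ lies in $\F_q \setminus \F_{q_0}$. This mixes up conventions. If the invariant form is Hermitian (the convention for which the isotropy condition on $v_3 = e + \mu f$ is $\mu^\theta + \mu = 0$, as you wrote), then a transvection $1 + \lambda v \otimes v^*$ preserves the form only when $\lambda^\theta = -\lambda$, not $\lambda \in \F_{q_0}$; the choice $\lambda_i = 1$ is admissible only in characteristic $2$. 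In characteristic $2$ the condition $\mu^\theta = -\mu$ reads $\mu^\theta = \mu$, so $\mu \in \F_{q_0}$ and the weight $\mu$ lies \emph{inside} $\F_{q_0}$. In odd characteristic you must take $\lambda_i = \eps_i$ with $\eps_i^\theta = -\eps_i$; since $\{\lambda : \lambda^\theta = -\lambda\} = \eps \F_{q_0}$ for any fixed such $\eps$, and $\mu$ also lies in $\eps\F_{q_0}$, the product $\lambda_1\lambda_2\lambda_3\mu \in \eps^4 \F_{q_0} = \F_{q_0}$ again. (If instead you use the Hermitian-antisymmetric convention, for which $\lambda_i \in \F_{q_0}$ is correct, then the isotropy of $v_3 = e + \mu f$ forces $\mu \in \F_{q_0}$ directly.) Either way, every cycle weight in your construction lands in $\F_{q_0}$. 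There is a structural reason for this: transvections supported in a hyperbolic plane generate a copy of $\SU_2(q_0) \cong \SL_2(q_0)$, whose trace field is $\F_{q_0}$, so no cycle confined to a hyperbolic plane can have weight outside $\F_{q_0}$.

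\textbf{What the paper does instead.} The paper works in a $3$-dimensional unitary space with an anisotropic third coordinate $e_3$, taking $v_3 = \eps^{-3}\lambda e_1 + e_2 + z e_3$: the $e_3$-coefficient $z$ is chosen (using surjectivity of the norm map) to make $v_3$ isotropic, thereby \emph{decoupling} the $e_1$-coefficient $\eps^{-3}\lambda$ from the isotropy constraint. This produces a three-cycle weight of exactly $\lambda$, which can be taken to be any prescribed primitive element of $\F_q$ in one shot, with no need for an auxiliary two-cycle to handle $\F_{q_0}$. To repair your argument you would need to leave the hyperbolic plane in the same way; the claim that $n \ge 3$ is available for unitary groups is what makes this possible.
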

\begin{proof}
    Let $\lambda \in \F_q$ be a primitive element.

    If $G = \SL_2(q)$ then we can take $T_0 = \{t,s\}$ for $t = 1 + \lambda e_1 \otimes e_2^*$ and $s = 1 + e_2 \otimes e_1^*$.
    Here $e_i^*$ is defined by $e_i^*(e_j) = \delta_{ij}$.
    Observe that
    \[
        w(t,s) = \tr((t-1)(s-1)) = \tr(\lambda e_1 \otimes e_1^*) = \lambda.
    \]
    Therefore $L(T_0)$ contains $\F_p(\lambda) = \F_q$.
    This completes the proof for linear and symplectic groups since both contain copies of $\SL_2(q)$.

    Now consider $G = \SU_3(\sqrt q)$. Since all unitary forms are equivalent it suffices to consider the unitary form
    \[
        f(x, y) = x_1 y_2^\theta + x_2 y_1^\theta + x_3 y_3^\theta,
    \]
    where $x^\theta = x^{\sqrt q}$ is the involutary automorphism.
    Now for $v \in V = \F_q^3$ let us denote by $v^* \in V^*$ the map $u \mapsto f(u, v)$.
    Note that $e_1, e_2$ is a hyperbolic pair.
    Let $\eps \in \F_q$ be such that $\eps^\theta = - \eps$ and consider $t_i = 1 + \eps v_i \otimes v_i^*$ for $i=1,2,3$, where $v_1 = e_1$, $v_2 = e_2$, and
    \[
        v_3 = \eps^{-3} \lambda e_1 + e_2 + z e_3,
    \]
    where $z$ is chosen so that $v_3$ is singular (recall that $\Fix(\theta) = \{zz^\theta : z \in K\}$ for finite fields).
    Since $v_1,v_2,v_3$ are singular, we have $t_1, t_2, t_3 \in G$, and
    \[
        w(t_1,t_2,t_3) = \eps^3 f(v_2, v_1) f(v_3, v_2) f(v_1, v_3)
        = \lambda.
    \]
    Since $\SU_3(\sqrt q) \le \SU_n(\sqrt q)$ for $n \ge 3$ this completes the proof for unitary groups.

    Finally consider an orthogonal group $G = \Or_n^\pm(q)$, $n \ge 6$, $p=2$.
    The transvections in $G$ are precisely those of the form $1 + v \otimes v^*$ where $Q(v) = 1$
    and $v^*$ is defined by $v^*(u) = f(u, v)$.
    Observe that
    \[
        w(1 + u\otimes u^*, 1 + v \otimes v^*) = f(u,v)^2.
    \]
    Let $H \le V$ be a $4$-dimensional hyperbolic subspace (see \cite{aschbacher-book}*{(21.2)}) and let $e_1,e_2,e_3,e_4$ be a hyperbolic basis for $H$.
    With respect to this basis we have, for $x, y \in H$,
    \begin{align*}
        Q(x) &= x_1 x_2 + x_3 x_4, \\
        f(x, y) &= x_1 y_2 + x_2 y_1 + x_3 y_4 + x_4 y_3.
    \end{align*}
    Let $u = e_1 + e_2$ and $v = \lambda e_1 +  e_3 + e_4$. Then $Q(u) = Q(v) = 1$ and $f(u,v) = \lambda$. Since finite fields are perfect, $\lambda^2$ generates $\F_q$, so it suffices to take $T_0 = \{1 + u \otimes u^*, 1 + v \otimes v^*\}$.
\end{proof}

\begin{proposition}
    \label{prop:subfield}
    Let $T \subset \tv$ be dense, $G = \gen T$ classical, $L(T) = \F_q$, and assume $n$ is sufficiently large.
    Then there is a subset $S \subset T^{(\log q)^{O(1)}} \cap \tv$ of cardinality $|S| \le 3$ such that $L(S) = \F_q$.
\end{proposition}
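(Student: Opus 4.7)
The plan is to reduce to the bounded-rank case of Babai's conjecture. I would construct a subset $T_1 \subset T$ of constant cardinality (independent of $q$) such that $G_1 = \gen{T_1}$ acts irreducibly as a classical group of the same type as $G$ on the subspace $V(T_1) = [V, G_1]$ of bounded dimension, with defining field $L(T_1) = \F_q$, and such that $T_1$ is nondegenerate, so $V = V(T_1) \oplus V^{G_1}$. Given such a $T_1$, the group $H = G_1 | V(T_1)$ is a bounded-rank classical group over $\F_q$ with $|H| \le q^{O(1)}$, so \Cref{main-thm-bounded-rank} yields $\ell_{T_1|V(T_1)}(H) \le (\log q)^{O(1)}$. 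The preceding lemma then produces a set $S_0 \subset H \cap \tv$ with $|S_0| \le 3$ and $L(S_0) = \F_q$, each element a word of length $(\log q)^{O(1)}$ in $T_1 | V(T_1)$. The nondegeneracy of $T_1$ lifts $S_0$ canonically (via the $G_1$-trivial summand $V^{G_1}$) to transvections $S \subset G_1 \cap \tv$ realized by the same words in $T_1 \subset T$; cycle weights are preserved under the lift, so $L(S) = \F_q$ and $S \subset T^{(\log q)^{O(1)}} \cap \tv$, as required.

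To construct $T_1$, I would begin with the bounded-size certificate $T_0 \subset T$ from \Cref{prop:classical-certificate}, so that $G_0 | W$ is classical of the same type as $G$ on a bounded-dimensional section $W$, with defining field $L(T_0) = \F_{q_0}$ some subfield of $\F_q$. If $\F_{q_0} \ne \F_q$, I would adjoin transvections forming cycles of length at most $5$ in $\Gamma(T)$ whose weights lie outside the current subfield (such cycles exist by \Cref{lem:L5} since $L(T) = \F_q$), strictly increasing the defining field at each step. Next I would apply \Cref{prop:winkle} to enlarge the current set by at most its defect into a weakly nondegenerate one, and upgrade to full nondegeneracy using the earlier proposition equating $\dim V(T_1) = \dim V^*(T_1)$ for classical types preserving a form (unitary, symplectic, orthogonal), or using \Cref{prop:tv-in-SL-sections} for the linear type. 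Finally, \Cref{prop:connect-up} ensures strong connectedness of $\Gamma(T_1)$, and the resulting $H$ inherits the same classical type as $G$ since $G_1 \le G$ preserves the corresponding form (or just $\SL$) on $V(T_1)$.

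The main obstacle is executing the defining-field boost from $\F_{q_0}$ to $\F_q$ while keeping $|T_1|$ bounded by an absolute constant: a naive iteration through the subfield lattice of $\F_q / \F_{q_0}$ adjoins $O(\log \log q)$ transvections, inflating the rank of $H$ past any constant and invalidating the bounded-rank reduction (since the exponent $O_n(1)$ in \Cref{thm:babai-bounded-rank} then depends on $q$). The cleanest resolution would be a single-step boost, namely a cycle of bounded length in $\Gamma(T)$ whose weight is a primitive element of $\F_q$ over $\F_{q_0}$. This does not follow directly from \Cref{lem:L5}, but can plausibly be arranged using the density of $T$ together with conjugation by elements of $G_0$ (each of which, by bounded-rank Babai applied to $G_0 | W$, is a word of length $(\log q_0)^{O(1)}$ in $T_0$), thereby fabricating a primitive-weight cycle as a short word in $T$ and producing the bounded certificate $T_1$ required by the first paragraph.
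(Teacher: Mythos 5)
You correctly identify the central obstacle: adjoining one subfield-boosting cycle per step through the lattice of subfields of $\F_q/\F_{q_0}$ produces a set of size growing with $\log\log q$, so the rank of the certificate is no longer bounded and \Cref{thm:babai-bounded-rank} is no longer directly applicable. However, your proposed fix does not work and the gap is genuine. First, the suggestion to fabricate a bounded-length cycle of primitive weight by conjugating by elements of $G_0$ fails outright: the weight $w(t_1,\dots,t_k)=\tr((t_1-1)\cdots(t_k-1))$ is invariant under simultaneous conjugation of the cycle, so conjugation produces no new cycle weights. Second, even apart from this specific mechanism, nothing in the paper's machinery (in particular \Cref{lem:L5}) guarantees the existence of a single short cycle whose weight is a primitive element of $\F_q$ over $\F_{q_0}$; a short cycle only promises a weight outside $\F_{q_0}$, which may generate a much smaller extension. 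You flag this step as merely "plausible," and it is precisely the place where the argument is missing.

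The paper's actual resolution is structurally different and avoids the rank-inflation problem by \emph{compressing} at each step rather than accumulating. At stage $i$, instead of carrying the whole set built so far, it replaces $G_{i-1}$ by a rank-$\le 6$ classical subgroup $H_i\le G_{i-1}$ with the \emph{same} defining field $K_{i-1}$ (this is where the preceding lemma is used: a bounded set of transvections generating a small classical group over $K_{i-1}$). Then it adjoins one new short cycle from $\Gamma(T)$ with weight outside $K_{i-1}$ and closes up to a fresh bounded-size set $T_i\subset T\cup(G_{i-1}\cap\tv)$ with $|T_i|\le 34$. Thus each $G_i$ has bounded rank throughout the iteration. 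The diameter is then controlled by telescoping: $\ell_T(G_r)\le \ell_T(G_0)\prod_i\ell_{T_i\cup H_i}(G_i)$, where each factor is $O(\log|K_i|/\log|K_{i-1}|)^{O(1)}$ by \Cref{main-thm-bounded-rank}, and since $|K_i|\ge|K_{i-1}|^2$ the number of steps $r$ satisfies $2^{2^r}\le q$, so $O(1)^r=(\log q)^{O(1)}$ and the product is $(\log q)^{O(1)}$. Your proposal lacks this compression idea; without it, the single-shot reduction to bounded-rank Babai cannot be made to go through.
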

\begin{proof}
    First assume $G \ne \SL_n(q)$.
    Then there is a $G$-invariant symplectic or unitary form $f$.
    The presence of this form simplifies the argument enough that it is worth keeping it separate from the $\SL_n(q)$ case.

    First, by \Cref{prop:classical-certificate} and \Cref{prop:winkle}, there is a nondegenerate subset $T_0 \subset T$ of bounded cardinality such that $G_0 = \gen {T_0}$ acts irreducibly on $U_0 = [V, G_0]$ with quasisimple classical type.
    Let $K_0 = L(T_0)$.
    Now for $i > 0$ proceed as follows.
    Assume $T_{i-1} \subset \tv$ is a nondegenerate set of transvections of bounded cardinality such that $G_{i-1} = \gen {T_{i-1}}$ acts on $U_{i-1} = [V, G_{i-1}]$ with quasisimple classical type and defining field $K_{i-1} = L(T_{i-1})$.
    If $K_{i-1} = \F_q$, stop.
    Otherwise, let $H_i \le G_{i-1}$ be a quasisimple classical subgroup with the same defining field as $G_{i-1}$ and of rank at most $6$ (e.g., a copy of $\SL_2$, $\SU_3$, or $\Or_6^\pm$ over $K_{i-1}$).
    By the previous lemma, there is a subset $T_i^0 \subset H_i \cap \tv$ of cardinality at most $10$ generating $H_i$.
    By \Cref{lem:L5}, there is a cycle $T_i^1$ in $\Gamma(T)$ of length at most $5$ whose weight is not contained in $K_{i-1}$.
    Applying \Cref{prop:connect-up,prop:winkle}, $T_i^0 \cup T_i^1$ is contained in a nondegenerate set $T_i \subset T \cup (G_{i-1} \cap \tv)$ of cardinality $|T_i| \le 34$ such that $\Gamma(T_i)$ is strongly connected.
    Let $G_i = \gen{T_i}$ and $K_i = L(T_i)$.
    Let $U_i = [V, G_i]$. Then $V = U_i \oplus U_i^\perp$ since $T_i$ is nondegenerate, and $G_i$ acts irreducibly on $U_i = [V, G_i]$ by \Cref{prop:irreducible-section}.
    Since $H_i \le G_i$ and $H_i$ has quasisimple classical type, it follows from \Cref{thm:classification} that $G_i$ also has quasisimple classical type on $U_i$.
    Moreover, $G_i$ has defining field $K_i > K_{i-1}$.

    The process ends with some $K_r = \F_q$. Then $K_0 < \cdots < K_r = \F_q$. For each $i$, by \Cref{main-thm-bounded-rank}, we have
    \[
        \ell_{T_i \cup H_i}(G_i) \le O\pfrac{\log |G_i|}{\log |H_i|}^{O(1)} = O\pfrac{\log |K_i|}{\log |K_{i-1}|}^{O(1)}.
    \]
    Also $\ell_{T_0}(G_0) \le O(\log |K_0|)^{O(1)}$.
    Therefore, since $H_i \le G_{i-1}$ for each $i$,
    \begin{align*}
        \ell_T(G_r) \le \ell_T(G_0) \prod_{i=1}^r \ell_{T \cup H_i}(G_i)
        &\le O(\log |K_0|)^{O(1)} \prod_{i=1}^r O\pfrac{\log |K_i|}{\log |K_{i-1}|}^{O(1)}\\
        &\le O(1)^r \br{\log|K_r|}^{O(1)}.
    \end{align*}
    Now since $|K_i| \ge |K_{i-1}|^2$ for each $i$ we have $q \ge 2^{2^r}$, so it follows that
    \[
        \ell_T(G_r) \le O(\log q)^{O(1)}.
    \]
    Finally, by the previous lemma there is a subset $S \subset G_r \cap \tv$ of cardinality $|S| \le 3$ such that $L(S) = \F_q$, as required.

    Now suppose $G = \SL_n(q)$. We argue similarly but because we cannot guarantee nondegeneracy we instead work with the section $W_i = [V, G_i] / [V, G_i]^{G_i}$ throughout the argument.
    First we define a bounded-cardinality starting set $T_0 \subset T$.
    Using \Cref{prop:unitary-symplectic-certificate} we may include in $T_0$ a non-symplectic cycle of length at most $5$.
    If $q$ is a square, we also include a non-unitary cycle, as well as a cycle whose weight is not in $\F_{\sqrt q}$, using \Cref{lem:L5}.
    Using \Cref{prop:classical-certificate,prop:connect-up,prop:winkle}, we may assume $T_0$ is weakly nondegenerate, $\Gamma(T_0)$ is strongly connected, and $G_0 = \gen{T_0}$ acts irreducibly and classically on $W_0 = [V, G_0] / [V, G_0]^{G_0}$, and $\dim W_0 > 10$.
    Because of the presence of non-symplectic and non-unitary cycles, $G_0 | W_0 \cong \SL_{\dim W_0}(K_0)$ where $K_0 = L(T_0)$.
    Now for $i > 0$ proceed as follows.
    Assume $T_{i-1} \subset \tv$ is a weakly nondegenerate set of transvections of bounded cardinality such that $G_{i-1} = \gen{T_{i-1}}$ induces $\SL_{\dim W_{i-1}}(K_{i-1})$ on $W_{i-1} = [V, G_{i-1}] / [V, G_{i-1}]^{G_{i-1}}$, and $\dim W_{i-1} > 10$.
    If $K_{i-1} = \F_q$, stop.
    Otherwise, by the previous lemma and \Cref{prop:tv-in-SL-sections} there is a subset $T_i^0 \subset G_{i-1} \cap \tv$ of cardinality at most $3$ (actually $2$) such that $L(T_i^0) = K_{i-1}$.
    By \Cref{lem:L5}, there is a cycle $T_i^1$ in $\Gamma(T)$ of length at most $5$ whose weight is not contained in $K_{i-1}$.
    Let $t = 1 + v \otimes \phi \in T_i^0$ and let $H_i = 1 + K_{i-1} v \otimes \phi \cong K_{i-1}$ be the full transvection subgroup over $K_{i-1}$ generated by $t$.
    It follows from \Cref{prop:tv-in-SL-sections} that $H_i \le G_{i-1}$.
    Applying \Cref{prop:connect-up,prop:winkle}, $T_0 \cup T_i^0\cup T_i^1$ is contained in a bounded-cardinality weakly nondegenerate set $T_i \subset T \cup (G_{i-1} \cap \tv)$ such that $\Gamma(T_i)$ is strongly connected.
    Let $G_i = \gen{T_i}$ and $W_i = [V, G_i] / [V, G_i]^{G_i}$ and $K_i = L(T_i)$. Then $K_i > K_{i-1}$ and, since $T_0 \subset T_i$, $G_i | W_i \cong \SL_{\dim W_i}(K_i)$.
    Since $W_0$ is a section of $W_i$, $\dim W_i \ge \dim W_0 > 10$.

    As before the process ends with some $K_r = \F_q$. Then $K_0 < \cdots < K_r = \F_q$.
    Now we use \Cref{main-thm-bounded-rank} in combination with the diameter bound in \Cref{prop:tv-in-SL-sections}.
    For each $i$ we have
    \[
        \ell_{T_i \cup H_i}(G_i) \le O\pfrac{\log |G_i|}{\log |(H_i|W_i)|}^{O(1)} = O\pfrac{\log|K_i|}{\log|K_{i-1}|}^{O(1)}.
    \]
    Also $\ell_{T_0}(G_0) \le O(\log |K_0|)^{O(1)}$.
    Since $H_i \le G_{i-1}$ for each $i$ we get $\ell_T(G_r) \le O(\log q)^{O(1)}$ as before.
    Finally we apply the previous lemma in combination with \Cref{prop:tv-in-SL-sections} again to find $S \subset G_r \cap \tv$ as required.
\end{proof}

\subsection{Orthogonal subgroups}

The last thing remaining is distinguish orthogonal groups from symplectic groups. Throughout this subsection assume $K = \F_q$ is a finite field of characteristic $2$.

\begin{lemma}
    \label{lem:Q-surjective-on-affine-subspace}
    Let $(V, Q)$ be a nondegenerate quadratic space over a finite field $K = \F_q$.
    Let $w + H \subset V$ be an affine subspace of codimension $2$.
    Assume $\dim V \ge 10$.
    Then $Q(w + H) = K$.
\end{lemma}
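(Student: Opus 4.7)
The plan is to reduce to producing a hyperbolic pair inside $H$, namely vectors $e, g \in H$ with $Q(e) = Q(g) = 0$ and $f(e,g) = 1$, where $f$ is the bilinear form associated to $Q$. Granting such a pair, expansion in characteristic $2$ gives
\[
Q(w + \alpha e + \beta g) = Q(w) + \alpha^2 Q(e) + \beta^2 Q(g) + \alpha\beta f(e,g) + \alpha f(w, e) + \beta f(w, g) = Q(w) + \alpha\beta + a\alpha + b\beta,
\]
where $a = f(w,e)$ and $b = f(w,g)$. Using the identity
\[
\alpha\beta + a\alpha + b\beta = (\alpha + b)(\beta + a) + ab,
\]
and observing that $(\alpha+b)(\beta+a)$ ranges over all of $K$ as $(\alpha,\beta)$ varies over $K^2$ (fix $\beta = 1 + a$ and vary $\alpha$), I conclude that $Q(w + \alpha e + \beta g)$ takes every value in $K$.

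To produce the hyperbolic pair, let $R = \{v \in H : Q(v) = 0 \text{ and } f(v, H) = 0\}$ be the radical of $Q|_H$. By definition $R \subseteq H \cap H^\perp$, where $H^\perp$ denotes the orthogonal with respect to $f$. A direct count bounds $\dim H^\perp \le (\dim V - \dim H) + \dim \mathrm{Rad}(f) = 2 + \dim \mathrm{Rad}(f) \le 3$, since $Q$ is nondegenerate and so $\dim \mathrm{Rad}(f) \le 1$ in characteristic~$2$. Hence $\dim R \le 3$, and the quotient $H/R$ carries a nondegenerate quadratic form $\bar Q$ of dimension at least $\dim H - 3 \ge 5$ over $K$. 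By Chevalley--Warning any quadratic form in dimension $\ge 3$ over a finite field is isotropic, so $H/R$ contains a nonzero singular vector $\bar e$; choose any $\bar g_0 \in H/R$ with $f(\bar e, \bar g_0) = 1$ and set $\bar g = \bar g_0 + Q(\bar g_0)\bar e$, which satisfies $Q(\bar g) = 0$ and $f(\bar e, \bar g) = 1$.

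Finally I lift $(\bar e, \bar g)$ to any $e, g \in H$. Because $R \subseteq H^\perp$ and $Q|_R = 0$, both $Q(e), Q(g)$ and $f(e, g)$ are independent of the choice of lift, so $Q(e) = Q(g) = 0$ and $f(e, g) = 1$, as required. The only genuinely delicate step is the dimension bookkeeping showing that $H/R$ has dimension at least $3$ (so that isotropic vectors exist); the hypothesis $\dim V \ge 10$ is comfortably sufficient for this, and indeed the argument goes through whenever $\dim V \ge 6$.
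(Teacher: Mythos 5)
Your proof is correct. It is also genuinely different in execution from the paper's, which is worth noting. The paper first restricts attention to $H_1 = H \cap w^\perp$, so that $Q(w+v) = Q(w) + Q(v)$ for $v \in H_1$ and all cross terms with $w$ disappear; it then splits $H_1$ by its $f$-radical to obtain a nondegenerate subspace $H_2$ of dimension at least $4$ and cites the classification of quadratic forms over finite fields to conclude $Q(H_2) = K$. You instead locate a hyperbolic pair $e, g$ directly inside $H$ (not inside $H \cap w^\perp$), and then deal with the surviving cross terms $f(w,e), f(w,g)$ by the explicit algebraic identity $\alpha\beta + a\alpha + b\beta = (\alpha+b)(\beta+a) + ab$. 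Your route trades the citation to the classification of forms for an appeal only to Chevalley--Warning (isotropy in dimension $\ge 3$), plus a short computation; it also has to produce specifically a hyperbolic pair rather than merely a nondegenerate subspace. Both radical-removal steps are valid (you quotient by the $Q$-radical; the paper complements the $f$-radical).

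One small inaccuracy in your concluding aside: your own bound $\dim R \le 3$ gives $\dim(H/R) \ge \dim V - 5$, so you need $\dim V \ge 8$ (not $6$) for $\dim(H/R) \ge 3$. One can sharpen to $\dim R \le 2$: since $Q$ is additive and Frobenius-semilinear on the totally $f$-isotropic space $H \cap H^\perp$, its kernel there has codimension at most $1$, and the case $\dim(H\cap H^\perp) = 3$ forces $\operatorname{rad}(f) \subseteq H$, on which $Q$ is nonzero. That still only gets you to $\dim V \ge 7$. None of this matters for the stated hypothesis $\dim V \ge 10$.
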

\begin{proof}
    For $v \in H \cap w^\perp$ we have $Q(w + v) = Q(w) + Q(v)$, so it suffices to show that $Q(H_1) = K$ for $H_1 = H \cap w^\perp$. Write $H_1 = (H_1 \cap H_1^\perp) \oplus H_2$. Then $H_2$ is nondegenerate and $H_2$ has codimension at most $6$, so $\dim H_2 \ge 4$.
    It follows from the classification of quadratic forms over finite fields
    \cite{aschbacher-book}*{(21.2)} that $Q(H_2) = K$.
\end{proof}

Now assume $T \subset \tv \cap \Sp_n(q)$ is dense, $L(T) = K = \F_q$, and $G = \gen T$ is classical.

\begin{proposition}
    \label{prop:orthogonal-certificate}
    There is a constant $C$ such that the following are equivalent:
    \begin{enumerate}
        \item $G$ is an orthogonal group,
        \item every nondegenerate subset $T_0 \subset T^{(n \log q)^{O(1)}}$ of cardinality at most $C$ generates a subgroup $G_0 = \gen{T_0}$ that preserves an orthogonal structure on $V_0 = V(T_0) = [V, G_0]$.
    \end{enumerate}
\end{proposition}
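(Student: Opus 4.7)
The direction $(1) \Rightarrow (2)$ is immediate: if $G \le \Or(V, Q)$ preserves a nondegenerate quadratic form $Q$ associated to $f$, then for any nondegenerate $T_0 \subset G$ the subspace $V_0 = [V, G_0]$ is nondegenerate with respect to $f$, so the restriction $Q|V_0$ is a nondegenerate quadratic form on $V_0$ associated to $f|V_0$ and preserved by $G_0 = \gen{T_0}$.

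For $(2) \Rightarrow (1)$ I argue by contradiction. Suppose $(2)$ holds but $G$ is not an orthogonal group. Since $G \le \Sp(V)$ is irreducible, classical, and generated by transvections with defining field $\F_q$, Kantor's classification (\Cref{thm:classification}) leaves only $G = \Sp(V)$. The plan is to construct a bounded nondegenerate $T_0 \subset T^{(n \log q)^{O(1)}}$ such that $G_0 = \gen{T_0}$ induces the full symplectic group on some $V_0 = [V, G_0]$ of bounded dimension at least $4$; since in characteristic $2$ the group $\Sp(V_0)$ preserves no nondegenerate quadratic form (the orthogonal subgroups $\Or^\pm(V_0)$ being proper), this will contradict $(2)$.

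As a starting set, combining \Cref{prop:classical-certificate}, \Cref{prop:connect-up}, \Cref{prop:winkle}, and \Cref{prop:subfield} produces a bounded nondegenerate $T_1 \subset T^{(\log q)^{O(1)}}$ such that $G_1 = \gen{T_1}$ acts classically and irreducibly on $V_1 = [V, G_1]$, with defining field $\F_q$ and $\dim V_1 \ge 10$. By Kantor's theorem applied to the irreducible subgroup $G_1|V_1 \le \Sp(V_1)$, either $G_1|V_1 = \Sp(V_1)$---in which case $T_0 := T_1$ already contradicts $(2)$---or $G_1|V_1 = \Or^\pm(V_1, Q_1)$, where $Q_1$ is the unique quadratic form on $V_1$ associated to $f|V_1$ with $Q_1(v_s) = 1$ for all $s \in T_1 \cap \tv$. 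In the latter case the task reduces to producing a transvection $t' = 1 + w \otimes w^* \in T^{(n \log q)^{O(1)}}$ with $w \in V_1$ and $Q_1(w) \ne 1$: since $w \in V_1$, the transvection $t'$ fixes $V_1^\perp = V^{G_1}$ pointwise, so $T_0 := T_1 \cup \{t'\}$ satisfies $V_0 = V_1$, and $G_0|V_0$ is an irreducible transvection-generated subgroup of $\Sp(V_1)$ strictly containing $\Or^\pm(V_1, Q_1)$; Kantor's theorem then forces $G_0|V_0 = \Sp(V_1)$, the desired contradiction.

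To construct $t'$, note that $V_1 \subsetneq V$ forces $V_1^\perp$ to be a nondegenerate symplectic subspace of even dimension, hence $\ge 2$. By density of $T$ (\Cref{prop:density}) and by the arguments of \Cref{prop:subfield}, one picks $s_1, s_2 \in T^{(\log q)^{O(1)}}$ whose supports have nonzero $V_1^\perp$-components, satisfy $f(v_{s_1}, v_{s_2}) \ne 0$, and generate a subgroup $\gen{s_1, s_2} \cong \SL_2(\F_q)$ acting as the full special linear group on the hyperbolic plane $\mathrm{span}(v_{s_1}, v_{s_2})$. By the bounded-rank Babai conjecture (\Cref{main-thm-bounded-rank}) every transvection with support in this plane then lies in $\{s_1, s_2\}^{(\log q)^{O(1)}} \subset T^{(\log q)^{O(1)}}$. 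A one-parameter family of scalars $(\alpha, \beta) \in \F_q^2$ makes the $V_1^\perp$-component of $w := \alpha v_{s_1} + \beta v_{s_2}$ vanish, and only a $1/q$-fraction of this family has $Q_1(w) = 1$, so a generic choice yields the required $t'$. The principal obstacle is arranging $s_1, s_2$ and $(\alpha, \beta)$ so that \emph{all} of the constraints---nontrivial $V_1^\perp$-projections, nonzero $f$-pairing, the subfield condition $L(\{s_1, s_2\}) = \F_q$, support in $V_1$, and $Q_1(w) \ne 1$---hold simultaneously; this is a counting argument combining density of $T$ with the small-quadric estimate $|\{v \in V_1 : Q_1(v) = 1\}| = (1+o(1))q^{\dim V_1 - 1}$, requiring mild extra care in the smallest-field case $q = 2$.
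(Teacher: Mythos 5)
Your proof of $(1) \Rightarrow (2)$ is correct and matches the paper. For $(2) \Rightarrow (1)$ you take a genuinely different route from the paper: you argue by contradiction, aiming to exhibit a bounded nondegenerate $T_0$ inducing the full symplectic group on $V_0$, whereas the paper verifies directly that $G$ preserves a quadratic form via \Cref{prop:orthogonal}, using a divide-and-conquer on linear relations among the $v_t$ (this is where the $\log n$ iterations, and hence the $n$ in the exponent, come from). Your strategy is conceptually attractive but has concrete gaps at the crux of the construction.

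The main gap is in the step producing $t' = 1 + w \otimes w^*$ with $w \in V_1$ and $Q_1(w) \ne 1$. You take $s_1, s_2$ with nonzero $V_1^\perp$-components and claim that a one-parameter family of scalars $(\alpha, \beta)$ makes the $V_1^\perp$-component of $\alpha v_{s_1} + \beta v_{s_2}$ vanish. This is only true if the $V_1^\perp$-projections of $v_{s_1}$ and $v_{s_2}$ are \emph{parallel}; since $\dim V_1^\perp \ge 2$, two arbitrary nonzero projections are generically linearly independent, in which case the only solution is $\alpha = \beta = 0$. Density of $T$ gives existence of transvections hitting a prescribed vector and functional, but gives no control over the direction of the $V_1^\perp$-projection of $v_s$, so it is unclear how to force the parallel condition. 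Additionally, you need $\gen{s_1, s_2}$ to act on the plane $P = \operatorname{span}(v_{s_1}, v_{s_2})$ as $\SL_2(\F_q)$ rather than $\SL_2$ over a proper subfield, \emph{and} the pair $(\alpha, \beta)$ to lie in the corresponding subfield structure $L v_{s_1} + L v_{s_2}$, since otherwise $1 + w \otimes w^*$ need not lie in $\gen{s_1, s_2}$; this imposes further compatibility constraints not addressed. Finally, the scalar-freedom argument you invoke for $Q_1(w) \ne 1$ is genuinely problematic at $q = 2$: along the one-parameter line there is only a single nonzero scalar, and in characteristic $2$ we have $Q_1(\lambda w_0) = \lambda^2 Q_1(w_0)$, so the escape you need is not available by counting. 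The paper sidesteps all of these issues by never trying to produce a single witnessing transvection; instead it inserts auxiliary transvections (via \Cref{lem:Q-surjective-on-affine-subspace}) to shorten a given linear relation, iterating $\log n$ times, and only then invokes hypothesis (2) on bounded-support relations. If you want to save your contradiction approach, you should explain concretely how to arrange the parallel-projection constraint (possibly by working with $\dim V_1^\perp + 1$ transvections rather than $2$), and you must treat $q = 2$ separately.
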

\begin{proof}
    \emph{(1) $\implies$ (2):} Suppose $G$ preserves an orthogonal form $Q$ with corresponding symplectic form $f$. If $T_0 \subset T$ is nondegenerate then $V_0 = V(T_0)$ is an $f$-nondegenerate subspace of $V$. Thus $Q_0 = Q|V_0$ is a $G_0$-invariant orthogonal form on $V_0$.

    \emph{(2) $\implies$ (1):}
    First, by \Cref{prop:classical-certificate,prop:subfield,prop:connect-up,prop:winkle}, there is a bounded-cardinality nondegenerate subset $T_0 \subset T^{(\log q)^{O(1)}}$ such that $G_0 = \gen{T_0}$ acts irreducibly and classically on $V_0 = V(T_0)$ , $\dim V_0 \ge 10$, and $L(T_0) = K$. Note $G_0$ must be an orthogonal group. By \Cref{main-thm-bounded-rank},
    \[
        \ell_T(G_0) \le O(\log q)^{O(1)}, \qquad \ell_{G_0 \cap \tv}(G_0) \le O(1).
    \]
    Thus by replacing $T$ with $T^{(\log q)^{O(1)}} \cap \tv$ we may assume $T_0 \subset T$ and $\ell_T(G_0) \le O(1)$.

    Now we apply \Cref{prop:orthogonal}. For each $t \in T$ there is a unique $v_t \in V$ such that $t = 1 + v_t \otimes v_t^*$.
    Consider some linear relation
    \[
        \sum_{t \in T} \lambda_t v_t = 0.
    \]
    Suppose $\lambda$ is supported on $t_1, \dots, t_m$, and we may assume $m \le n+1$.
    We must show that $\tilde Q(\lambda) = 0$, where
    \[
        \tilde Q(\lambda) = \sum_{t \in T} \lambda_t^2 + \sum_{\{t,s\} \in \binom{T}{2}} \lambda_s \lambda_t f(v_s, v_t).
    \]
    Define also
    \[
        \tilde f (\lambda, \lambda') = \tilde Q(\lambda+\lambda') - \tilde Q(\lambda) - \tilde Q(\lambda') = \sum_{\substack{t, s \in T \\ t \ne s}} \lambda_s \lambda'_t f(v_s, v_t).
    \]
    We may think of $\lambda$ as an element of $K^\tv$, and $\tilde Q$ defines an orthogonal form on $K^\tv$ with corresponding symplectic form $\tilde f$.

    We aim to reduce the support of $\lambda$ as much as possible.
    Suppose $m\ge 10$, i.e., the support of $\lambda$ contains at least $10$ transvections $t_1, \dots, t_{10}$.
    Applying \Cref{prop:connect-up,prop:winkle}, $T_0 \cup \{t_1, \dots, t_{10}\}$ is contained in a bounded-cardinality nondegenerate set $T_1 \subset T$ such that $G_1 = \gen{T_1}$ acts irreducibly on $V_1 = V(T_1)$. Since $T_0 \subset T_1$, $G_1$ must act classically, and therefore $G_1$ must be an orthogonal group by hypothesis.
    Since $G_0 \le G_1$, by \Cref{main-thm-bounded-rank} we have
    \[
        \ell_T(G_1) \le \ell_{T_1 \cup G_0}(G_1) \ell_T(G_0) \le O(1).
    \]
    By \Cref{lem:Q-surjective-on-affine-subspace}, we can find a transvection $t \in G_1 \subset T^L$ (for some constant $L$) such that $t = 1 + v \otimes v^*$ for
    \[
        v \in \lambda_1 v_1 + \lambda_2 v_2 + K v_3 + \cdots + K v_{10}.
    \]
    Hence we split the given linear relation into two shorter linear relations
    \begin{align*}
        &v + \lambda_1 v_1 + \lambda_2 v_2 + \mu_3 v_3 + \cdots + \mu_{10} v_{10} &= 0, \\
        &v + (\lambda_3 + \mu_3) v_3 + \cdots + (\lambda_{10} + \mu_{10}) v_{10} + \sum_{i>10} \lambda_i v_i &= 0.
    \end{align*}

    Repeating this argument for $\floor{m/10}$ disjoint subsets of $\{t_1,\dots,t_m\}$ of size $10$, we find that we can add $\le m/10$ transvections $t_i = 1 + v_i \otimes v_i^* \in T^{O(1)}$ such that our original linear relation can be written as a sum of linear relations of support $\le 11$ and one of support at most $\ceil{9m/10}$, i.e., $\lambda = \lambda^{(1)} + \cdots + \lambda^{(s)}$ where each $\lambda^{(j)}$ has support size at most $\ceil {9m/10}$ and
    \[
        \sum_{t \in T^L} \lambda_t^{(j)} v_t = 0 \qquad (j=1,\dots,s).
    \]
    Now we repeat this argument for $T^L$ and each of the $\lambda^{(j)}$, etc, $\log n$ times.
    The end result is that $\lambda$ gets written as a sum of some $\mu^{(1)}, \dots, \mu^{(N)}$ where each vector $\mu$ has support size $O(1)$, and
    \[
        \sum_{t \in T^{L^{\log n}}} \mu^{(j)}_t v_t = 0 \qquad (j=1,\dots,N).
    \]
    Note that $L^{\log n} = n^{O(1)}$.
    Finally we use
    \begin{equation}
        \label{eq:tilde-Q}
        \tilde Q(\lambda) = \tilde Q \br{\sum_{j=1}^N \mu^{(j)}} = \sum_{j=1}^N \tilde Q(\mu^{(j)}) + \sum_{1 \le j < j' \le N} \tilde f(\mu^{(j)}, \mu^{(j')}).
    \end{equation}

    We claim that each term on the right-hand side of \eqref{eq:tilde-Q} is zero.
    Indeed, first consider some $\tilde Q(\mu)$, $\mu = \mu^{(j)}$.
    We know that $\mu$ is supported on a set of $O(1)$ transvections $t \in T^{L^{\log n}}$.
    By \Cref{prop:connect-up,prop:winkle}, the union of $T_0$ with the support of $\mu$ is contained in a bounded-cardinality nondegenerate set $T_1 \subset T^{L^{\log n}}$ such that $G_1 = \gen{T_1}$ acts irreducibly on $V_1 = [V, T_1]$.
    Since $T_0 \subset T_1$, $G_1$ acts classically and therefore as an orthogonal group on $V_1$.
    Applying the forward direction of \Cref{prop:orthogonal}, we find that $\tilde Q(\mu) = 0$.
    By the same argument applied to the union of the supports of $\mu$ and $\mu'$, $\mu' = \mu^{(j')}$, we also have $\tilde Q(\mu + \mu') = \tilde Q(\mu) + \tilde Q(\mu') + \tilde f(\mu, \mu') = 0$, and since $\tilde Q(\mu) = \tilde Q(\mu') = 0$ we have $\tilde f(\mu, \mu') = 0$.
    This proves the claim.
    Hence $\tilde Q(\lambda) = 0$ and the proof is complete.
\end{proof}

\subsection{Conclusion}

\begin{theorem}[Certification]
    \label{thm:certification}
    Let $n$ be sufficient large.
    Let $T \subset \tv \subset \SL_n(q)$ be a set of transvections such that $G = \gen T$ is an irreducible classical group. Then there is a subset $T_0 \subset T^{(n \log q)^{O(1)}}$ of cardinality $O(1)$ such that, for any subset $T_1 \subset G \cap \tv$ containing $T_0$ such that $\Gamma(T_1)$ is strongly connected,
    the group $G_1 = \gen{T_1}$ acts on the section $W = [V,G_1] / [V, G_1]^{G_1}$ as an irreducible classical group with the same type and defining field as $G$ itself, and moreover $\dim W > 10$.
\end{theorem}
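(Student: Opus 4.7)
The plan is to assemble $T_0$ from the certificates produced by \Cref{prop:classical-certificate}, \Cref{prop:subfield}, and (when relevant) \Cref{prop:orthogonal-certificate}, and then to verify that each certified property is inherited by any strongly-connected superset $T_1 \supset T_0$ and survives descent to the section $W$.

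First, since density is required in several of the preceding results, I replace $T$ by $T^{2n-1}\cap\tv$, which is dense by \Cref{prop:density}; this costs at most a factor $2n-1$ in word length. I then apply \Cref{prop:classical-certificate} to obtain $T^{cl} \subset T$ of cardinality at most $10^5$ such that $\gen{T^{cl}}$ acts irreducibly and classically on its natural section, of dimension $>10$. I apply \Cref{prop:subfield} to obtain $T^{fld} \subset T^{(\log q)^{O(1)}}$ of cardinality at most $3$ with $L(T^{fld})=\F_q$. If $p=2$ and $G$ is orthogonal, I further apply \Cref{prop:orthogonal-certificate} to produce $T^{orth} \subset T^{(n\log q)^{O(1)}}$ of bounded cardinality. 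Finally I use \Cref{prop:connect-up} and \Cref{prop:winkle} to merge these into a single weakly nondegenerate, strongly connected $T_0 \subset T^{(n\log q)^{O(1)}}$ of bounded cardinality.

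Now let $T_1 \subset G\cap\tv$ contain $T_0$ with $\Gamma(T_1)$ strongly connected, and set $G_1 = \gen{T_1}$ and $W = [V,G_1]/[V,G_1]^{G_1}$. By \Cref{prop:irreducible-section}, $G_1|W$ is irreducible and the natural map $\Gamma(T_1)\to\Gamma(T_1|W)$ is a surjective graph homomorphism preserving cycle weights. Hence every cycle-based certificate in $T_0$ carries over to $T_0|W$: short non-symplectic or non-unitary cycles forbid the wrong invariant forms via \Cref{prop:invariant-form} and \Cref{prop:unitary-symplectic-certificate}; a short cycle with weight outside the intended subfield rules out any proper subfield. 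The admissible-subset witnesses underlying \Cref{prop:sym-type-certificate} and \Cref{prop:monomial-type-certificate} are similarly inherited, using the weak nondegeneracy of $T_0$ to ensure that the relevant subspaces of $V$ inject into $W$, exactly as in the proof of \Cref{prop:classical-certificate}. Combining these observations with the characteristic-$2$ orthogonal/symplectic distinction from \Cref{prop:orthogonal-certificate}, \Cref{thm:classification} forces $G_1|W$ to have the same specific classical type and defining field as $G$, and $\dim W > 10$.

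The main obstacle is organizational rather than substantive: each preceding proposition certifies a pairwise distinction between two possible types, and one must verify case by case that each such certification is \emph{monotone} under enlarging the generating set and \emph{stable} under restriction to $W$. Monotonicity is immediate because each certificate is existential in nature (``there exists a short non-symplectic cycle'', ``there exists an admissible subset of the wrong type'', etc.); stability under passage to $W$ is built into the weight-preservation of \Cref{prop:irreducible-section} together with the weak nondegeneracy arranged in $T_0$.
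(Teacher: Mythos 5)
Your proposal follows the paper's high-level strategy — replace $T$ by a dense set, assemble bounded-size certificates from the preceding propositions, arrange weak nondegeneracy so the certifying subspaces inject into the section $W$, and conclude via the classification — and most ingredients are deployed as in the paper. The one presentational difference (using \Cref{prop:classical-certificate} as a black box rather than gathering the symmetric/monomial/cycle certificates $S_1,S_2,S_3$ directly) is harmless.

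However, the use of \Cref{prop:orthogonal-certificate} is reversed, and this is a genuine gap. That proposition is an equivalence: $G$ is orthogonal iff every bounded nondegenerate subset generates a subgroup preserving an orthogonal structure on its domain. The direction that produces a concrete certificate set is the contrapositive of $(2)\Rightarrow(1)$: when $G$ is \emph{not} orthogonal — concretely, when $G=\Sp_n(q)$ in characteristic $2$ — there exists a bounded nondegenerate $S_4\subset T^{(n\log q)^{O(1)}}$ whose generated subgroup does not preserve an orthogonal form, and the paper puts this $S_4$ into $T_0$ so that no $\bar G_1\supset\gen{\bar S_4}$ can be orthogonal. You instead invoke the proposition when ``$p=2$ and $G$ is orthogonal,'' but the implication $(1)\Rightarrow(2)$ gives nothing to include in $T_0$: it merely asserts that all small nondegenerate subsets are orthogonal, which is automatic since any $G_0\le G$ inherits the form. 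Consequently, in the case $G=\Sp_n(q)$ your $T_0$ contains nothing that rules out orthogonality, and a strongly connected $T_1\supset T_0$ (even one inside $G\cap\tv$) could generate a group whose image on $W$ is a proper orthogonal subgroup — exactly the scenario the certificate is meant to exclude. You need to run the orthogonal certificate in the $G=\Sp_n(q)$ case, not the orthogonal one.
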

\begin{proof}
    In brief, combine \Cref{prop:unitary-symplectic-certificate,prop:sym-type-certificate,prop:monomial-type-certificate,prop:subfield,prop:orthogonal-certificate}.

    We may assume $G$ has defining field $K = \F_q$.
    By \Cref{prop:density} and replacing $T$ with $T^{2n-1} \cap \tv$, we may assume $T$ is dense.
    By \Cref{prop:subfield}, there is a subset $S_0 \subset T^{(\log q)^{O(1)}}$ of cardinality $|S_0| \le 3$ such that $L(S_0) = K$.
    By \Cref{prop:unitary-symplectic-certificate} there is a set $S_1 \subset T$ of cardinality at most $10$ such that $S_1$ contains a non-symplectic cycle if $T$ does and $S_1$ contains a non-unitary cycle if $T$ does.
    By \Cref{prop:sym-type-certificate}, if $q=2$ and $G$ is contained in a symplectic group, there is a subset $S_2 \subset T$ of cardinality $|S_2| \le 2000$ that is not contained in a symmetric-type subgroup.
    By \Cref{prop:monomial-type-certificate}, if $G$ is not contained in a symplectic group, there is a subset $S_3 \subset T$ of cardinality $|S_3|\le 1000$ that is not contained in a monomial subgroup.
    By \Cref{prop:orthogonal-certificate}, if $G = \Sp_n(q)$ there is a subset $S_4 \subset T^{(n \log q)^{O(1)}}$ of cardinality $O(1)$ that is not contained in an orthogonal subgroup.
    Finally, by density of $T$ there is a bounded-cardinality set $T_0 \subset T$ such that for each $i$ (such that $S_i$ is defined) we have $S_i \subset T_0$, moreover the natural maps
    \begin{align*}
        V(S_i) \to V(T_0) / V(T_0) \cap V^*(T_0)^\perp, \\
        V^*(S_i) \to V^*(T_0) / V^*(T_0) \cap V(T_0)^\perp
    \end{align*}
    are injective, and moreover both spaces on the right have dimension at least $2000$.

    Let $T_1 \subset \tv$ be a set containing $T_0$ such that $\Gamma(T_1)$ is strongly connected. By \Cref{prop:irreducible-section}, $G_1 = \gen{T_1}$ acts on $W = [V,G_1] / [V,G_1]^{G_1}$ irreducibly. Let $\bar G_1 = G_1 | W$ and let $\bar T_1$ be the image of $T_1$ in $\bar G_1$.
    By \Cref{prop:irreducible-section}, $\Gamma(T_1)$ and $\Gamma(\bar T_1)$ are naturally identified and the weights of corresponding cycles are equal.
    In particular, $L(\bar T_1) = L(T_1) = K$, so $\bar G_1$ has defining field $K$.
    Also $\Gamma(\bar T_1)$ contains symplectic cycles if and only if $\Gamma(T_1)$ does, and ditto for unitary cycles.
    We also have $\dim W \ge 2000$.
    The symmetric-type and monomial-type certificates ensure that $\bar G_1$ has neither symmetric type nor monomial type.
    Thus $\bar G_1$ must have the same type as $G$.
\end{proof}

\section{Bounding the diameter}

Finally we turn to the proof of our main theorem, \Cref{thm:main}. Let $G$ be one of the groups in scope and let $X$ be a generating set for $G$ containing a transvection $t$.
Let
\[
    T_m = \{t^{x_1 \cdots x_m} : x_1, \dots, x_m \in X\}, \qquad G_m = \gen{T_m}.
\]
Then $G_0 \le G_1 \le \cdots $ is an increasing sequence of subgroups of $G$ such that $G_i = G_{i+1}$ implies $G_i = \gen{t^G} = G$.
Let $m$ be the first index such that $G_m = G$.
Then $2^m \le |G_m| = |G|$, so $m \le \log_2 |G|$, and $\gen {T_m} = G$.
This demonstrates that for the purpose of proving \Cref{thm:main} it is sufficient to assume $X \subset \tv$ (cf.~\cite{GHS}*{Lemma~2.1}).

Thus assume $T \subset \tv$ is a generating set for $G$.
In particular, $\gen T$ is irreducible.
Now we apply \Cref{thm:certification} to obtain a certificate $T_0 \subset T^{(n \log q)^{O(1)}}$ of cardinality $O(1)$.
We may assume $T_0$ is weakly nondegenerate and $\Gamma(T_0)$ is strongly connected.
Let $G_0 = \gen{T_0}$.
Unless $G = \SL(V)$, $T_0$ is in fact nondegenerate and $G_0$ is a classical group with the same type as $G$, defining field $K$, and domain $V_0 = [V,G_0]$ of dimension $\dim V_0 \le |T_0| = O(1)$.
By \Cref{main-thm-bounded-rank}, $\ell_{T_0}(G_0) \le O(\log q)^{O(1)}$.
By replacing $T$ with $T^{(n \log q)^{O(1)}} \cap \tv$ we may thus assume $T_0 \subset G_0 \cap \tv \subset T$.

If $G = \SL(V)$, $T_0$ may only be weakly nondegenerate, but $G_0 | W_0 = \SL(W_0)$ where $W_0 = V_0 / V_0^{G_0}$, $V_0 = [V,G_0]$, and $\dim W_0 > 10$.
%Since $T_0$ is weakly nondegenerate we may assume up to duality that $V_0^{G_0} = 0$.
%Applying \Cref{prop:SL-W=0-case}, $G_0 \cong \SL(V_0) \ltimes V_0^d$, where $d = \dim V^*(T_0) - \dim V(T_0) \le |T_0| = O(1)$.
By \Cref{main-thm-bounded-rank,prop:tv-in-SL-sections} we have $\ell_{T_0}(G_0) \le O(\log q)^{O(1)}$.
Thus again by replacing $T$ with $T^{(n\log q)^{O(1)}} \cap \tv$ we may assume $T_0 \subset G_0 \cap \tv \subset T$.

Let $V = \F_q^n$ be the natural module for $G$.
Then $V$ is a linear, unitary, symplectic, or characteristic-2 orthogonal space.
Let us call a vector $v \in V$ \emph{transvective} if
there is a transvection $t \in G \cap \tv$ such that $[V, t] = \gen v$.
If $V$ is linear or symplectic, all vectors are transvective;
if $V$ is unitary, a vector is transvective if and only if it is singular;
if $V$ is orthogonal, a vector is transvective if and only if it is nonsingular.

\begin{lemma}
    \label{lem:transvective-1}
    Let $v = \sum_{i=1}^k v_i$ for transvective vectors $v_1, \dots, v_k$.
    Then $v - \lambda v_i - \mu v_j$ is transvective for some $i, j \in \{1,\dots,k\}$ and $\lambda, \mu \in K$.
\end{lemma}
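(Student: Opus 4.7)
The plan is to split by the type of $V$. In the linear and symplectic cases every nonzero vector is transvective, so the conclusion is trivial with $\lambda=\mu=0$. The two substantive cases are unitary and (characteristic $2$) orthogonal. In each we may assume $v$ is not itself transvective, else take $\lambda=\mu=0$ and any $i,j$.

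For the unitary case I would set $\mu=0$ and only adjust by a single $v_i$, where the hermitian form satisfies $f(v_i,v_i)=0$ for every transvective $v_i$. A direct expansion gives
\[
    f(v-\lambda v_i,\,v-\lambda v_i) = f(v,v) - \bigl(\lambda f(v_i,v) + \lambda^\theta f(v_i,v)^\theta\bigr),
\]
and the bracketed expression is the relative trace of $\lambda f(v_i,v)$ into $\Fix(\theta)$. Since $f(v,v)=\sum_i f(v_i,v)$ is nonzero by assumption, some $f(v_i,v)$ must be nonzero; for this $i$, the map $\lambda\mapsto \lambda f(v_i,v) + \lambda^\theta f(v_i,v)^\theta$ is $\Fix(\theta)$-linear and surjects onto $\Fix(\theta)$, so a suitable $\lambda$ exists.

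For the orthogonal case I would expand
\[
    Q(v-\lambda v_i-\mu v_j) = Q(v) + \lambda^2 Q(v_i) + \mu^2 Q(v_j) + \lambda f(v,v_i) + \mu f(v,v_j) + \lambda\mu f(v_i,v_j),
\]
where $f$ is the alternating polar form of $Q$. First I would try $\mu=0$, reducing to $Q(v) + \lambda\bigl(\lambda Q(v_i) + f(v,v_i)\bigr)$. If $|K|>2$, any $i$ works: since $Q(v_i)\ne 0$ the inner bracket has at most one root in $\lambda$, so one picks $\lambda\in K\sm\{0,\,f(v,v_i)/Q(v_i)\}$ to make the expression nonzero (recall $Q(v)=0$). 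The delicate subcase is $K=\F_2$, where $Q(v_i)=1$ forces the $\mu=0$ expression to equal $\lambda(1+f(v,v_i))$; this succeeds precisely when some $f(v,v_i)=0$, giving $\lambda=1$. Otherwise $f(v,v_i)=1$ for every $i$, and then $k$ is even because $\sum_i f(v,v_i)=f(v,v)=0$ by the alternating property. Now I would try $\lambda=\mu=1$ with $i\ne j$: the displayed formula collapses to $f(v_i,v_j)$, and some such pairing must equal $1$, since otherwise $\{v_1,\dots,v_k\}$ would span a totally singular subspace for $f$, contradicting $f(v,v_i)=1$.

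The main obstacle is this final $\F_2$ sub-subcase, where the obvious one-index adjustment breaks down and one has to invoke the alternating nature of the polar form to rule out a totally isotropic configuration of the $v_i$; everything else is a direct expansion and an application of surjectivity of a trace or the abundance of roots of a low-degree polynomial over a field of size $>2$.
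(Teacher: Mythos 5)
Your proof is correct and matches the paper's argument case by case: trivial for linear/symplectic, relative-trace surjectivity for the unitary case (taking $\mu=0$), and the same two-index correction in the delicate $q=2$ orthogonal subcase. The only difference is presentational: you reduce to $Q(v)=0$ up front (the paper's counting argument for $q>2$ doesn't need this), and you spell out why some $f(v_i,v_j)\ne 0$ must occur, which the paper asserts without elaboration; the underlying reasoning is the same.
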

\begin{proof}
    Clearly we may assume $V$ is unitary or orthogonal, as otherwise all vectors are transvective.

    \emph{Unitary case.}
    (Cf.~\cite{GHS}*{Lemma~4.19}, which assumes $q$ is odd).
    We will take $\mu = 0$.
    If $v$ is singular then we can take $\lambda = 0$.
    If $v$ is nonsingular then
    \[
        f(v, v) = \sum_{i=1}^k f(v_i, v) \ne 0,
    \]
    so $f(v_i, v) \ne 0$ for some $i$ and
    \[
        f(v - \lambda v_i, v - \lambda v_i) = f(v, v) - \tr(\lambda f(v_i, v)).
    \]
    Since the trace map $\tr : \F_q \to \F_{\sqrt q}$ is surjective and $f(v,v) \in \F_{\sqrt q}$, we can choose $\lambda \in K$ so that $f(v,v) = \tr(\lambda f(v_i, v))$, and it follows that $v - \lambda v_i$ is singular, as required.

    \emph{Orthogonal case.}
    Observe that
    \[
        Q(v - \lambda v_i) = Q(v) - \lambda f(v, v_i) + \lambda^2 Q(v_i).
    \]
    Since $Q(v_i) \ne 0$, there are at most two $\lambda \in K$ for which this is zero, so we are done unless $q=2$, $Q(v) = 0$, and $f(v, v_i) = 1$ for all $i$.
    In this case there must be some $i, j$ such that $f(v_i, v_j) \ne 0$, and
    \[
        Q(v-v_i-v_j) = Q(v_i) + Q(v_j) + f(v,v_i) + f(v,v_j) + f(v_i,v_j) = 1,
    \]
    so we are done.
\end{proof}

\begin{lemma}
    \label{lem:transvective-2}
    Let $b_1, \dots, b_n$ be a basis for $V$ with $b_1, \dots, b_n$ transvective.
    For $v = \sum_{i=1}^n x_i b_i \in V$, let $s(v)$ be the number of nonzero coefficients $x_i$.
    Then any transvective vector $v$ with $s(v) = k$ is the sum of $4$ transvective vectors $v_1, v_2, v_3, v_4$ with $s(v_i) \le k/2 + 2$ for each $i$.
\end{lemma}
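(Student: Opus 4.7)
The plan is to split the support of $v$ roughly in half, apply \Cref{lem:transvective-1} to each half, and arrange that the residual corrections themselves come out transvective.

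Let $S = \{l : x_l \ne 0\}$, so $|S| = k$, and partition $S = A \sqcup B$ with $|A|, |B| \le \lceil k/2 \rceil$. Set $u = \sum_{l \in A} x_l b_l$ and $w = \sum_{l \in B} x_l b_l$, so $v = u + w$ and both $u$ and $w$ are sums of at most $\lceil k/2 \rceil$ transvective vectors. Applying \Cref{lem:transvective-1} to $u$ produces indices $i, j \in A$ and scalars $\lambda, \mu \in K$ such that $u_1 := u - \lambda b_i - \mu b_j$ is transvective. Set $u_2 := \lambda b_i + \mu b_j$ so that $u = u_1 + u_2$, and analogously define $w_1, w_2$ for $w$. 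Then $u_1, u_2$ are supported in $A$ and $w_1, w_2$ are supported in $B$; in particular each has $s$-value at most $\lceil k/2 \rceil \le k/2 + 2$.

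The substantive point is that the residuals $u_2$ and $w_2$ are themselves transvective. In the linear and symplectic cases every nonzero vector is transvective, and one may take $\lambda = \mu = 0$. In the unitary case, the proof of \Cref{lem:transvective-1} allows $\mu = 0$, and then $u_2 = \lambda b_i$ is a scalar multiple of the singular vector $b_i$ and hence singular. In the orthogonal case either $\mu = 0$ and $u_2 = \lambda b_i$ has $Q(u_2) = \lambda^2 Q(b_i) \ne 0$, or we are in the special subcase $q = 2$, $Q(u) = 0$, $f(u, b_l) = 1$ for every $l \in A$, in which case $\lambda = \mu = 1$ and the proof of \Cref{lem:transvective-1} selects $i, j$ with $f(b_i, b_j) \ne 0$; working in $\F_2$ this gives $Q(u_2) = Q(b_i) + Q(b_j) + f(b_i, b_j) = 1 + 1 + 1 = 1$, so $u_2$ is transvective.

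The orthogonal special subcase is the only part that really needs thought: the correction has two terms rather than one, but the very hypotheses triggering this subcase force the two-term correction to itself satisfy $Q = 1$. To present the decomposition as literally four transvective vectors rather than ``at most four'', I would pad with a cancelling pair built from a single transvective basis vector -- $b_1 + b_1 = 0$ in characteristic $2$, or $b_1 + (-b_1) = 0$ in odd characteristic -- each padding vector having $s$-value $1 \le k/2 + 2$.
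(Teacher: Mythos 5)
Your proof is correct, and it takes a genuinely different route from the paper's. The paper \emph{chains} two applications of \Cref{lem:transvective-1}: after the first application produces a transvective $v_1 = u_1 - \lambda b_i - \mu b_j$, it folds the residual $\lambda b_i + \mu b_j$ into the second summand to form $u_2' = u_2 + \lambda b_i + \mu b_j$ (with $s(u_2') \le k/2 + 2$), applies the lemma again to get transvective $v_2$, and leaves only a final residual $\lambda' b_{i'} + \mu' b_{j'}$ which is split into the singletons $v_3, v_4$. This way \Cref{lem:transvective-1} is used purely as a black box: the only residuals the paper ever needs to be transvective are scalar multiples of single basis vectors, which is trivial. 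You instead apply the lemma \emph{independently} to the two halves and then must argue that the two-term residuals $\lambda b_i + \mu b_j$ are themselves transvective; this requires opening up the proof of \Cref{lem:transvective-1} and noting (i) $\mu = 0$ in the unitary case, (ii) $\mu = 0$ also in the orthogonal case outside the $q=2$ degenerate subcase, and (iii) in the $q=2$ degenerate subcase, $\lambda = \mu = 1$ and $f(b_i, b_j) = 1$ force $Q(b_i + b_j) = 1$. You carry out this case check correctly. What your version buys is a slightly tighter support bound ($\lceil k/2 \rceil$ for all four summands instead of the paper's $k/2 + 2$); what it costs is modularity, since the argument depends on details of \Cref{lem:transvective-1}'s proof rather than only its statement. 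Both proofs implicitly treat ``sum of four transvective vectors'' as ``sum of at most four'' (degenerate scalars may produce the zero vector); your explicit padding remark addresses this, which the paper leaves tacit.
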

\begin{proof}
    Clearly we may write $v = u_1 + u_2$ for some (not necessarily transvective) $u_1, u_2 \in V$ such that $s(u_1) \le k/2+1$ and $s(u_2) \le k/2$.
    By \Cref{lem:transvective-1}, there are indices $i,j$ and $\lambda, \mu \in K$ such that $v_1 = u_1 - \lambda b_i - \mu b_j$ is transvective and $s(v_1) \le s(u_1) \le k/2+1$.
    Let $u_2' = u_1 + \lambda b_i + \mu b_j$.
    Applying \Cref{lem:transvective-1} again, there are indices $i', j'$ and $\lambda', \mu' \in K$ such that $v_2 = u_2' - \lambda' b_{i'} - \mu' b_{j'}$ is transvective and $s(v_2) \le s(u_2') \le k/2+2$.
    Let $v_3 = \lambda' b_{i'}$ and $v_4 = \mu' b_{j'}$.
    Then $v = v_1 + v_2 + v_3 + v_4$, by construction $v_1, v_2, v_3, v_4$ are transvective, and $s(v_1) \le k/2+1$, $s(v_2) \le k/2+2$, and $s(v_3), s(v_4) \le 1$.
\end{proof}

\begin{proposition}
    $T^{n^{O(1)}} \supset G \cap \tv$.
\end{proposition}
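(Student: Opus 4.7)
The plan is to prove, by induction on the support size $s(v)$ of a transvective vector $v$ with respect to a fixed basis $b_1, \dots, b_n$ of transvective vectors, that every transvection $t_v \in G \cap \tv$ satisfies $\ell_T(t_v) \le n^{O(1)}$. The inductive engine is \Cref{lem:transvective-2}, which decomposes a transvective $v$ with $s(v) = k$ as $v_1 + v_2 + v_3 + v_4$ with each $s(v_i) \le k/2 + 2$.

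First I would construct the basis. Starting from a transvective basis of $V_0 = [V, G_0]$, whose transvections lie in $G_0 \cap \tv \subset T$, I extend one coordinate at a time. At step $k$, density of $T$ (\Cref{prop:density}) together with the path-shortening results \Cref{prop:connect-up,prop:winkle} supplies a bounded-cardinality augmentation $T_k \supset T_{k-1}$ such that $G_k = \gen{T_k}$ acts irreducibly on a subspace $V_k$ of dimension $\dim V_{k-1} + 1$ with the same type and defining field as $G$. Bounded-rank Babai (\Cref{main-thm-bounded-rank}) applied to $G_k$ expresses the transvection along any transvective vector in $V_k$ as a word of length $(\log q)^{O(1)}$ in $T_k$, which unfolds recursively to length $n^{O(1)}$ in $T$. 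After $n$ steps the basis $b_1, \dots, b_n$ satisfies $\ell_T(t_{b_i}) \le n^{O(1)}$ for all $i$.

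For the induction itself, given transvective $v$ with $s(v) = k$, apply \Cref{lem:transvective-2} to decompose $v = v_1 + v_2 + v_3 + v_4$. The five transvections $t_v, t_{v_1}, \dots, t_{v_4}$ all lie in a bounded-rank classical subgroup $H \le G$ acting on the bounded-dimensional subspace $U$ spanned by $v, v_1, \dots, v_4$ (taking a nondegenerate enlargement if needed; in the $\SL$ case one works instead with the appropriate section, invoking \Cref{prop:tv-in-SL-sections}). To bound the reconstruction cost, I would produce a rich, short-length generating set for $H \cap \tv$ as follows: build $g \in G$ of length $n^{O(1)}$ as a product of basis transvections $t_{b_i}$ mapping $V_0$ onto a bounded-dimensional subspace of $V$ containing $U$, and take $Y = g G_0 g^{-1} \cap \tv \subset T^{n^{O(1)}}$, which has cardinality $\ge q^{\Omega(1)}$. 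Bounded-rank Babai applied to $H$ with generating set $\{t_{v_1}, \dots, t_{v_4}\} \cup Y$ then yields an \emph{absolute-constant} diameter (since $|Y| \ge |H|^{\Omega(1)}$), so $\ell_T(t_v) \le O(1) \cdot \max_i \ell_T(t_{v_i}) + n^{O(1)}$. The recurrence $L(k) \le O(1) \cdot L(k/2+2) + n^{O(1)}$ unrolls to $L(n) \le n^{O(1)}$.

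The main obstacle is precisely keeping the multiplicative overhead per recursion level an absolute constant, rather than the naive $(\log q)^{O(1)}$ that one gets from bounded-rank Babai with an $O(1)$-sized generating set; the latter would yield only a quasi-polynomial bound $n^{O(\log \log q)}$. The $q^{\Omega(1)}$-size generating set obtained via the conjugate $g G_0 g^{-1}$ is what buys the polynomial bound. Verifying that the required $g$ can indeed be built from the basis transvections at each recursion level, and that the conjugate $g G_0 g^{-1}$ retains the correct type and defining field on $g V_0 \supset U$ so that it really contains $H$ (by \Cref{thm:classification}), are the technical points that need the most care.
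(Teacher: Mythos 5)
Your core strategy---inducting on the support size $s(v)$ via \Cref{lem:transvective-2}, and keeping the per-level overhead to an absolute constant by feeding \Cref{main-thm-bounded-rank} a set of $\ge q^{\Omega(1)}$ short transvections inside a bounded-rank subgroup---correctly identifies both of the key ingredients in the paper's proof. But the device you propose for producing that large set, conjugating $G_0$ by an element $g$ with $gV_0 \supset U$ and $\ell_T(g) \le n^{O(1)}$, is a genuine gap: you propose building $g$ as ``a product of basis transvections $t_{b_i}$,'' but the diameter of $G$ with respect to such a set is precisely what this proposition is trying to bound, and there is no a priori reason for a short such $g$ to exist. The paper sidesteps the issue entirely: rather than moving $G_0$ to align with $U = \langle v_1,\dots,v_4\rangle$, it augments $T_0 \cup \{t_{v_1},\dots,t_{v_4}\}$ via \Cref{prop:connect-up,prop:winkle} to a bounded-cardinality set $T_1 \subset T^{(k+2)}$ with $\Gamma(T_1)$ strongly connected, so that $G_1 = \langle T_1\rangle$ is a bounded-rank classical group already containing $G_0$ (because $T_0 \subset T_1$). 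Then $G_0 \cap \tv \subset G_1$ supplies the large generating set in place, and the recurrence becomes the cleaner, purely multiplicative $T^{(2k)} \subset (T^{(k+2)})^{O(1)}$ with no additive $n^{O(1)}$ term.

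Separately, your opening paragraph applies \Cref{main-thm-bounded-rank} to groups $G_k$ of linearly growing rank, which the theorem does not support (the exponent there is $O_{n_k}(1)$, where $n_k$ is the rank); in any case that step is unnecessary, since \Cref{prop:irreducible} already yields a transvective basis $b_1,\dots,b_n$ with $1 + \mu_i b_i \otimes b_i^* \in T$, so each $\ell_T(t_{b_i}) = 1$, and one may freely choose the first $\dim V_0$ of the $b_i$ to span $V_0$.
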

\begin{proof}
    \emph{Case $G \ne \SL_n(q)$}.
    Assume $V$ is a symplectic, unitary, or orthogonal space.
    Let $f$ be the invariant form and for each $v \in V$ let $v^*$ be the dual element associated to $v$ by $f$.
    Recall that a vector $v \in V$ is transvective if and only if there is a transvection $t \in G \cap \tv$ such that $t = 1 + \lambda v \otimes v^*$ for some $\lambda \in K$.
    By \Cref{prop:irreducible}, there is a (transvective) basis $b_1, \dots, b_n$ for $V$ such that $1 + \mu_i b_i \otimes b_i^* \in T$ for some $\mu_i \in K$ for each $i = 1, \dots, n$.
    Let $s : V \to \{0,\dots,n\}$ be the support function defined in \Cref{lem:transvective-2}
    and for each $k \ge 1$ let $T^{(k)}$ be the set of transvections $t = 1 + \lambda v \otimes v^* \in G \cap \tv$ such that $s(v) \le k$.
    It suffices to prove that $T^{(5)} \subset T^{O(1)}$ and $T^{(2k)} \subset (T^{(k+2)})^{O(1)}$ for each $k \ge 1$, for then it follows that
    \[
        G \cap \tv = T_n \subset T^{O(1)^{\ceil {\log_2 n}}} = T^{n^{O(1)}}.
    \]

    Let $t \in T^{(5)}$, say $t = 1 + \lambda  v \otimes v^*$ and $v = \sum_{i=1}^k x_i b_i$, $k \le 5$.
    By \Cref{prop:connect-up,prop:winkle}, $T_0 \cup \{1 + \mu_i b_i \otimes b_i^* : 1 \le i \le k\}$ is contained in a nondegenerate set $T_1 \subset T$ of cardinality $O(1)$ such that $\Gamma(T_1)$ is strongly connected.
    Then $G_1 = \gen{T_1}$ is a classical group with the same type as $G$ and defining field $K$ and domain $V_1 = [V,G_1]$.
    Since $b_1, \dots, b_k \in V_1$ it follows that $v \in V_1$ and $t \in G_1$.
    By \Cref{main-thm-bounded-rank} and the fact that $T$ contains $G_0 \cap \tv$ and $\log |G_0 \cap \tv| \ge \log q$, we have $G_1 \subset T^{O(1)}$.
    Thus $T^{(5)} \subset T^{O(1)}$.

    Similarly, let $k \ge 1$ and let $t \in T^{(2k)}$, say $t = 1 + \lambda v \otimes v^*$ where $s(v) \le 2k$.
    By \Cref{lem:transvective-2}, $v = v_1 + v_2 + v_3 + v_4$ where $s(v_i) \le k + 2$ for each $i$.
    Let $t_i \in T^{(k+2)}$ be a transvection of the form $1 + \lambda_i v_i \otimes v_i^*$ for $i = 1,2,3,4$.
    By \Cref{prop:connect-up,prop:winkle}, $T_0 \cup \{1 + \lambda_i v_i \otimes v_i^* : i = 1,2,3,4\}$ is contained in a nondegenerate set $T_1 \subset T^{(k+2)}$ of cardinality $O(1)$ such that $\Gamma(T_1)$ is strongly connected.
    Then $G_1 = \gen{T_1}$ is a classical group with the same type as $G$ and defining field $K$ and domain $V_1 = [V,G_1]$.
    Since $v_1,v_2,v_3,v_4 \in V_1$, it follows exactly as above using \Cref{main-thm-bounded-rank} that $t \in G_1 \subset (T^{(k+2)})^{O(1)}$.
    Thus $T^{(2k)} \subset (T^{(k+2)})^{O(1)}$, as required.

    \emph{Case $G = \SL_n(q)$.} Now all vectors are transvective.
    Again by \Cref{prop:irreducible} there are elements $1 + b_i \otimes f_i \in T$ ($1 \le i \le n$) such that $b_1, \dots, b_n$ is a basis for $V$, as well as elements $1 + c_i \otimes g_i \in T$ ($1 \le i \le n$) such that $g_1, \dots, g_n$ is a basis for $V^*(T)$.
    Define $s(v)$ exactly as before with respect to the basis $b_1, \dots, b_n$ and $s(\phi)$ analogously for $\phi \in V^*$ with respect to the basis $g_1, \dots, g_n$.
    For each $k \ge 1$ let $T^{(k)}$ be the set of all transvections $t = 1 + v \otimes \phi \in \tv$ such that $s(v) \le k$ and $s(\phi) \le k$.
    It suffices to prove that $T^{(1)} \subset T^{O(1)}$ and $T^{(2k)} \subset (T^{(k)})^{O(1)}$ for each $k \ge 1$.

    Let $t \in T^{(1)}$, so without loss of generality $t = 1 + \lambda b_i \otimes g_j$ for some indices $i,j$ and $\lambda \in K$.
    By \Cref{prop:connect-up,prop:winkle}, $T_0 \cup \{1 + b_i \otimes f_i, 1 + c_j \otimes g_j\}$ is contained in a bounded-cardinality weakly nondegenerate set $T_1 \subset T$ such that $\Gamma(T_1)$ is strongly connected.
    Applying \Cref{prop:tv-in-SL-sections} and \Cref{main-thm-bounded-rank}, $t \in G_1 = \gen{T_1}$ and $G_1 \subset T^{O(1)}$.
    Since $t$ was arbitrary this proves $T^{(1)} \subset T^{O(1)}$.

    Similarly, let $k \ge 1$ and let $t \in T^{(2k)}$, say $t = 1 + v \otimes \phi$ where $s(v) \le 2k$ and $s(\phi) \le 2k$.
    Write $v=v_1+v_2$ and $\phi = \phi_1+\phi_2$ where $s(v_1),s(v_2),s(\phi_1),s(\phi_2) \le k$.
    Let $t_i = 1 + v_i \otimes \phi_i$ for $i = 1,2$ and observe that $t_1, t_2 \in T^{(k)}$.
    By \Cref{prop:connect-up,prop:winkle}, $T_0 \cup \{t_1,t_2\}$ is contained in a weakly nondegenerate set $T_1 \subset T^{(k)}$ of cardinality $O(1)$ such that $\Gamma(T_1)$ is strongly connected.
    Observe that $v = v_1 + v_2 \in V(T_1)$ and $\phi = \phi_1 + \phi_2 \in V^*(T_1)$, so $t \in G_1 = \gen{T_1}$.
    Exactly as before we have $G_1 \subset (T^{(k)})^{O(1)}$, so we have proved that $T^{(2k)} \subset (T^{(k)})^{O(1)}$, as required.
\end{proof}

\begin{proposition}
    $\ell_T(G) \le n^{O(1)}$.
\end{proposition}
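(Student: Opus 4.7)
The plan relies on the previous proposition, which gives $G \cap \tv \subset T^{n^{O(1)}}$. Consequently
\[
    \ell_T(G) \le \ell_{G \cap \tv}(G) \cdot n^{O(1)},
\]
and it suffices to prove $\ell_{G \cap \tv}(G) \le n^{O(1)}$, i.e.\ that every element of $G$ can be expressed as a product of polynomially (in $n$) many transvections. This is a classical Cartan--Dieudonn\'e--type fact which does not use any of the structure developed in the paper; it is purely a statement about the natural representation of the classical groups in question.

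For $G = \SL_n(q)$, Gaussian elimination gives the bound $\ell_{G \cap \tv}(G) = O(n^2)$ at once: any matrix can be reduced to the identity by $O(n^2)$ elementary row operations, each corresponding to multiplication by a transvection. For $G = \Sp_n(q)$ or $G = \SU_n(\sqrt q)$, one argues by induction on the rank, using a bounded number of transvections to send a chosen (singular) vector to a fixed one and thereby reduce to the stabilizer of a hyperbolic pair, which is a classical group of smaller rank of the same type; this yields $O(n)$ transvections per inductive step and hence $O(n^2)$ in total. For $G = \Or_n^\pm(q)$ in characteristic $2$ an analogous inductive argument works, though one must take a little care to stay within the subgroup generated by transvections (rather than use arbitrary reflections, as is more standard).

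The resulting bound is $\ell_T(G) \le n^{O(1)}$ with respect to the current $T$, which, after the various absorptions $T \leftarrow T^{(n \log q)^{O(1)}} \cap \tv$ made earlier in the argument, yields $\ell_T(G) \le (n \log q)^{O(1)}$ for the original $T$, as claimed in \Cref{thm:main}. The only step that requires any real thought is the orthogonal characteristic-$2$ case, where the usual Cartan--Dieudonn\'e argument has to be adapted to stay inside the transvection subgroup; but even this is standard and can alternatively be handled by appealing to \Cref{main-thm-bounded-rank} once the induction has reduced the rank to a bounded value.
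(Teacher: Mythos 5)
Your proof is correct, but it takes a genuinely different route from the paper on the key step of bounding $\ell_{G\cap\tv}(G)$. The paper instead invokes the Liebeck--Shalev theorem (\cite{liebeck-shalev}*{Theorem~1.1}), a character-theoretic result which gives the sharper bound $\ell_{G\cap\tv}(G)\le O(n)$; you supply the more elementary Cartan--Dieudonn\'e/Gaussian-elimination argument yielding $\ell_{G\cap\tv}(G)\le O(n^2)$. Both suffice for the desired $n^{O(1)}$ conclusion. The tradeoff is self-containedness versus sharpness: your route avoids a deep external result at the cost of a quadratic rather than linear bound, and it requires a case-by-case verification (you rightly flag that the orthogonal characteristic-$2$ case needs care, since the standard Cartan--Dieudonn\'e argument must be adapted to the transvection subgroup; your fallback to \Cref{main-thm-bounded-rank} for the base case of the induction is a sound fix). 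The paper's use of Liebeck--Shalev makes the proof shorter and uniform across types, avoiding these case-checks entirely. One small remark: your final paragraph about the accumulated replacements $T\leftarrow T^{(n\log q)^{O(1)}}\cap\tv$ is really part of the surrounding narrative rather than the proof of this proposition; the paper keeps that observation outside the proof environment.
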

\begin{proof}
    By the previous proposition, $\ell_T(G \cap \tv) \le n^{O(1)}$.
    By the Liebeck--Shalev theorem~\cite{liebeck-shalev}*{Theorem~1.1} we have $\ell_{G \cap \tv}(G) \le O(n)$.
    Thus
    \[
        \ell_T(G) \le \ell_T(G \cap \tv) \ \ell_{G \cap \tv}(G) \le n^{O(1)},
    \]
    as claimed.
\end{proof}

This completes the proof of \Cref{thm:main}, since $\ell_X(T) \le (n \log q)^{O(1)}$.

\bibliography{refs}
\end{document}